\def\Bbb{\mathbb}
\def\Cal{\mathcal}
\def\Dt{\partial_t}
\def\eb{\varepsilon}
\def\R {\mathbb{R}}
\def\N {\mathbb{N}}
\def\Pr{\operatorname{Pr}}
\def\E {{\mathcal E}}
\def\<{\left<}
\def\>{\right>}
\def\Nx{\nabla_x}
\def\Dx{\Delta_x}
\def\({\left(}
\def\){\right)}
\def\divv{\operatorname{div}}
\newtheorem{proposition}{Proposition}[section]
\newtheorem{theorem}[proposition]{Theorem}
\newtheorem{corollary}[proposition]{Corollary}
\newtheorem{lemma}[proposition]{Lemma}
\theoremstyle{definition}
\newtheorem{definition}[proposition]{Definition}
\newtheorem{remark}[proposition]{Remark}
\newtheorem{example}[proposition]{Example}
\numberwithin{equation}{section}
\def \no#1#2#3 {{\bf #1} (#3), #2.}
\def \eds#1#2#3 {#1, #2, #3.}
\title[]
{Entropy estimates for  uniform attractors of dissipative PDEs with non translation-compact external forces}
\author[Y. Xiong, A. Kostianko,  C. Sun and S. Zelik] {Yangmin Xiong${}^{1}$, Anna Kostianko${}^{1,2}$, Chunyou Sun${}^1$, and Sergey Zelik${}^{1,3,4}$}
\begin{document}
\begin{abstract} We study the Kolmogorov's entropy of uniform attractors for non-autonomous dissipative PDEs. The main attention is payed to the case where the external forces are not translation-compact. We present a new general scheme which allows us to give the upper bounds of this entropy for various classes of external forces through the entropy of proper projections of their hulls to the space of translation-compact functions. This result generalizes well known estimates of Vishik and Chepyzhov for the translation-compact case. The obtained results are applied to three model problems: sub-quintic 3D damped wave equation with Dirichlet boundary conditions,   quintic 3D wave equation with periodic boundary conditions and 2D Navier-Stokes system in a bounded domain. The examples of finite-dimensional uniform attractors for some special external forces which are not translation-compact are also given.
\end{abstract}

\address{${}^1$ School of Mathematics and Statistics, Lanzhou University, Lanzhou  \\ 730000,
P.R. China}
\email{xiongym18@lzu.edu.cn (Y.Xiong), sunchy@lzu.edu.cn (C.Sun)}
\address{${}^2$ Imperial College, London SW7 2AZ, United Kingdom.}
\email{a.kostianko@imperial.ac.uk (A.Kostianko)}
\address{${}^3$ University of Surrey, Department of Mathematics, Guildford, GU2 7XH, United Kingdom.}
\email{s.zelik@surrey.ac.uk (S.Zelik)}

\address{${}^4$ Keldysh Institute of Applied Mathematics, Moscow, Russia}

\keywords{non-autonomous PDEs, uniform attractors, non translation compact external forces, Kolmogorov's entropy }
\subjclass[2000]{35B40, 35B45, 35L70}
\thanks{  This work is partially supported by the grant 19-71-30004 of RSF (Russia),
  the Leverhulme grant No. RPG-2021-072 (United Kingdom) and NSFC grant No. 11522109 and 11871169 (China).}
\maketitle

\tableofcontents
\section{Introduction}\label{sec1}
It is well-known that in many cases the long-time behaviour of
dissipative partial differential equations (PDEs) can be described in terms of the so-called
global attractors. Being a compact invariant subset of a phase space attracting
the images of all bounded sets when time tends to infinity, a global attractor contains
all the non-trivial dynamics of the system considered. On the other hand, it is
usually essentially smaller than the initial phase space. In particular, in many
cases this attractor has finite Hausdorff and fractal dimensions, so despite the
infinite-dimensionality of the initial phase space (e.g., $E= L^2(\Omega)$), the limit-reduced
dynamics on the attractor is in a sense finite-dimensional and can be described by
finitely many parameters, see \cite{BV, Temam, CV} and references therein.
\par
The situation becomes more complicated when the considered dissipative PDE
is non-auto\-no\-mous, for instance, contains the external forces depending explicitly
on time, i.e., the underlying PDE may have the form
\begin{equation}\label{1.1}
\Dt u = A(u) + g(t), \  u\big|_{t=\tau}=u_\tau\in E,
\end{equation}
where $A(u)$ is some non-linear operator which we will not specify here, see the
examples in Sections \ref{s4}, \ref{s.5}, \ref{s6} below, $E$ is a phase space of the problem considered (we
will assume below that $E$ is a reflexive Banach space) and the external forces $g$
are assumed to belong to the space $L^p_b (\R; H )$ with $1 < p < \infty$, where $H$ is another
reflexive Banach space or the space $M_b(\R,H)$ of $H$-valued measures, see Section \ref{s.2n} for more detailed exposition.
\par
At present time there exist two major ways to extend the concept of a global
attractor to the case of non-autonomous PDEs. The first one treats the attractor
of a non-autonomous system as a time-dependent set as well $\Cal A = \Cal A(t)\subset E$, $t\in\R$.
This naturally leads to the so-called pullback attractors or the kernel sections in the
terminology of Vishik and Chepyzhov, see \cite{4, CV, 8,9} and references therein. One of the
main advantages of this approach is the fact that the attractor $\Cal A(t)$ usually remains
finite-dimensional for every $t$ and it is also well adapted to study random/stochastic
PDEs, see \cite{9}. However, in the case of deterministic PDEs, this approach has an
essential drawback, namely, the attraction forward in time is usually lost and we
have only a weaker form of the attraction property pullback in time. As a result,
an exponentially repelling forward in time trajectory may be a pullback attractor,
see \cite{MirZel} and references therein. We also mention that this problem can be overcome
using the concept of the so-called non-autonomous exponential attractor, see \cite{EMZPRSE}.
 \par
  The alternative approach is based on the reduction of the non-autonomous
dynamical system (DS) to the autonomous one and  treats the attractor of a nonautonomous DS as a time-independent set $\Cal A \subset E$. This approach naturally leads
to the so-called uniform attractor which is the main object of investigation of the
present paper, so we explain it in a bit more detailed way. Following the general
scheme, one should consider the family of equations of the form \eqref{1.1}:
\begin{equation}\label{1.2}
\Dt u = A(u) + h(t),\ \  u\big|_{t=\tau}=u_\tau,\  h\in\Cal H(g)
\end{equation}
with all external forces belonging to the so-called hull $\Cal H(g)$ of the initial external
force $g$ generated by all time shifts of the initial external force $g$ and their closure
in the properly chosen topology, see Sections \ref{s.2n} and \ref{s3.m} for more details.
\par
Then, assuming that the problems \eqref{1.2} are globally well-posed in $E$, we have
a family of dynamical processes $U_h(t, \tau ) : E \to E$, $h\in \Cal H(g)$, in the phase space
$E$ generated by the solution operators $U_h(t,\tau )u_\tau:= u(t)$ of \eqref{1.2}. Introduce an
extended phase space $\Bbb E := E\times\Cal H(g)$ associated with problem \eqref{1.2}. Then, the
extended semigroup on $\Bbb E$ is defined as follows:
\begin{equation}\label{1.3}
\Bbb S(t)(u_0, h) := (U_h(t,0)u_0, T(t)h),\ \  u_0\in  E,\ \  h\in  \Cal H(g),\ \  (T(s)h)(t) := h(t+s).
\end{equation}
Finally, if this semigroup possesses a global attractor $\Bbb A\subset\Bbb E$, then its projection
$\Cal A := \Pi_1\Bbb A$ is called a uniform attractor associated with the family of equations
\eqref{1.2}, see \cite{5, CV, BV, 7} as well as Section \ref{s3.m} for more details.
\par
The choice of the topology on the extended phase space is crucial for
this approach. According to Vishik and Chepyzhov, see \cite{CV}, there are two natural
choices of this topology. The first one is the {\it weak} topology which leads to the
so-called weak uniform attractor. In this case, we take the topology induced by the
embedding $\Cal H(g)\subset L^p_{loc,w}(\R; H )$ on the hull $\Cal H(g)$ (this gives the compactness of the hull since the space $L^p_{loc}(\R; H )$ is reflexive and bounded sets are precompact in
a weak topology, see e.g. \cite{22}) or the local weak star topology if the space of measures $M_b(\R,H)$ is considered  combined with  the {\it weak} topology on the phase space $E$. In this case
we do not need extra assumptions on the external forces $g$ and only the translation
boundedness: $g\in L^p_b (\R; H)$ (resp. $g\in M_b(\R,H)$) is usually sufficient to have a weak uniform attractor $\Cal A$.
\par
The second natural choice is the choice of {\it strong} topologies on both components
$E$ and $\Cal H(g)$. In this case, we need the extra assumption that the hull $\Cal H(g)$ is compact
in a {\it strong} topology of $L^p_{loc}(\R; H )$ or $M_{loc}(\R,H)$ (the external forces satisfying this condition are usually referred as translation-compact). Thus, this alternative choice requires
the extra assumption for the external forces to be translation-compact and gives
the so-called strong uniform attractor $\Cal A\subset E$, see \cite{5,CV} for many applications of
this theory for various equations of mathematical physics.
\par
However, as has been pointed out later, there is one more a bit surprising choice
of the topologies when one takes the {\it strong} topology on the $E$-component of the
phase space $\Bbb E$ and the {\it weak or weak star} topology on the hull $\Cal H(g)$. Then, it is possible in many
cases to verify the existence of a strong uniform attractor $\Cal A\subset E$ for the case when
the external forces $g$ are not translation compact. Of course, to gain this strong
compactness, we need some extra assumptions on $g$, but these assumptions can be
essentially weaker than the translation compactness. For instance, in
the case of parabolic PDEs in bounded domains, one can consider the so-called normal or weakly normal external forces,  see  \cite{1, LWZ, 13,14,15} and Section \ref{s.2n}
for more details. These classes of external forces are not sufficient for hyperbolic equations and should be replaced, e.g. by space or time regular ones, see \cite{ZelDCDS,16,17} and references therein for more details. Note also that it looks natural to consider external forces which are Borel measures in time  ($g\in M_b(\R,H)$) for the case of damped hyperbolic equations, see \cite{SZUMN} and Section \ref{s.2n} for more details.
 \par
 We emphasize that, in contrast to the autonomous case or the case of pullback attractors, the fractal dimension of a uniform attractor is typically infinite for more or less general time-dependent external forces (although there are important exceptions, see \cite{CV,7,MirZel} and Section \ref{s7} below), so it looks natural (following Vishik and Chepyzhov \cite{CV}) to use Kolmogorov's $\eb$-entropy in order to control the size of such attractors. Recall that, by definition, for any $\eb>0$, the Kolmogorov's $\eb$-entropy of a compact set $\Cal A$ in a metric space $E$ is the logarithm of the minimal number $N_\eb(\Cal A,E)$ of $\eb$-balls in $E$ which is sufficient to cover the set $\Cal A$:
 $$
 \Bbb H_\eb(\Cal A,E):=\log_2N_\eb(\Cal A,E).
 $$
 Roughly speaking, we have $\Bbb H_\eb(\Cal A,E)\sim d_f(\Cal A)\log_2\frac1\eb$ as $\eb\to0$ if the set $\Cal A$ has a finite fractal dimension $d_f(\Cal A)$ and the asymptotic is just different in the case where the attractor is infinite-dimensional. Thus, the problem of finding this asymptotic in terms of the physical parameters of the system considered and the properties of the external forces $g$ arises.
\par
This problem looks completely understood in the case where the external forces are trans\-la\-tion-compact. Indeed, in this case, an optimal estimate in the form
\begin{equation}\label{0.ennttrr}
\Bbb H_\eb(\Cal A,E)\le C\log_2\frac{R_0}\eb+\Bbb H_{\frac{\eb}K}\(\Cal H(g)\big|_{[0,L\log_2\frac{R_0}\eb]}, L^p_b([0,L\log_2\frac{R_0}\eb],H)\)
\end{equation}
is available under some natural assumptions on the system considered. Here the constants $C$, $K$, $R_0$ and $L$ are independent of $\eb\to0$, see \cite{CV} for more details (see also \cite{MirZel,ZelNach,ZeDCDS} and references therein for the extension of this universal formula to the case of locally compact attractors which correspond to dissipative PDEs in unbounded domains).
\par
The main aim of the present paper is to extend entropy estimate \eqref{0.ennttrr} to the case where the external forces are not translation-compact. Note that the second term in the right-hand side is infinite in this case, so \eqref{0.ennttrr} makes no sense and must be corrected. We suggest to modify this universal estimate as follows:
\begin{equation}\label{0.mennttrr}
\Bbb H_\eb(\Cal A,E)\le C\log_2\frac{R_0}\eb+\Bbb H_{\frac{\eb}K}\(\Pr(\eb)\Cal H(g)\big|_{[0,L\log_2\frac{R_0}\eb]}, L^p_b([0,L\log_2\frac{R_0}\eb],H)\),
\end{equation}
where the family of projections $\Pr(\eb):L^p_b(\R,H)\to L^p_{tr-c}(\R,H)$ commutes with the group $T(h)$ of time shifts and makes the projected hull $\Pr(\eb)\Cal H(g)$ translation-compact, see Section 3 for more details. For a general theory we do not require the "projectors" $\Pr(\eb)$ even to be linear, but in applications they are just a composition mollifying operators in space and time whose choice depend on the class of external forces considered. Note also that in the translation-compact case, we may take $\Pr(\eb)=\operatorname{Id}$ and return to the universal formula \eqref{0.ennttrr}. We also mention that an alternative approach to study the Kolmogorov's entropy  of uniform attractors  which is based on the proper metrization of  the weak topology on the hull of the external  forces has been suggested in a very recent paper \cite{XS}.
\par
The paper is organized as follows.
\par
 In Section \ref{s.2n}, we briefly discuss various classes of non translation-compact external forces which give the compactness of the corresponding uniform attractor in a strong topology. An essential attention is payed to spaces of $H$-valued Borel measures of locally finite total variations and their uniformly local analogues $g\in M_b(\R,H)$ as well as their important subspaces. In particular, we introduce here a new class of the so-called {\it weakly regular} external forces/measures which  includes both space and time regular external forces and still gives the strong compactness of attractors of damped wave equations, see Definition \ref{Def0.wreg}.
\par
In Section \ref{s3.m}, we present an abstract scheme of estimating the Kolmogorov's $\eb$-entropy for dissipative PDEs with non translation compact external forces. In particular, we state here the sufficient conditions for the non-autonomous dynamical system considered which guarantee the validity of the modified entropy estimate \eqref{0.mennttrr}. These conditions are traditionally formulated in the form of some asymptotic smoothing properties for the differences of two trajectories of the dissipative PDE considered, see Theorem \ref{Th1.main-ent} and estimate \eqref{1.squeez} below.
\par
In Section \ref{s4}, we apply the obtained result to the damped wave equation
$$
\Dt^2u+\gamma\Dt u-\Dx u+u+f(u)=g(t)
$$
in a bounded smooth domain $\Omega$ of $\R^3$ endowed with Dirichlet boundary conditions and with the non-linearity of sub-quintic growth rate. We proved that under some natural extra assumptions on the non-linearity $f$ and under the assumption that $g$ is a weakly regular $L^2(\Omega)$-valued measure, the uniform attractor $\Cal A$ of the above damped wave equation possesses a modified entropy estimate \eqref{0.mennttrr} in the standard energy phase space, see Theorem \ref{Th2.main-ent} below.
\par
In Section \ref{s.5}, we extend the result of Section \ref{s4} to the nonlinearity $f$ of critical quintic growth rate and periodic boundary conditions. Note that the choice of periodic boundary conditions is somehow unavoidable here since the key energy-to-Strichartz estimate which is crucial for the dissipativity 
is still  unknown for the case of Dirichlet boundary conditions. By this reason, even the existence of a global attractor in the critical case and Dirichlet boundary conditions is known for the autonomous case only, see \cite{11,SZUMN,MSSZ} and also Section \ref{s.5} below for more details. Note also that the results of these two sections look new even for the case of translation-compact external forces. Indeed, the analogous result for the translation compact case has been proved in \cite{CV} for the case of nonlinearities $f$ of cubic and sub-cubic growth rates only.
\par
In Section \ref{s6}, we apply the obtained results to another classical example of a dissipative PDE, namely, for 2D Navier-Stokes equations in a bounded domain endowed with Dirichlet boundary conditions. Here we assume that the external forces $g\in L^2_b(\R,H^{-1}_\sigma(\Omega))$, where the symbol $"\sigma"$ stands for the solenoidal vector fields, and prove that in the case where these external forces are weakly normal, the corresponding uniform attractor $\Cal A$  possesses a modified universal entropy estimate \eqref{0.mennttrr} in the standard energy phase space.
\par
Finally, in Section \ref{s7}, we present two examples of non translation-compact external forces for which the modified universal entropy estimate gives the finiteness of the fractal dimension of the corresponding uniform attractor. We also discuss some related open problems here.

\section{Preliminaries. Classes of external forces}\label{s.2n}
In this section, we briefly recall the known facts about admissible time-dependent external forces and state some results which will be used in the sequel, see \cite{CV,ZelDCDS} for more detailed exposition.
\par
Let $H$ be a reflexive Banach space and let us consider functions $g:\R\to H$.
\begin{definition}\label{Def0.tr-b} Let $1\le p\le\infty$. Then the function $g\in L^p_{loc}(\R,H)$ is  translation bounded if
$$
\|g\|_{L^p_b(\R,H)}:=\sup_{t\in\R}\|g\|_{L^p(t,t+1,H)}<\infty.
$$
The subspace of translation-bounded functions in $L^p_{loc}$ is denoted by $L^p_b(\R,H)$. The Sobolev spaces $W^{s,p}_b(\R,H)$ are defined analogously.
\end{definition}
As known, see e.g. \cite{CV}, it is natural for the attractor theory of non-autonomous equations to consider not only a single external force $g$, but also a family of external forces generated by all time shifts of $g$ as well as their limits in a proper topology. This leads to the following definition.
\begin{definition}\label{Def0.hull} Let $1<p<\infty$ and $g\in L^p_b(\R,H)$. Then the (weak) hull $\Cal H(g)$ is defined by
\begin{equation}\label{0.hull}
\Cal H(g):=[T(h)g]_{L^{p,w}_{loc}(\R,H)}, \ \ (T(h)g)(t):=g(t+h),\ \ t,h\in\R.
\end{equation}
Here and below $[\cdot]_V$ stands for the closure in the topology of $V$ and the symbol "$w$" stands for the weak topology.
\end{definition}
Note that due to the Banach-Alaoglu theorem we know that $\Cal H(g)$ is {\it compact} in $L^{p,w}_{loc}(\R,H)$. It is also not difficult to see that
$$
\|\nu\|_{L^p_b(\R,H)}\le\|g\|_{L^p_b(\R,H)},\ \ \forall\nu\in\Cal H(g).
$$
Since the compactness is crucial for the attractors theory, we will always endow the hull $\Cal H(g)$
by this weak topology.
\par
Let us now consider the limit cases $p=\infty$ and $p=1$. In these cases, $L^p(t,t+1,H)$ is no more reflexive, but using that $L^\infty(t,t+1,H)=(L^1(t,t+1,H^*))^*$ (the space $H$ is assumed reflexive), we are still able to restore the compactness simply by replacing the weak topology by the weak star one. However, this does not work in the case $p=1$ (which is typical for non-autonomous wave equations) and we have to consider {\it measure-valued} external forces in order to restore the compactness.
\par
Namely, following \cite{SZUMN}, we introduce the space of $H$-valued Borel measures $M_{loc}(\R,H)$ with locally finite variations and the space of translation-bounded measures $M_b(\R,H)$  analogously to Definition \ref{Def0.tr-b}, i.e.
$$
\|\mu\|_{M_b(\R,H)}:=\sup_{t\in\R}\|\mu\|_{M(t,t+1;H)}.
$$
We also recall that the norm on $M(t,t+1;H)$ may be defined as follows:
$$
\|\mu\|_{M(t,t+1;H)}=\sup_{\psi\in C(t,t+1;H^*)}\frac{|\int_{[t,t+1]}(\psi(s),\mu(ds))|}{\|\psi\|_{C(t,t+1;H^*)}}.
$$
Moreover, using the duality $M(t,t+1,H)=(C(t,t+1,H^*))^*$ again, we may endow the uniformly local space $M_b(\R,H)$ with the weak-star local topology of $M_{loc}^{w^*}(\R,H)$ and in this space we will have the Banach-Alaoglu theorem. This allows us to define the hull $\Cal H(\mu)$ of the initial measure $\mu\in M_b(\R,H)$ using the weak-star topology and endow it with the topology of $M_{loc}^{w^*}(\R,H)$. Thus, the hull constructed will be a compact set. Moreover, the space $L^1_b(\R,H)$ is then naturally treated as a closed subspace of $M_b(\R,H)$ which consists of measures which are absolutely continuous with respect to the Lebesgue measure. For such measures $\mu_g(dt)=g(t)\,dt$ we have
$$
\|\mu_g\|_{M_b(\R,H)}=\|g\|_{L^1_b(\R,H)},
$$
so the constructed embedding is isometric. We also mention a useful fact that the space $L^1_b(\R,H)$ is {\it dense} in $M_b(\R,H)$ endowed with the local weak-star topology and that the definition of the hull of $g\in L^p_b(\R,H)$ given above is consistent with the definition of the hull $\Cal H(\mu)$ of the corresponding measure $\mu(dt)=g(t)\,dt$, see \cite{SZUMN} for more details.
\par
The class $M_b(\R,H)$ of translation-compact measures is still sometimes not convenient since it contains Dirac measures which lead to discontinuities of the corresponding trajectories of the dynamical system considered in time, so it is convenient to exclude such measures and this leads to the following definition.

\begin{definition} A measure $\mu\in M_b(\R,H)$ is called (weakly) uniformly non-atomic if, for every $\psi\in H^*$, there exists a monotone increasing function $\omega_\psi:\R_+\to\R_+$ such that $\lim_{\eb\to0}\omega(\eb)=0$
and
\begin{equation}\label{0.wna}
\sup_{t\in\R}|\int_{[t,t+\eb]}(\mu(ds),\psi)|\le\omega(\eb).
\end{equation}
The closed subset of $M_b(\R,H)$ which consists of weakly uniformly non-atomic measures is denoted by $M_b^{una}(\R,H)$. Obviously, $\Cal H(\mu)\subset M_b^{una}(\R,H)$ if $\mu\in M_b^{una}(\R,H)$ and all measures $\nu\in \Cal H(\mu)$ satisfy \eqref{0.wna} with the same $\omega_\psi$ as the initial measure $\mu$.
\end{definition}
\begin{remark} We emphasize that even in the case $\mu\in M_b^{una}(\R,H)\cap L^1_b(\R,H)$, we cannot guarantee that all measures $\nu\in \Cal H(\mu)$ will be absolutely continuous with respect to the Lebesgue measure ($\Cal H(\mu)$ is not a subspace of $L^1_b(\R,H)$ in general). Indeed, according to the Dunford-Pettis theorem, we need to assume that \eqref{0.wna} holds not only for intervals $[t,t+\eb]$, but for all sets of measure $\le\eb$. In other words, the uniform non-atomicity only guarantees that the discrete component of $\nu\in\Cal H(\mu)$ vanishes, but the singular component (Cantor-type measure) may be not zero.
\end{remark}
We now turn to the classes of external forces which may guarantee the existence of a uniform attractor in a strong topology of the phase space. The most natural and most studied  is the class of {\it translation-compact} external forces introduced by Vishik and Chepyzhov, see \cite{CV}. We recall that $g\in L^p_b(\R,H)$ is translation-compact if its hull $\Cal H(g)$ is a compact set in the {\it strong} local topology $L^p_{loc}(\R,H)$ (the definition for $\mu\in M_b(\R,H)$ is analogous). On the one hand, the translation-compactness is usually not difficult to verify using the variations of the Arzel\`{a}-Ascoli theorem and, on the other hand, this assumption is sufficient to get the attraction in strong topology for very wide class of dissipative PDEs, see \cite{CV} for more details.
\par
However, the translation compactness is far from being necessary for this and can be essentially relaxed depending on the class of PDEs considered. We start with the so-called time and space regular external forces (introduced in \cite{ZelDCDS}). Since they are somehow natural for the class of hyperbolic equations, we will give the corresponding definitions using the space of measures $M_b(\R,H)$.

\begin{definition} A measure $\mu\in M_b(\R,H)$ is {\it time-regular} if it can be approximated by smooth in time functions in the metric of $M_b(\R,H)$:
$$
\mu\in [W^{s,2}_b(\R,H)]_{M_b(\R,H)}:=M_b^{t-reg}(\R,H),\ \ s>0.
$$
As shown in \cite{ZelDCDS}, this definition is independent of $s>0$. Moreover,  any time-regular measure is uniformly non-atomic, so $M_b^{t-reg}\subset M_b^{una}$.
\end{definition}
As shown in \cite{SZUMN}, $\mu\in M_b^{t-reg}$ if and only if there exists a modulus of continuity $\omega:\R_+\to\R_+$, $\lim_{\eb\to0}\omega(\eb)=0$ such that
\begin{equation}\label{0.t-crit}
\|\mu-T(h)\mu\|_{M_b(\R,H)}\le \omega(|h|).
\end{equation}
In addition, if $\nu\in\Cal H(\mu)$ then \eqref{0.t-crit} is satisfied for $\nu$ with the same modulus of continuity. The next simple result also proved in \cite{ZelDCDS} is, however, very useful.
\begin{proposition}\label{Prop0.t-reg} Let $\mu\in M_b^{t-reg}(\R,H)$ and let $\mu_n\in W^{s,2}_b(\R,H)$ be the approximating sequence, i.e. $\|\mu-\mu_n\|_{M_b}\le \eb_n\to0$. Then, for every $\nu\in\Cal H(\mu)$, there exists $\nu_n\in\Cal H(\mu_n)\subset W^{s,2}_b(\R,H)$ such that
$$
\|\nu-\nu_n\|_{M_b}\le\eb_n.
$$
\end{proposition}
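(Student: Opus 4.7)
The plan is to combine three ingredients: the translation-invariance of the $M_b$-norm, Banach--Alaoglu compactness of the hull in the local weak-star topology of $M_{loc}^{w^*}(\R,H)$, and lower semicontinuity of the $M_b$-norm under this weak-star convergence. First, unwind the definition of the hull: $\nu\in\Cal H(\mu)$ furnishes a sequence of real numbers $h_k$ such that $T(h_k)\mu\to\nu$ in $M_{loc}^{w^*}(\R,H)$. The quantity $\|\mu-\mu_n\|_{M_b}$ will be transported along these shifts by invariance, and the weak-star limit of the shifted smooth approximants will serve as $\nu_n$.

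Fix $n$ and apply the same shifts to $\mu_n$. Since $\|T(h_k)\mu_n\|_{M_b}=\|\mu_n\|_{M_b}$, the sequence $T(h_k)\mu_n$ is uniformly bounded in $M_b(\R,H)$, so by Banach--Alaoglu a subsequence (still indexed by $k$) converges weak-star to some $\nu_n$, which by construction belongs to $\Cal H(\mu_n)$. To certify $\nu_n\in W^{s,2}_b(\R,H)$, observe that the same shifts live in a fixed ball of the reflexive local Sobolev space $W^{s,2}_{loc}(\R,H)$; a further weak extraction there produces the same limit $\nu_n$ by uniqueness of distributional limits, placing $\nu_n$ in $W^{s,2}_b(\R,H)$. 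Now set $\eta_k:=T(h_k)(\mu-\mu_n)=T(h_k)\mu-T(h_k)\mu_n$. Translation-invariance of the uniformly local norm gives
\[
\|\eta_k\|_{M_b(\R,H)}=\|\mu-\mu_n\|_{M_b(\R,H)}\le\eb_n
\]
for every $k$, while the two weak-star convergences above imply $\eta_k\to\nu-\nu_n$ in $M_{loc}^{w^*}(\R,H)$. Passing to the limit, using that for each fixed $t$ the norm $\|\cdot\|_{M(t,t+1;H)}$ is dual to $C(t,t+1;H^*)$ (hence weak-star lower semicontinuous on each interval) and that a supremum of lower semicontinuous functions remains lower semicontinuous, one obtains
\[
\|\nu-\nu_n\|_{M_b(\R,H)}\le\liminf_{k\to\infty}\|\eta_k\|_{M_b(\R,H)}\le\eb_n,
\]
which is precisely the desired estimate.

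The main obstacle I expect is the last lower semicontinuity step. Local weak-star convergence in $M_{loc}^{w^*}$ as used in the definition of the hull a priori only gives convergence of integrals against test functions compactly supported in open intervals, whereas the dual characterization of $\|\cdot\|_{M(t,t+1;H)}$ allows test functions in $C(t,t+1;H^*)$ that need not vanish at the endpoints. I would resolve this by slightly enlarging the interval and approximating a given $\psi\in C(t,t+1;H^*)$ by compactly supported smooth test functions, using the uniform translation-bounded estimate on $\eta_k$ to absorb the boundary error; this is routine but is the only place where care is needed beyond the formal manipulation above.
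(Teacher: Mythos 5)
The paper does not actually prove this proposition; it quotes it from \cite{ZelDCDS}. Judged on its own, your architecture is the natural (and, I believe, the intended) one: realize $\nu$ as a limit of shifts $T(h_k)\mu$, apply the same shifts to $\mu_n$, extract a weak-star limit $\nu_n\in\Cal H(\mu_n)$ (and a compatible weak limit in $W^{s,2}$ on bounded intervals to certify $\nu_n\in W^{s,2}_b$), use translation invariance to get $\|T(h_k)(\mu-\mu_n)\|_{M_b}\le\eb_n$, and pass to the limit. All of that is fine (modulo the usual caveat that the closure defining the hull may require nets unless $H^*$ is separable, which does not affect the argument).

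The one step you defer to a "routine" repair is, however, exactly where the argument can fail, and your proposed mechanism for the repair is not sufficient. The norm $\|\eta\|_{M_b}=\sup_t\|\eta\|_{M(t,t+1;H)}$ is a supremum of variations over \emph{closed} unit intervals, and this functional is \emph{not} lower semicontinuous under local weak-star convergence on bounded sets of $M_b(\R,H)$: take $\eta_k=v(\delta_{-1/k}+\delta_{1+1/k})$ for a fixed unit vector $v\in H$. Then $\|\eta_k\|_{M_b}=1$ for every $k$, since no interval of length one contains both atoms, yet $\eta_k\to v(\delta_0+\delta_1)$ in $M_{loc}^{w^*}$ and the limit has $M_b$-norm equal to $2$. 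Thus mass lying just outside $[t,t+1]$ for each $k$ can coalesce onto the endpoints in the limit, and the "uniform translation-bounded estimate on $\eta_k$" cannot absorb the boundary error. What saves you is a hypothesis you have not used: since $\mu$ is time-regular (hence $\eps$-close in $M_b$ to an $L^2_b$ function for every $\eps>0$) and $\mu_n\in W^{s,2}_b\subset L^2_b$, the Cauchy--Schwarz inequality on short intervals gives $\omega(\delta):=\sup_{s\in\R}\|\mu-\mu_n\|_{M(s,s+\delta;H)}\to0$ as $\delta\to0$; by shift invariance the same bound holds for every $\eta_k$ uniformly in $k$. Then for fixed $t$ and $\delta>0$, testing against functions supported in the open interval $(t-\delta,t+1+\delta)$ and splitting it as $(t-\delta,t+1-\delta]\cup(t+1-\delta,t+1+\delta)$ yields
$$
\|\nu-\nu_n\|_{M((t-\delta,t+1+\delta);H)}\le\liminf_{k\to\infty}\Big(\|\eta_k\|_{M(t-\delta,t+1-\delta;H)}+\|\eta_k\|_{M(t+1-\delta,t+1+\delta;H)}\Big)\le\eb_n+\omega(2\delta),
$$
and letting $\delta\to0$ (continuity from above of the variation of the fixed limit measure) recovers $\|\nu-\nu_n\|_{M(t,t+1;H)}\le\eb_n$. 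With this correction the proof is complete; without it, the final inequality is unjustified.
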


This fact, together with the standard estimate
$$
\|\Cal H(\mu_n)\|_{W^{s,2}_b}\le\|\mu_n\|_{W^{s,2}_b}
$$
show that the hull $\Cal H(\mu)$ can be {\it uniformly} approximated by smooth in time functions.
\par
We now turn to space-regular measures.
\begin{definition}\label{Def0.sreg} A measure $\mu\in M_b(\R,H)$ is space-regular if for every $\eb>0$ there exists a finite dimensional subspace $H_\eb\subset H$ and a measure $\mu_\eb\in M_b(\R,H_\eb)$ such that
\begin{equation}
\|\mu-\mu_\eb\|_{M_b(\R,H)}\le \eb.
\end{equation}
In other words the space $M_b^{s-reg}(\R,H)$ of space-regular measures is a closure in $M_b(\R,H)$ of measures with finite-dimensional ranges.
\end{definition}
As shown in \cite{ZelDCDS,SZUMN}, any measure $\mu\in M_b(\R,W)$ where $W$ is a Banach space compactly embedded in $H$ is space-regular
$$
M_b(\R,W)\subset M_b^{s-reg}(\R,H).
$$
Moreover, the absolutely continuous measure $\mu\in M_b(\R,H)$ is translation-compact if and only if it is both space and time regular:
\begin{equation}\label{0.tr-c}
L^{1,tr-c}_b(\R,H)=M_b^{t-reg}(\R,H)\cap M_b^{s-reg}(\R,H),
\end{equation}
see \cite{SZUMN} for details. However, \eqref{0.tr-c} fails in $M_b(\R,H)$. Indeed, if you take any time-periodic measure with non-zero discrete component, it will be translation-compact, but not time-regular. The reason for that failure is that, in contrast to the space $L^p(t,t+1,H)$ a general measure $\mu\in M(t,t+1,H)$ does not possess a modulus of continuity in $M(t,t+1,H)$.
\par
We also note that Definition \ref{Def0.sreg} can be essentially simplified in the case where the space $H$ possesses a Schauder base $\{e_n\}_{n=1}^\infty$. Indeed, let $P_N:H\to H_N$ be a projector to the finite dimensional space $H_N:=\operatorname{span}\{e_1,\cdots,e_N\}$:
$$
P_Nx:=\sum_{n=1}^Nx_ne_n
$$
and let $Q_N:=1-P_N$. The most important for us is the case where $H$ is a separable Hilbert space. Then we may take any orthonormal base as a Schauder base in $H$.
\begin{proposition} Let the space $H$ possess a Schauder base $\{e_n\}$. Then, the measure $\mu\in M_b(\R,H)$ is space-regular if and only if, for every $\eb>0$, there exists $N=N(\eb)$, such that
\begin{equation}\label{0.q}
\|\mu-P_{N(\eb)}\mu\|_{M_b(\R,H)}\le\eb.
\end{equation}
\end{proposition}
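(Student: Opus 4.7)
The plan is to prove the two implications separately. For the ``if'' direction I would simply note that if $\|\mu-P_{N(\eb)}\mu\|_{M_b(\R,H)}\le\eb$ for every $\eb>0$, then $\mu_\eb:=P_{N(\eb)}\mu$ takes values in the finite-dimensional space $H_{N(\eb)}=\operatorname{span}\{e_1,\dots,e_{N(\eb)}\}$, so Definition \ref{Def0.sreg} is satisfied with $H_\eb:=H_{N(\eb)}$.

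For the converse, fix $\eb_0>0$. The crucial input I would use is the uniform boundedness $K:=\sup_N\|P_N\|_{H\to H}<\infty$ of the Schauder projections (the basis constant), a classical consequence of the Banach--Steinhaus theorem. Combining this with the dual description of the $M(t,t+1;H)$-norm recalled in the paper, the identity $\int(\psi,(P_N\mu)(ds))=\int(P_N^*\psi,\mu(ds))$, and the bound $\|P_N^*\|_{H^*\to H^*}=\|P_N\|_{H\to H}\le K$, one obtains $\|P_N\mu\|_{M_b(\R,H)}\le K\|\mu\|_{M_b(\R,H)}$. Next, for a small parameter $\delta>0$ to be chosen, the assumed space-regularity of $\mu$ provides $\mu_\delta\in M_b(\R,H_\delta)$ with $H_\delta\subset H$ finite-dimensional and $\|\mu-\mu_\delta\|_{M_b}\le\delta$; the triangle inequality then yields
\[
\|\mu-P_N\mu\|_{M_b}\le \|\mu-\mu_\delta\|_{M_b}+\|\mu_\delta-P_N\mu_\delta\|_{M_b}+\|P_N(\mu_\delta-\mu)\|_{M_b}\le (1+K)\delta + \|\mu_\delta-P_N\mu_\delta\|_{M_b}.
\]
Setting $\delta:=\eb_0/(2(1+K))$, it remains to show that the middle term can be made $\le\eb_0/2$ by choosing $N$ large.

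To handle the middle term I would fix a basis $v_1,\dots,v_k$ of $H_\delta$ together with the coordinate functionals $\alpha_i\in H_\delta^*$, extended to $H^*$ with the same norm via Hahn--Banach. Since $H_\delta$ is finite-dimensional, $\mu_\delta$ decomposes as $\mu_\delta=\sum_{i=1}^k \nu_i\,v_i$ with scalar Borel measures $\nu_i(A):=\alpha_i(\mu_\delta(A))$ satisfying $\|\nu_i\|_{M_b(\R)}\le \|\alpha_i\|_{H^*}\|\mu_\delta\|_{M_b}$. Then $\mu_\delta-P_N\mu_\delta=\sum_{i=1}^k \nu_i\,(v_i-P_Nv_i)$, and since $\|v_i-P_Nv_i\|_H\to 0$ as $N\to\infty$ by the very definition of a Schauder base, $N$ can be chosen so large that each $\|v_i-P_N v_i\|_H$ is small enough to make the middle term $\le\eb_0/2$, completing the proof. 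The only mildly delicate step is the norm bound $\|P_N\mu\|_{M_b}\le K\|\mu\|_{M_b}$ for measures, which is established by duality as above; apart from this, the argument is a standard three-$\eb$ trick combined with the basis-constant property of Schauder bases, and I do not anticipate any genuine obstacle.
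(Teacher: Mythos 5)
Your proof is correct and follows essentially the same route as the paper's: the easy direction is identical, and for the converse both arguments combine the uniform boundedness of the Schauder projections (you via $K=\sup_N\|P_N\|$, the paper via $L=\sup_N\|Q_N\|$) with a triangle inequality and a decomposition of the finite-dimensional approximant over a dual basis, concluding from $\|v-P_Nv\|_H\to0$ for each fixed $v$. Your explicit duality verification of $\|P_N\mu\|_{M_b}\le K\|\mu\|_{M_b}$ is a detail the paper leaves implicit, but it does not change the argument.
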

\begin{proof}
Indeed, in one side the statement is obvious since we may take  $H_\eb:=P_{N(\eb)}H$ and $\mu_\eb=P_{N(\eb)}\mu$. Let us verify it in the other side. Let $\mu\in M_b^{s-reg}(\R,H)$, then for every $\eb>0$, there exists a finite dimensional subspace $H_\eb\subset H$ and a measure $\mu_\eb\in M_b(\R,H_\eb)$ such that
$$
\|\mu-\mu_\eb\|_{M_b(\R,H)}\le \eb/(2L),
$$
where $L=\sup_{N}\|Q_N\|_{H\to H}$. Then, obviously,
$$
\|Q_N(\mu-\mu_\eb)\|_{M_b(\R,H)}\le \eb/2.
$$
Since any finite-dimensional subspace of a B-space is complementable, there exists a base $\{e_i\}_{i=1}^M$ in $H_\eb\subset H$ and a dual base $\{e_i^*\}_{i=1}^M\subset H^*$ such that $\<e_i^*,e_j\>=\delta_{i,j}$. Therefore,
$$
\mu_\eb=\sum_{i=1}^M\<e_i^*,\mu_\eb\>e_i.
$$
Thus, since $\lim_{N\to\infty}\|Q_Nv\|_H=0$ for all $v\in H$ and $\|\mu_\eb\|_{M_b(\R,H)}\le 2\|\mu\|_{M_b(\R,H)}$ (if $\eb>0$ is small enough), we  may fix $N=N(\eb)$ in such a way that
$$
\|Q_{N(\eb)}\mu_\eb\|_{M_b(\R,H)}\le \eb/2.
$$
Thus, $\|Q_{N(\eb)}\mu\|_{M_b(\R,H)}\le\eb/2+\eb/2=\eb$ and the proposition is proved.
\end{proof}
 \par
 The next corollary is especially useful for our purposes.
 \begin{corollary} Let $H=L^2(\Omega)$ where $\Omega$ is a bounded domain of $\R^n$ with a smooth boundary. Then,
 \begin{equation}\label{0.eggog}
 M_b^{s-reg}(\R,H)=[M_b(\R,H^1_0(\Omega))]_{M_b(\R,H)}
 \end{equation}
 and
 \begin{equation}\label{0.eggog1}
 M_b^{s-reg}(\R,H)\cap M^{una}_b(\R, H)=[M^{una}_b(\R,H^1_0(\Omega))]_{M_b(\R,H)}.
 \end{equation}
 \end{corollary}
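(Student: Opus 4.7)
The plan is to combine the Schauder-basis characterization of space-regular measures from the Proposition just above with a convenient basis of $L^2(\Omega)$ that already lies inside $H^1_0(\Omega)$, for instance the orthonormal basis $\{e_n\}_{n=1}^\infty$ of eigenfunctions of the Dirichlet Laplacian on $\Omega$. Throughout, let $P_N$ denote the $L^2$-orthogonal projection onto $H_N:=\operatorname{span}\{e_1,\ldots,e_N\}\subset H^1_0(\Omega)$.

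For the first identity \eqref{0.eggog}, the inclusion $\supset$ is essentially free: since $H^1_0(\Omega)$ embeds compactly into $H=L^2(\Omega)$ by Rellich--Kondrachov, the remark recalled right after Definition~\ref{Def0.sreg} gives $M_b(\R,H^1_0(\Omega))\subset M_b^{s-reg}(\R,H)$, and $M_b^{s-reg}(\R,H)$ is closed in $M_b(\R,H)$ by definition. For the reverse inclusion, I take any $\mu\in M_b^{s-reg}(\R,H)$ and apply the preceding Proposition to the basis $\{e_n\}$ to obtain $N(\eb)$ with $\|\mu-P_{N(\eb)}\mu\|_{M_b(\R,H)}\le \eb$. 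Since each $P_N\mu$ has range in $H_N\subset H^1_0(\Omega)$, it belongs to $M_b(\R,H^1_0(\Omega))$, and letting $\eb\to 0$ places $\mu$ in the required closure.

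For the second identity \eqref{0.eggog1}, the inclusion $\supset$ rests on two observations: first, the continuous embedding $L^2(\Omega)\hookrightarrow H^{-1}(\Omega)=(H^1_0(\Omega))^*$ makes every $\psi\in L^2(\Omega)$ admissible as a test functional in the $H^1_0$-duality pairing, so weak uniform non-atomicity in $H^1_0(\Omega)$ descends to weak uniform non-atomicity in $L^2(\Omega)$; second, $M_b^{una}(\R,H)$ is closed in $M_b(\R,H)$ by definition. Combined with the first identity these give $\supset$. For $\subset$, given $\mu\in M_b^{s-reg}(\R,H)\cap M_b^{una}(\R,H)$, the task is to verify that the approximants $P_N\mu$, viewed now as $H^1_0$-valued measures, lie in $M_b^{una}(\R,H^1_0(\Omega))$. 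Writing $P_N\mu=\sum_{i=1}^N\mu_i\otimes e_i$ with scalar measures $\mu_i(ds):=(\mu(ds),e_i)_{L^2}$, each $\mu_i$ inherits a non-atomicity modulus $\omega_{e_i}$ from $\mu$. For an arbitrary $\varphi\in H^{-1}(\Omega)$ one then has
\[
\Bigl|\int_{[t,t+\eb]}\l P_N\mu(ds),\varphi\r_{H^1_0,H^{-1}}\Bigr|\le \sum_{i=1}^N |\l e_i,\varphi\r|\,\omega_{e_i}(\eb),
\]
whose right-hand side is a finite sum of moduli of continuity and therefore tends to $0$ as $\eb\to 0$.

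The only step requiring a bit of care is this last one, namely promoting weak uniform non-atomicity from $L^2$-test functions on $\mu$ to $H^{-1}$-test functions on the finite-rank truncation $P_N\mu$; everything else is a direct combination of the Schauder-basis Proposition, the closedness of $M_b^{s-reg}$ and $M_b^{una}$, and the Rellich--Kondrachov compact embedding.
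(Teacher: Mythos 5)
Your proposal is correct and follows essentially the same route as the paper, which simply notes that one may take the eigenvectors of the Dirichlet Laplacian as a Schauder basis and use $P_N\mu$ as the approximants; you have merely filled in the details, including the (correct) verification that the finite-rank truncations $P_N\mu$ inherit weak uniform non-atomicity as $H^1_0$-valued measures. No gaps.
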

Indeed, we may take e.g. the eigenvectors of Dirichlet-Laplacian as a Schauder base in $H$ and use $P_N\mu$ as approximations of $\mu$. Note also that \eqref{0.eggog} is proved in \cite{ZelDCDS} in a bit more general setting where $H$ is a Sobolev space which is not Hilbert. Moreover, as in Proposition \ref{Prop0.t-reg}, the approximation of $\mu\in M_b^{s-reg}$ by smooth in space functions is uniform with respect to $\nu\in\Cal H(\mu)$.
\par
We now define some kind of a mixture of space and time regular measures.
\begin{definition}\label{Def0.wreg}A measure $\mu\in M_b(\R,H)$ is weakly regular if, for every $\eb>0$,  there exist a modulus of continuity $\omega(z)$ with $\lim_{z\to0}\omega_\eb(z)=0$, a finite-dimensional subspace $H_\eb\subset H$ and a measure $\mu_\eb\in M_b(\R,H_\eb)$ such that
\begin{equation}
  \|(\mu-\mu_\eb)-T(h)(\mu-\mu_\eb)\|_{M_b(\R,H)}\le\eb+\omega_{\eb}(|h|),\ h\in\R.
\end{equation}
\end{definition}
The next proposition gives a convenient alternative way to present a weakly regular measure.
\begin{proposition}\label{Prop0.str}   For every $\eb>0$, a  weakly regular measure can be presented as a sum of a space-regular, time-regular and $\eb$-small measures. Thus,
\begin{equation}\label{0.main}
M_b^{w-reg}(\R,H)=[M_b^{t-reg}(\R,H)+M_b^{s-reg}(\R,H)]_{M_b(\R,H)}.
\end{equation}
\end{proposition}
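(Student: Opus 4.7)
My plan is to prove the two inclusions in $M_b^{w-reg}(\R,H)=[M_b^{t-reg}(\R,H)+M_b^{s-reg}(\R,H)]_{M_b(\R,H)}$ separately, using throughout the characterization \eqref{0.t-crit} of time-regularity by a shift modulus of continuity and Definition \ref{Def0.sreg} for space-regularity.

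For the inclusion $\supseteq$, I first establish the algebraic inclusion $M_b^{t-reg}+M_b^{s-reg}\subset M_b^{w-reg}$. Given $\mu=\mu^t+\mu^s$ with $\mu^t$ time-regular (with modulus $\omega^t$) and $\mu^s$ space-regular, and given $\eb>0$, Definition \ref{Def0.sreg} furnishes a finite-dimensional $H_\eb\subset H$ and $\mu_\eb\in M_b(\R,H_\eb)$ with $\|\mu^s-\mu_\eb\|_{M_b}\le\eb/2$. Writing $\mu-\mu_\eb=\mu^t+(\mu^s-\mu_\eb)$ and applying the triangle inequality together with \eqref{0.t-crit} for $\mu^t$ gives
$$\|(\mu-\mu_\eb)-T(h)(\mu-\mu_\eb)\|_{M_b}\le\omega^t(|h|)+2\|\mu^s-\mu_\eb\|_{M_b}\le\eb+\omega^t(|h|),$$
confirming $\mu\in M_b^{w-reg}$. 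Next I verify that $M_b^{w-reg}$ is itself closed in $M_b$: if $\mu_n\to\mu$ in $M_b$ with $\mu_n$ weakly regular, then picking $n$ so that $\|\mu-\mu_n\|_{M_b}\le\eb/4$ and applying the weak-regularity decomposition to $\mu_n$ with parameter $\eb/2$ produces an approximant that also works for $\mu$ (up to an additive error $2\|\mu-\mu_n\|_{M_b}\le\eb/2$ which is absorbed into $\eb$). Combined, these yield $[M_b^{t-reg}+M_b^{s-reg}]_{M_b}\subset M_b^{w-reg}$.

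For the reverse inclusion $\subseteq$, given $\mu\in M_b^{w-reg}$ and $\eb>0$, I extract from Definition \ref{Def0.wreg} the space-regular part $\mu_\eb\in M_b(\R,H_\eb)\subset M_b^{s-reg}$ and the modulus $\omega_\eb$ controlling the tail $\nu:=\mu-\mu_\eb$ via $\|\nu-T(h)\nu\|_{M_b}\le\eb+\omega_\eb(|h|)$. The key step is to mollify $\nu$ in time: I choose a smooth nonnegative kernel $\rho_\delta\in C_0^\infty(\R)$ with $\int\rho_\delta=1$ and support in $[-\delta,\delta]$, and set $\tilde\nu:=\rho_\delta*\nu$. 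Standard convolution properties in the measure-valued setting show $\tilde\nu\in C^\infty_b(\R,H)\subset W^{s,2}_b(\R,H)$ and hence $\tilde\nu\in M_b^{t-reg}$. Then, testing against $\psi\in C_0(\R,H^*)$ and applying Fubini yields the vector-valued identity
$$\nu-\tilde\nu=\int_\R\rho_\delta(h)\bigl(\nu-T(h)\nu\bigr)\,dh,$$
from which I estimate
$$\|\nu-\tilde\nu\|_{M_b}\le\int_\R\rho_\delta(h)\,\|\nu-T(h)\nu\|_{M_b}\,dh\le\eb+\sup_{|h|\le\delta}\omega_\eb(|h|).$$
Choosing $\delta$ so small that $\omega_\eb(\delta)\le\eb$, I obtain $\mu=\mu_\eb+\tilde\nu+(\nu-\tilde\nu)$ with $\mu_\eb\in M_b^{s-reg}$, $\tilde\nu\in M_b^{t-reg}$, and $\|\nu-\tilde\nu\|_{M_b}\le 2\eb$. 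Since $\eb$ is arbitrary, this places $\mu$ in $[M_b^{t-reg}+M_b^{s-reg}]_{M_b}$.

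I expect the main technical obstacle to be the mollification step in the measure-valued setting: justifying that $\rho_\delta*\nu$ genuinely lies in $W^{s,2}_b(\R,H)$ (rather than merely in $L^\infty_b$), and rigorously establishing the displayed integral representation as an equality in $M_b$ via duality with $C_0(\R,H^*)$. Once these are handled, the remainder of the proof is bookkeeping with the triangle inequality and the weak-regularity bound.
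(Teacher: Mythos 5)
Your proof is correct, and your main direction ($\subseteq$) is essentially the paper's own argument: mollify the remainder $\nu=\mu-\mu_\eb$ in time with a compactly supported kernel and use the shift bound from Definition \ref{Def0.wreg} to control $\|\nu-\rho_\delta*\nu\|_{M_b}$ by $\eb+\omega_\eb(\delta)$, so that the smooth convolution supplies the time-regular summand. You additionally write out the reverse inclusion (the algebraic sum is weakly regular, plus closedness of $M_b^{w-reg}(\R,H)$ in $M_b(\R,H)$), which the paper leaves implicit; both halves check out.
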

\begin{proof} Indeed, according to the definition, for every $\eb>0$, the measure $\mu$ can be presented in the form
$$
\mu=\mu_\eb+(\mu-\mu_\eb),
$$
where $\hat\mu_\eb:=\mu-\mu_{\eb}$ is $\eb$-close to  time-regular measures and the remainder is $\eb$-small. Let us apply  the mollifying operator to the measure $\hat\mu_\eb$:
$$
({\Cal S}_h\hat\mu_\eb)(t):=\int_\R\varphi_h(s)T(t)\hat\mu_\eb(ds),
$$
where $\varphi_h$ is a symmetric normalized mollifying kernel with $\operatorname{supp}\varphi_h\subset[-h,h]$. To be more precise, the action of the
regular measure ${\Cal S}_h\hat\mu_\eb$ on a continuous function $\psi\in C(\R)$ with the finite support is defined by
\begin{multline*}
\int_\R\psi(t)\Cal S_h\hat\mu_\eb(dt):=\int_\R \psi(t)\(\int_\R \varphi_h(y-t)\hat\mu_\eb(dy)\)\,dt=\\=\int_\R\varphi_h(s)\(\int_\R \psi(y-s)\hat\mu_\eb(dy)\)\,ds=\int_\R\varphi_h(s)\(\int_\R \psi(y)T(s)\hat\mu_\eb(dy)\)\,ds,
\end{multline*}
where we have used the Fubini theorem and change of a doom variable $t\to s:=y-t$. Therefore,
$$
\int_{\R}\psi(t)\Cal S_h\hat\mu_\eb(dt)-\int_\R \psi(t)\hat\mu_\eb(dt)=\int_\R\varphi_h(s)\(\int_\R \psi(y)(T(s)\hat\mu_\eb-\hat\mu_\eb)(dy)\)\,ds
$$
and
$$
\|\hat \mu_\eb-\Cal S_h\hat\mu_{\eb}\|_{M_b(\R,H)}\le \sup_{|s|\le h}\|\hat\mu_\eb-T(s)\hat\mu_\eb\|_{M_b(\R,H)}\le \eb+\omega_\eb(|h|).
$$

Let us fix $h>0$ in such a way $\omega_{\eb}(h)\le \eb$.   Then, the measure $\mu$ can be formally written in the form:
\begin{equation}
\mu=\mu_\eb+\Cal S_h\hat\mu_\eb+(\hat\mu_\eb-\Cal S_h\hat\mu_\eb).
\end{equation}
It only remains to note that the measure $\mu_\eb$ is space-regular, the measure $\Cal S_h\hat \mu_\eb$ is smooth in time and therefore is time-regular and the remainder  satisfies
$$
\|\hat\mu_\eb-\Cal S_h\hat\mu_\eb\|_{M_b(\R,H)}\le 2\eb.
$$
Thus, the proposition is proved.
\end{proof}
\begin{remark} The typical for hyperbolic equations assumption is that the
 measure $\mu$ is weakly regular and weakly non-atomic simultaneously. We do not know whether or not such a measure can be approximated by the sums of time-regular and space-regular measures both of them is weakly non-atomic:
 \begin{equation}\label{0.good0}
 M^{w-reg}_b(\R,H)\cap M^{una}_b(\R,H)=[M^{t-reg}_b(\R,H)+(M^{s-reg}_b(\R,H)\cap M^{una}_b(\R,H))]_{M_b(\R,H)}.
 \end{equation}
 However, the answer on this question is clearly positive in the case where $H$ possesses a Schauder base. Indeed, in this case we may take $H_\eb=P_{N_\eb}H$ and $\mu_\eb=P_{N_\eb}\mu$. Thus, the last measure is clearly non-atomic if the initial $\mu$ is non-atomic.
 \par
 Combining \eqref{0.good0} with \eqref{0.eggog1} and Proposition \ref{Prop0.t-reg}, we end up with the following useful characterization of weakly regular non-atomic measures for the case $H=L^2(\Omega)$:
 \begin{equation}\label{0.good}
 M^{w-reg}_b(\R,H)\cap M^{una}_b(\R,H)=[W^{1,2}_b(\R,H)+ M^{una}_b(\R,H^1_0(\Omega))]_{M_b(\R,H)}.
 \end{equation}
Moreover, as in the case of space-regular or time-regular measures, these approximations are uniform with respect to all $\nu\in\Cal H(\mu)$.
\end{remark}
To complete this section, we briefly discuss the classes of external forces adapted to parabolic PDEs which have been introduced in \cite{13,LWZ,14,15,ZelDCDS}. These classes will be used in the sequel in our study of Navier-Stokes equations.
\begin{definition}\label{Def0.norm} A function $g\in L^p(\R,H)$, $1<p<\infty$, is normal if there exists a modulus of continuity $\omega:\R_+\to\R_+$ such that $\lim_{z\to0}\omega(z)=0$ and
\begin{equation}\label{0.norm}
\sup_{t\in\R}\int_t^{t+h}\|g(s)\|^p_H\,ds\le \omega(|h|),\ \ h\in\R.
\end{equation}
A function $g\in L^p(\R,H)$ is weakly normal if, for any $\eb>0$, there exists a finite dimensional subspace $H_\eb\subset H$, a modulus of continuity $\omega_\eb(z)$ and a function $g_\eb\in L^p_b(\R,H_\eb)$ such that
\begin{equation}\label{0.w-norm}
\sup_{t\in\R}\int_t^{t+h}\|g(s)-g_\eb(s)\|^p_H\,ds\le \eb+\omega_\eb(|h|).
\end{equation}
\end{definition}
The key properties of normal and weakly normal functions are collected in the following proposition, see \cite{ZelDCDS,LWZ} for the proof.
\begin{proposition} Let $g\in L^p_b(\R,H)$ be normal (i.e. $g\in L^{p,norm}_b(\R,H)$). Then
\begin{equation}\label{0.exp-norm}
\lim_{L\to\infty}\sup_{t\in\R}\int_{-\infty}^te^{-L(t-s)}\|g(s)\|^p_H\,ds=0.
\end{equation}
Let $g\in L^p_b(\R,H)$ be weakly normal (i.e. $g\in L^{p,w-norm}_b(\R,H)$). Then, for every $\eb>0$, there exists a finite dimensional subspace $H_\eb\subset H$ and a function $g_\eb\in L^p_b(\R,H_\eb)$  such that
\begin{equation}\label{0.exp-wnorm}
\limsup_{L\to\infty}\sup_{t\in\R}\int_{-\infty}^te^{-L(t-s)}\|g(s)-g_\eb(s)\|^p_H\,ds\le\eb.
\end{equation}
Moreover, the limits \eqref{0.exp-norm} and \eqref{0.exp-wnorm} are uniform with respect to all $h\in\Cal H(g)$.
\end{proposition}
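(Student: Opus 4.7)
The plan is to reduce both estimates to an interval-decomposition of the half-line $(-\infty,t]$. For the normal case, I would fix $t\in\R$ and $h>0$, partition $(-\infty,t]$ into intervals $I_k:=[t-(k+1)h,\,t-kh]$ for $k\ge 0$, and exploit the uniform bound $e^{-L(t-s)}\le e^{-Lkh}$ on $I_k$ together with normality $\int_{I_k}\|g(s)\|^p_H\,ds\le\omega(h)$. Summing the geometric series produces the uniform-in-$t$ bound
\begin{equation*}
\sup_{t\in\R}\int_{-\infty}^t e^{-L(t-s)}\|g(s)\|^p_H\,ds\;\le\;\frac{\omega(h)}{1-e^{-Lh}}.
\end{equation*}
Letting first $L\to\infty$ (with $h$ fixed) and then $h\to 0$ yields \eqref{0.exp-norm}.

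For the weakly normal case I would pick the finite-dimensional approximator $g_\eb$ from Definition \ref{Def0.norm} (with $\eb$ replaced by $\eb/2$) and apply the same partition to the difference $g-g_\eb$, invoking \eqref{0.w-norm} in place of normality on each $I_k$. This yields
\begin{equation*}
\sup_{t\in\R}\int_{-\infty}^t e^{-L(t-s)}\|g(s)-g_\eb(s)\|^p_H\,ds\;\le\;\frac{\eb/2+\omega_{\eb/2}(h)}{1-e^{-Lh}}.
\end{equation*}
Taking $\limsup_{L\to\infty}$ and then choosing $h$ small enough that $\omega_{\eb/2}(h)\le\eb/2$ produces the desired bound $\eb$. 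For uniformity over the hull I would argue that both the normality condition and the weak-normality data are translation-invariant and preserved under weak limits in $L^p_{loc}(\R,H)$: if $\nu\in\Cal H(g)$ is a weak limit of some sequence $T(s_k)g$, then by lower semicontinuity of the $L^p$-norm one transfers the same modulus $\omega$ to $\nu$ in the normal case; in the weakly normal case, extracting a further weak limit of $T(s_k)g_\eb$ in $L^p_b(\R,H_\eb)$ gives an approximator $\nu_\eb$ for $\nu$ satisfying \eqref{0.w-norm} with the same $\omega_\eb$. The interval-decomposition bounds then apply verbatim to each $\nu\in\Cal H(g)$ with rates independent of $\nu$.

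The main obstacle is the uniformity in the weakly normal case, where one must verify that a weak-limit approximator $\nu_\eb$ still lies in the finite-dimensional space $H_\eb$ (so that the number of coordinates does not blow up along the hull) and that the approximation estimate \eqref{0.w-norm} survives the weak passage to the limit for the pair $(\nu,\nu_\eb)$. This reduces to the closedness of the linear subspace $L^p_b(\R,H_\eb)$ in the weak $L^p_{loc}$-topology (a consequence of finite-dimensionality of $H_\eb$) together with weak lower semicontinuity applied to $\nu-\nu_\eb$; the remaining computations are routine geometric-sum estimates.
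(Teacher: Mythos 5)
Your proof is correct and is essentially the standard argument from the references \cite{ZelDCDS,LWZ} to which the paper defers for this proposition: the geometric-series bound $\omega(h)/(1-e^{-Lh})$ over the partition $I_k=[t-(k+1)h,t-kh]$, followed by $L\to\infty$ then $h\to0$, and the transfer of the (weak) normality data to the hull via weak lower semicontinuity of the $L^p$-norm together with the weak closedness of $L^p(I,H_\eb)$. No gaps.
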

\begin{remark}\label{Rem0.w-norm} In the case where $H$ possesses a Schauder base we may take $H_\eb:=P_{N(\eb)}H$ and $g_\eb(t)=P_{N(\eb)}g(t)$ in the Definition \ref{Def0.norm}, In this case, property \eqref{0.exp-wnorm} can be reformulated in a more convenient form: if $g\in L^{p,w-norm}_b(\R,H)$, then, for every $\eb>0$ there exist $N(\eb)$ and $L(\eb)$ such that
\begin{equation}\label{0.exp-w-norm1}
\sup_{t\in\R}\int_{-\infty}^te^{-L(t-s)}\|Q_{N}g(s)\|^p_H\,ds\le \eb
\end{equation}
for all $N\ge N(\eb)$ and $L\ge L(\eb)$.  Moreover, this estimate is also uniform with respect to all $h\in\Cal H(g)$.
\end{remark}

\section{Uniform attractors and entropy: an abstract scheme}\label{s3.m}
In this section, we state and prove the main theorem which will allow us in a sequel to get entropy estimates for various classes of dissipative PDEs. We first recall some known constructions related with uniform attractors.
\par
Let $E$ and $H$ be two reflexive B-spaces and let $M_b^{una}(\R,H)$ be the space of uniformly (weakly) non-atomic $H$-valued measures endowed by the weak-star topology, see section \S\ref{s.2n}  for the definitions. Then, the group of shifts
\begin{equation}\label{1.Th}
T(h): M_b^{una}(\R,H)\to M_b^{una}(\R,H),\ \ \ T(h)\mu (t):=\mu(t+h),\ \ t,h\in\R
\end{equation}
acts continuously. Moreover, the restriction operators $\mu\to\mu|_{t\in[a,b]}$ are also continuous as maps from $M_b^{una}(\R,H)$ to $M^{una}([a,b],H)$ in the weak star topology. For any element $\mu\in M_b^{una}(\R,H)$, we define its hull $\Cal H(\mu)$ via Definition \ref{Def0.hull}, where the closure is taken in the topology of $M_{loc}^{w^*}(\R,H)$.
Then $\Cal H(\mu)$ is a compact set in  $M_b^{una}(\R,H)$, see \cite{SZUMN}.
\par
Let us further assume that, for every $\nu\in\Cal H(\mu)$, we are given a dynamical process
 $U_\nu(t,\tau): E\to E$, $t\ge\tau\in\R$. The latter means that these operators satisfy the Markovian property
 \begin{equation}
 U_\nu(\tau,\tau)=\operatorname{Id},\ \ U_\nu(t,\tau)=U_\nu(t,s)\circ U_\nu(s,\tau),\ \ t\ge s\ge\tau\in\R.
 \end{equation}
 Assume also that the family of processes $\{U_\nu(t,\tau),\nu\in\Cal H(\mu)\}$ satisfies the translation identity
 \begin{equation}\label{1.trans}
 U_{\nu}(t+h,\tau+h)=U_{T(h)\nu}(t,\tau),\ \ t\ge\tau,\ \ \tau\in\R,\ \ h\in\R.
 \end{equation}
 Then we may define an extended semigroup (cocycle)  $\mathbb S(t)$ acting on $\Bbb E:=E\times\Cal H(\mu)$ via
 \begin{equation}
 \Bbb S(t)(\xi,\nu):=(U_\nu(t,0)\xi,T(t)\nu),\ \ t\ge0,\ \ \xi\in E,\ \ \nu\in\Cal  H(\mu)
 \end{equation}
 and study the global attractor of this semigroup in the appropriate topology.
\begin{definition}\label{Def1.attr} A set $\Bbb A^w$ is a weak global attractor of the semigroup $\Bbb S(t)$ if
\par
1) $\Bbb A^w$ is compact in $\Bbb E^w:=E^w\times\Cal H(\mu)$, where $E^w$ stands for the space $E$ endowed with the weak topology;
\par
2) $\Bbb A^w$ is invariant: $\Bbb S(t)\Bbb A^w=\Bbb A^w$, $t\ge0$;
\par
3) $\Bbb A^w$ attracts the images of bounded sets in $\Bbb E$ in the topology of $\Bbb E^w$, i.e., for any $\Bbb B$ bounded in $\Bbb E$ and any neighbourhood $\Cal O(\Bbb A^w)$ in the topology of $\Bbb E^w$ there exists $T=T(\Cal O,\Bbb B)$ such that
$$
\Bbb S(t)\Bbb B\subset\Cal O(\Bbb A^w),\ \ t\ge T.
$$
The strong attractor $\Bbb A^s$ is defined analogously, but the  weak topology on $E$ should be replaced by the norm topology. The weak star topology in $\Cal H(\mu)$ remains unchanged. Note that in both cases the amount of bounded sets remains the same: the set $\Bbb B$ is bounded in $\Bbb E^w$ or in
$\Bbb E$ if and only if its projection $\Pi_1\Bbb B$ to the first component is bounded in $E$.
\par
Finally the (weak, strong) uniform attractor for the family of processes $\{U_\nu(t,\tau),\nu\in\Cal H(\mu)\}$ is nothing more than the projection of the corresponding global attractor to the first component:
$$
\Cal A_{un}^s=\Pi_1\Bbb A^s,\ \  \Cal A_{un}^w=\Pi_1\Bbb A^w.
$$
\end{definition}
\begin{remark} It is well-known that $\Bbb A^s=\Bbb A^w$ if both attractors exist (and that the existence of a strong attractor implies the existence of a weak one). Moreover, the uniform attractor $\Cal A^s_{un}$ possesses an intrinsic definition without referring to the extended semigroup. Namely, the set $\Cal A^s_{un}$ is a strong uniform attractor for the family $\{U_\nu(t,\tau),\nu\in\Cal H(\mu)\}$ if
\par
1) The set $\Cal A_{un}^s$ is compact in $E$;
\par
2) It attracts uniformly with respect to $\nu\in \Cal H(\mu)$ the images of all bounded sets, i.e., for every bounded set $B\subset E$ and every neighbourhood $\Cal O(\Cal A^s_{un})$ of $\Cal A^s_{un}$ in $E$ there exists $T=T(\Cal O,B)$ such that
$$
\cup_{\nu\in\Cal H(\mu)}U_\nu(t,0)B\subset \Cal O(\Cal A^s_{un}),\ \forall t\ge T;
$$
\par
3) $\Cal A^s_{un}$ is a minimal set which satisfies the above two properties.
\par
The above alternative definition holds for the weak attractor $\Cal A^w_{un}$ as well (with the natural replacement of the norm topology by the weak one), see \cite{CV,ZelDCDS} for more details.
\par
There is also a natural choice of strong topologies in both components $E$ and $\Cal H(\mu)$, but in this case we need the compactness of the hull $\Cal H(\mu)$ in the strong topology as well which requires the translation-compactness of the initial external force $\mu$. Since we are mainly interested in the case when this condition is violated, we do not consider this choice here.
\end{remark}
The next standard result gives the conditions for the existence of the attractors introduced above.
\begin{theorem}\label{Th1.exist} Let the family of processes $\{U_\nu(t,\tau),\nu\in\Cal H(\mu)\}$ satisfy the above assumptions. Assume also that this family is uniformly dissipative in $E$, i.e. that
\begin{equation}\label{1.dis}
\|U_\nu(t,\tau)\xi\|_E\le Q(\|\xi\|_E)e^{-\alpha(t-\tau)}+C_*,\ \ t\ge\tau,\  \nu\in\Cal H(\mu),\ \ \xi\in E
\end{equation}
holds uniformly with respect to $\nu$, $t$ and $\tau$. Here $\alpha>0$ and $Q$ is monotone increasing function both independent of $\xi$, $\nu$, $t$ and $\tau$.
\par
Then the family $\{U_\nu(t,\tau),\nu\in\Cal H(\mu)\}$ possesses a weak uniform attractor $\Cal A^w_{un}\subset {E}$.
\par
If, in addition, this family is uniformly asymptotically compact on bounded sets, i.e., for every $B$ bounded in $E$ and  every sequences $\xi_n\in B$, $\nu_n\in\Cal H(\mu)$, $t_n\to\infty$, the sequence
$$
\{U_{\nu_n}(t_n,0)\xi_n,\ n\in\Bbb N\}
$$
is precompact in $E$, then the weak uniform attractor $\Cal A^w_{un}$ coincides with the strong one $\Cal A^s_{un}=\Cal A^w_{un}$.
\end{theorem}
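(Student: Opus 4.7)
The plan is to realize both attractors as $\omega$-limit sets of a uniformly absorbing set for the extended semigroup $\Bbb S(t)$ on $\Bbb E=E\times\Cal H(\mu)$, following the standard Chepyzhov--Vishik scheme. First I would combine the Markovian identity for $U_\nu$ with the translation identity \eqref{1.trans} to verify $\Bbb S(t)\circ\Bbb S(s)=\Bbb S(t+s)$, so that $\Bbb S(t)$ really is a semigroup on $\Bbb E$. Then \eqref{1.dis} immediately shows that
\begin{equation*}
\Bbb B:=\{\xi\in E\,:\,\|\xi\|_E\le 2C_*\}\times\Cal H(\mu)
\end{equation*}
is a uniformly absorbing set: every bounded $\Bbb B'\subset\Bbb E$ satisfies $\Bbb S(t)\Bbb B'\subset\Bbb B$ for all $t\ge T(\Bbb B')$.

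For the weak case, the key observation is that $\Bbb B$ is already compact in $\Bbb E^w:=E^w\times\Cal H(\mu)$: the ball in $E$ is weakly compact by reflexivity of $E$, while $\Cal H(\mu)$ is compact in $M^{w^*}_{loc}(\R,H)$ by its construction in Section~\ref{s.2n}. I would then define
\begin{equation*}
\Bbb A^w:=\bigcap_{s\ge 0}\bigl[\bigcup_{t\ge s}\Bbb S(t)\Bbb B\bigr]_{\Bbb E^w},
\end{equation*}
which is nonempty, compact in $\Bbb E^w$ and attracts bounded sets in the weak topology by the general $\omega$-limit construction. Invariance $\Bbb S(t)\Bbb A^w=\Bbb A^w$ is verified from a closed-graph property of $\Bbb S(t)$ on $\Bbb E^w\times\Bbb E^w$. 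Setting $\Cal A^w_{un}:=\Pi_1\Bbb A^w$ then produces the weak uniform attractor.

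For the strong case, I would upgrade the compactness of the tails of orbits from the weak to the norm topology via the uniform asymptotic compactness hypothesis: any sequence $\Bbb S(t_n)(\xi_n,\nu_n)=(U_{\nu_n}(t_n,0)\xi_n,T(t_n)\nu_n)$ with $(\xi_n,\nu_n)\in\Bbb B$ and $t_n\to\infty$ has its first component precompact in $E$ by assumption and its second component precompact in $\Cal H(\mu)$ by compactness of the hull. Hence $\bigcup_{t\ge s}\Bbb S(t)\Bbb B$ is strongly precompact in $\Bbb E$ for $s$ large, so the same set $\Bbb A^w$ is compact and attracting also in the strong topology; it follows that $\Bbb A^s=\Bbb A^w$ and $\Cal A^s_{un}=\Pi_1\Bbb A^s=\Cal A^w_{un}$.

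The main obstacle is verifying invariance of $\Bbb A^w$ under $\Bbb S(t)$, i.e.\ some closed-graph or sequential continuity property of $\Bbb S(t)$ on $\Bbb E^w\times\Bbb E^w$. This amounts to passing to the limit in $U_{\nu_n}(t,0)\xi_n$ as $\xi_n\to\xi$ weakly in $E$ and $\nu_n\to\nu$ in $M^{w^*}_{loc}(\R,H)$, which is essentially model-dependent and not automatic in the abstract set-up. In practice one either adds it as a standing assumption on the family $\{U_\nu\}$, or sidesteps it by using the intrinsic definition of the uniform attractor stated in the remark --- compactness, uniform attraction and minimality --- all three of which can be read off directly from \eqref{1.dis}, reflexivity of $E$ and the compactness of $\Cal H(\mu)$, without reference to invariance of $\Bbb S(t)$ at all.
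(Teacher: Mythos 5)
Your proposal is correct and follows essentially the same route as the paper, which does not spell out a proof but refers to \cite{CV} and notes precisely the two ingredients you use: the dissipative estimate \eqref{1.dis} together with the Banach--Alaoglu theorem (and compactness of $\Cal H(\mu)$) yielding a compact uniformly absorbing set in $\Bbb E^w$, and the fact that asymptotic compactness need only be checked on that absorbing set for the strong statement. Your closing observation about invariance is also consistent with the paper's treatment: the abstract theorem is read through the intrinsic (minimality-based) definition of the uniform attractor, while the continuity needed for invariance and the representation formula is imposed separately in Corollary \ref{Cor1.rep}.
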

For the proof of this theorem, see \cite{CV}. We mention here that the weak part of this theorem is strongly based on the dissipativity estimate and the Banach-Alaoglu theorem which gives us the existence of a compact uniformly absorbing set in $\Bbb E^w$, mention also that the asymptotic compactness is actually necessary to verify on this absorbing set only.
\par
The next standard result gives us the structure of a uniform attractor in the case where the family of processes is continuous.
\begin{corollary}\label{Cor1.rep} Let the family of dynamical processes $\{U_\nu(t,\tau),\nu\in\Cal H(\mu)\}$ be uniformly dissipative (i.e. \eqref{1.dis} holds) and the maps $(\xi,\nu)\to U_\nu(t,\tau)\xi$ be continuous as the maps from $\Bbb E^w$ to $E^w$ for every fixed $t$ and $\tau$. Then the attractor $\Cal A_{un}^w$ possesses the following description:
\begin{equation}\label{1.rep}
\Cal A_{un}^w:=\cup_{\nu\in\Cal H(\mu)}\Cal K_\nu,
\end{equation}
where
$$
\Cal K_\nu:=\{u:\R\to E,\ \ U_\nu(t,\tau)u(\tau)=u(t),\ t\ge\tau\in\R\}
$$
is a set of all complete bounded trajectories of the process $U_\nu(t,\tau)$ (the so-called kernel of $U_\nu(t,\tau)$ in the terminology of Vishik and Chepyzhov, see \cite{CV}).
\end{corollary}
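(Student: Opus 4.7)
The plan is to deduce the representation from the classical fact that the global attractor of a continuous semigroup equals the union of images at time zero of all complete bounded trajectories, applied to the extended semigroup $\Bbb S(t)$ on $\Bbb E^w = E^w\times\Cal H(\mu)$. First I would invoke Theorem \ref{Th1.exist} to obtain the weak global attractor $\Bbb A^w\subset\Bbb E^w$ whose projection to the first component is $\Cal A^w_{un}$. Under the weak continuity hypothesis on $(\xi,\nu)\mapsto U_\nu(t,\tau)\xi$, together with the continuity of the shift group $T(h)$ on $\Cal H(\mu)$ in the weak-star topology (which follows from the construction recalled at the start of Section \ref{s3.m}), the semigroup $\Bbb S(t)$ itself becomes continuous on $\Bbb E^w$ for every fixed $t\ge 0$.

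Next I would translate the trajectory description into the extended setting. Let $(\xi(t),\nu(t))$, $t\in\R$, be an arbitrary complete trajectory of $\Bbb S(t)$ that is bounded in $\Bbb E$; boundedness just means boundedness of $\xi(\cdot)$ in $E$ since $\Cal H(\mu)$ is already compact. From $\nu(t+s)=T(s)\nu(t)$ one gets $\nu(t)=T(t)\nu$ with $\nu:=\nu(0)\in\Cal H(\mu)$, and then the translation identity \eqref{1.trans} gives
\begin{equation*}
\xi(t+s)=U_{T(t)\nu}(s,0)\xi(t)=U_\nu(t+s,t)\xi(t),\quad t\in\R,\ s\ge 0,
\end{equation*}
so $\xi(\cdot)\in\Cal K_\nu$. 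Conversely, if $\nu\in\Cal H(\mu)$ and $\xi(\cdot)\in\Cal K_\nu$, setting $\Xi(t):=(\xi(t),T(t)\nu)$ defines a complete bounded trajectory of $\Bbb S(t)$ by the same translation identity. This establishes a bijection between complete bounded trajectories of $\Bbb S(t)$ and pairs $(\xi,\nu)$ with $\nu\in\Cal H(\mu)$ and $\xi\in\Cal K_\nu$.

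Now I would apply the classical characterization: for a continuous semigroup on a metric (or, more generally, a Hausdorff) space possessing a global attractor, the attractor coincides with the set $\{\gamma(0):\gamma\ \text{complete bounded trajectory}\}$. Applied to $\Bbb S(t)$ on $\Bbb E^w$ this yields $\Bbb A^w=\{(\xi(0),\nu):\nu\in\Cal H(\mu),\ \xi(\cdot)\in\Cal K_\nu\}$; projecting to the first component gives \eqref{1.rep}. The implication "complete bounded trajectory $\Rightarrow$ lies on the attractor" is just invariance plus forward attraction applied backwards in time; the converse "attractor $\Rightarrow$ extends to a complete bounded trajectory" uses invariance $\Bbb S(t)\Bbb A^w=\Bbb A^w$ to construct, via a diagonal/Zorn-style argument, a pre-image sequence producing the required backward branch, and this is where continuity of $\Bbb S(t)$ on $\Bbb E^w$ is essential to pass to weak-star limits.

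The only nontrivial step is this backward-extension argument on the attractor. The main obstacle is that $\Bbb E^w$ is not metrizable on unbounded sets, so one has to be slightly careful in extracting convergent subnets; however, since $\Bbb A^w$ is itself compact and metrizable (it is a compact subset of a reflexive Banach space times a metrizable hull), the standard diagonal construction applies and the argument runs without modification. Everything else is either a direct bookkeeping translation using \eqref{1.trans} or a straightforward consequence of invariance and uniform dissipativity \eqref{1.dis}.
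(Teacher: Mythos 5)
Your proposal is correct and is essentially the standard argument: the paper itself gives no proof of this corollary but defers to Chepyzhov--Vishik \cite{CV}, where the representation is obtained exactly as you describe, namely by identifying complete bounded trajectories of the extended semigroup $\Bbb S(t)$ on $\Bbb E^w$ with pairs $(\xi(\cdot),\nu)$, $\xi\in\Cal K_\nu$, via the translation identity \eqref{1.trans}, and then invoking the kernel description of the global attractor of a continuous semigroup before projecting to the first component. Your remarks on where weak continuity enters (invariance and the backward-extension step) and on the metrizability of the compact attractor are the right technical caveats.
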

The proof of this corollary can be found e.g. in \cite{CV}.
\begin{remark} Representation formula \eqref{1.rep} is crucial for what follows, so we will always assume in the sequel that the continuity assumptions of Corollary \ref{Cor1.rep} are satisfied. Note that this representation may be lost without the continuity assumptions (see \cite{SZUMN}), although it remains true if the continuity is replaced by the assumption that $(\xi,\nu)\to U_{\nu}(t,\tau)\xi$ have closed graphs, see \cite{Pata}.
\end{remark}
We now turn to the entropy estimates. For the convenience of the reader we first recall the definition of Kolmogorov's $\eb$-entropy, see \cite{KT,MirZel} for more detailed expositions and typical examples.
\begin{definition} Let $K$ be a compact set in a metric space $E$. Then, by the Hausdorff criterion, for any $\eb>0$, it can be covered by finitely many $\eb$-balls in $E$. Let $N_\eb(K,E)$ be the minimal balls which cover $K$. Then, the Kolmogorov's $\eb$-entropy $\Bbb H_\eb(K,E)$ of $K$ in $E$ is defined as
$$
\Bbb H_\eb(K,E):=\log_2N_\eb(K,E).
$$
\end{definition}
The next theorem gives us the main technical tool for obtaining the upper bounds for the Kolmogorov's entropy of the attractor.
\begin{theorem}\label{Th1.main-ent} Let the assumptions of Corollary \ref{Cor1.rep} hold, let $\Cal B$ be a bounded uniformly absorbing ball for the family $\{U_\nu(t,\tau)$, $\nu\in\Cal H(\mu)\}$, and let $s>0$ be such that
\begin{equation}\label{1.abs}
U_\nu(s,0)\Cal B\subset \Cal B,\ \ \forall\nu\in\Cal H(\mu).
\end{equation}
Assume also that, for every $\eb>0$, there exists a map
\begin{equation}\label{1.pr}
\Pr(\eb): \Cal H(\mu)\to M_b^{una}(\R,H),\ \  \Pr(\eb)\circ T(h)=T(h)\circ\Pr(\eb)
\end{equation}
such that the image $\Pr(\eb)\Cal H(\mu)$ is  precompact in the strong topology of $M_b^{una}(\R,H)$ and such that, for every $\xi_1,\xi_2\in\Cal B$ and every $\nu_1,\nu_2\in\Cal H(\mu)$, the following estimate holds:
\begin{multline}\label{1.squeez}
\|U_{\nu_1}(s,0)\xi_1-U_{\nu_2}(s,0)\xi_2\|_{E}\le{\frac1{16}}\|\xi_1-\xi_2\|_{E}+
L\|\xi_1-\xi_2\|_W+\\+\eb+C\|\Pr(\eb)\nu_1-\Pr(\eb)\nu_2\|_{M_b([0,s],H)},
\end{multline}
where $W$ is another B-space such that the embedding $E\subset W$ is compact and the constants $L$ and $C$ are independent of $\eb$ and $\xi_1,\xi_2,\nu_1,\nu_2$.
\par
Then the family of dynamical processes $\{U_\nu(t,\tau),\nu\in\Cal H(\mu)\}$ possesses a strong uniform attractor $\Cal A_{un}^s$ in $E$ and the Kolmogorov's $\eb$-entropy of this attractor possesses the following upper bounds:
\begin{multline}\label{1.ent-est}
\Bbb H_\eb(\Cal A^s_{un},E)\le C_1\log_2(\frac{2R_0}\eb)+\\+\Bbb H_{\eb/K_1}\(\Pr(\eb/K_1)\Cal H(\mu),M_b([0,L_1\log_2(\frac{2R_0}\eb)],H)\),\ \eb\le R_0,
\end{multline}
for some new constants $C_1$, $K_1$, $L_1$ and $R_0$ which are independent of $\eb\to0$.
\end{theorem}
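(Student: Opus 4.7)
The plan is to deduce the existence of $\Cal A^s_{un}$ from the squeezing estimate \eqref{1.squeez} and then bound its Kolmogorov entropy via an iterative construction of $\eb$-nets. For the existence, I would combine \eqref{1.squeez} with the compactness of the embedding $E\subset W$ and the precompactness of $\Pr(\eb)\Cal H(\mu)$ in $M_b^{una}(\R,H)$: on passing to the Kuratowski non-compactness measure in $E$, the $W$-term and the $\Pr(\eb)$-term vanish, yielding $\alpha_E\bigl(\cup_\nu U_\nu(s,0)\Cal B\bigr)\le \tfrac{1}{16}\alpha_E(\Cal B)+\eb$. Iterating and letting $\eb\to 0$ gives uniform asymptotic compactness, so Theorem \ref{Th1.exist} produces the strong uniform attractor $\Cal A^s_{un}$, which by Corollary \ref{Cor1.rep} admits the representation $\Cal A^s_{un}=\cup_\nu \Cal K_\nu$.

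For the entropy bound, the backward invariance of the kernels and the translation identity \eqref{1.trans} allow me to write every $\xi\in\Cal A^s_{un}$ as $\xi=U_\nu(Ns,0)\xi_0$ with $\nu\in\Cal H(\mu)$ and $\xi_0\in\Cal B$ for any positive integer $N$. Setting $\xi_k^{(i)}:=U_{\nu^{(i)}}(ks,0)\xi_0^{(i)}$ for two such representations, \eqref{1.squeez} combined with the commutation $\Pr(\eb')T(h)=T(h)\Pr(\eb')$ yields the step recursion
\begin{equation*}
\|\xi_{k+1}^{(1)}-\xi_{k+1}^{(2)}\|_E\le \tfrac{1}{16}\|\xi_k^{(1)}-\xi_k^{(2)}\|_E + L\|\xi_k^{(1)}-\xi_k^{(2)}\|_W + \eb' + C\|\Pr(\eb')\nu^{(1)}-\Pr(\eb')\nu^{(2)}\|_{M_b([ks,(k+1)s],H)}.
\end{equation*}
The key step, which produces the logarithmic first term in \eqref{1.ent-est}, is to cover with $W$-balls whose radius is proportional to the current $E$-resolution. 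If $r_k$ denotes the inductive $E$-diameter of an equivalence class at level $k$, I would cover the corresponding $E$-ball of radius $r_k$ by $W$-balls of radius $\rho_k:=r_k/(32L)$: by translation invariance and the scaling $B_E(c,r_k)=c+r_k B_E(0,1)$, the number of such balls equals the dimensionless finite constant $N_*:=N_{1/(32L)}(B_E(0,1),W)$, independent of $k$ and of $\eb$ (finite because $E\subset W$ is compact). For any pair in a common $W$-ball one has $L\|\xi_k^{(1)}-\xi_k^{(2)}\|_W\le r_k/16$, so the recursion collapses to $r_{k+1}\le r_k/8+\eb'+C\tilde\tau_k$, where $\tilde\tau_k:=\|\Pr(\eb')\nu^{(1)}-\Pr(\eb')\nu^{(2)}\|_{M_b([ks,(k+1)s],H)}$.

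Covering $\Pr(\eb')\Cal H(\mu)|_{[0,Ns]}$ by $M_b$-balls of radius $\tau$ and using the additivity of total variation over disjoint subintervals, $\sum_{k=0}^{N-1}\tilde\tau_k\le 2\tau$, the iterated recursion gives $r_N\le 2R_0/8^N+\tfrac{8}{7}\eb'+2C\tau$. Choosing $\eb'=\eb/K_1$ and $\tau=\eb/K_1$ for a suitable constant $K_1$, and $N=\lceil\log_8(8R_0/\eb)\rceil$, forces $r_N\le\eb$, so any two pairs sharing the $N$-tuple of $W$-balls and the $M_b$-ball of the projected-hull cover have $E$-distance at most $\eb$. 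The total number of such classes is bounded by $N_*^N\cdot N_{\eb/K_1}\bigl(\Pr(\eb/K_1)\Cal H(\mu)|_{[0,Ns]},M_b\bigr)$, which yields \eqref{1.ent-est} with $C_1:=\log_2 N_*/\log_2 8$ and $L_1:=s/\log_2 8$. The main obstacle I foresee is the simultaneous calibration of $\eb'$, $\tau$, $\rho_k$ and $N$ so that the $1/8$-contraction dominates all error terms uniformly across classes; a secondary but crucial technical point is the use of total-variation additivity over the disjoint subintervals $[ks,(k+1)s]$, which converts the $N$ step-wise hull increments into a single $M_b$-cover on $[0,Ns]$ and so keeps only one hull entropy term in the final estimate.
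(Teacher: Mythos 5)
Your proposal follows essentially the same route as the paper's proof: uniform asymptotic compactness via the Kuratowski measure and the $\tfrac1{16}$-contraction, then an iterated net construction in which each $E$-ball of the current resolution is covered by $W$-balls of proportional radius (so that by scaling and the compactness of $E\subset W$ the per-step multiplicity $N_*$ is a fixed constant), combined with a single covering of the projected hull on $[0,Ns]$, the count $N_*^{N}\cdot N_{\tau}\bigl(\Pr(\eb')\Cal H(\mu)|_{[0,Ns]}\bigr)$ and the choice $N\sim\log_2(R_0/\eb)$, exactly as in the paper. One small caveat: your claim $\sum_{k=0}^{N-1}\tilde\tau_k\le 2\tau$ via \emph{total-variation} additivity is not valid for the uniformly local norm $\|\cdot\|_{M_b}=\sup_t\|\cdot\|_{M(t,t+1;H)}$ used here, but it is also not needed, since each individual $\tilde\tau_k\le 2\tau$ (restriction to a subinterval does not increase the $M_b$-norm) and the geometric weights $8^{-j}$ in your iterated recursion already yield $r_N\le 2R_0/8^{N}+\tfrac{8}{7}\eb'+O(\tau)$, which is all your argument uses.
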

\begin{proof}Let us first verify the asymptotic compactness. Indeed, let
$$
\omega_t(\Cal B)=\bigg[\cup_{\nu\in\Cal H(\mu)}\cup_{\tau\ge t}U_\nu(\tau,0)\Cal B\bigg]_{E}.
$$
Then, since $\omega_{t_1}(\Cal B)\subset\omega_{t_2}(\Cal B)$ if $t_1\ge t_2$, estimate \eqref{1.squeez} gives us the following upper bound for the Kuratowski measure of non-compactness:
$$
\kappa(\omega_t(\Cal B))\le {\frac C{16^n}},\ \ t\ge ns,\ \ n\in\Bbb N.
$$
Thus, $\lim_{t\to\infty}\kappa(\omega_t(\Cal B))=0$ and, therefore, the family $\{U_\nu(t,\tau),\nu\in\Cal H(\mu)\}$ is indeed uniformly asymptotically compact, so the strong uniform attractor exists and can be found by
$$
\Cal A^s_{un}=\cap_{t\ge0}\omega_t(\Cal B).
$$

 Let us now verify the entropy estimate. To simplify the notation, we will write from now on $\Cal A$ and $U_\nu(t)$ instead of $\Cal A_{un}^s$ and $U_\nu(t,0)$ respectively. Then, from the translation identity, we have
 $$
 U_\nu(ns)=U_{T((n-1)s)\nu}(s)\circ \cdots \circ U_\nu(s).
 $$
 Using estimates \eqref{1.squeez}, we extend the construction of  $\varepsilon$-covering for the uniform attractor given  \cite{ZeDCDS} to the non-translation compact set.
Let $B(\eb_0,\xi_0^i,E)$, $i=1,\cdots,N_0(\eb_0)$ be the initial $\eb_0$-covering of $\Cal B$. For instance, since $\Cal A\subset\Cal B$ is bounded, we may take $\eb_0=R_0$ for sufficiently large $R_0$ and $N_0(\eb_0)=1$.

 Let us fix also $k\in\Bbb N$ and the $\eb_0/(C2^{k+3})$-covering of the set $\Pr(\eb_0/2^{k+3})\Cal H(\mu)\big|_{[0,ks]}$ in the space $M_b([0,ks],H)$.
  Let $\nu_j$, $j=1,\cdots,N(\eb_0/(C2^{k+3}),\mu)$ be the centers of this covering.

Let $\xi\in\Cal A$, then, due to the representation formula \eqref{1.rep}, there exist $\xi_0\in\Cal A$ and $\eta\in\Cal H(\mu)$ such that $\xi=\xi_k=U_\eta(ks)\xi_0$. We can find the indexes $i$ and $j$ such that
\begin{equation}\label{1.net}
\|\xi_0-\xi_0^i\|_E\le\eb_0,\ \ \ \|\Pr(\eb_0/2^{k+3})\eta-\Pr(\eb_0/2^{k+3})\nu_j\|_{M_b([0,ks],H)}\le\eb_0/(C2^{k+3}).
\end{equation}
Let us consider the balls $B(\eb_0,\xi_0^i,E)$ which cover the attractor $\Cal A$. Every of these balls
is a (pre)compact set in $W$ since the embedding $E\subset W$ is compact and, therefore, can be covered by finitely many $\eb_0/{(16L)}$-balls in $W$. Let $v_0^{i}$ be the centers of these new balls. Moreover, increasing the radii of these balls twice if necessary and drop out the balls which do not intersect with the absorbing set $\Cal B$, we may assume without loss of generality that $v_{0}^{i}\in\Cal B$. Thus, we have constructed an $\eb_0/(8L)$-covering  in the metric of $W$. We also note that the number of balls in $W$ which is necessary to cover every of the balls $B(\eb_0,\xi^i_0,E)$ can be estimated by the following number:
\begin{equation}
N_{\eb_0/(16L)}(B(\eb_0,\xi_0^i,E),W)=N_{\eb_0/(16L)}(B(\eb_0,0,E),W)=N_{1/(16L)}(B(1,0,E),W):=N.
\end{equation}
Crucial for us that the number $N$ is independent of $\eb_0$ and $i$. Thus, the number of balls in the constructed $\eb_0/(8L)$-covering of $\Cal B$ in $W$ does not exceed $NN_0(\eb_0)$.
We also note that, by the construction, $\{\xi_0^{i}\}$ is  an $\eb_0$-net of $\Cal B$ in $E$ and $\{v_0^{i}\}$ is still a $2\eb_0$-net of $\Cal B$ in $E$.

Let $\xi_l=U_\eta(ls)\xi_0$, $l=1,\cdots,k$ and $\xi_1^{i,j}:=U_{\nu_j}(s)v_0^{i}$.
 Then according to the estimate \eqref{1.squeez} and inequalities  \eqref{1.net}, there are exist the indexes ${i,j}$ such that
\begin{multline}
\|\xi_1-\xi_1^{i,j}\|_E\le\frac1{16}\|\xi_0-v_0^{i}\|_E+\eb_0/2^{k+3}+L\|\xi_0-v_0^{i}\|_W+\\+
C\|\Pr(\eb_0/2^{k+3})\eta-\Pr(\eb_0/2^{k+3})\nu_j\|_{M_b([0,s],H)}\le \eb_0/8+\eb_0/8+\eb_0/8+\eb_0/8\le \eb_0/2.
\end{multline}
Note that the number of points in the constructed 1st order $\eb_0/2$-net $\{\xi_1^{i,j}\}$ does
 not exceed
 $$
 N_1(\eb_0/2)\le NN_0(\eb_0)N(\eb_0/(C2^{k+3}),\mu).
 $$

 We now repeat the above described procedure starting from the constructed $\{\xi_1^{i,j}\}$ to get the 2nd order $\eb_0/4$-net in the metric of $E$. Namely, as before, we first construct the $\eb_0/{(16L)}$-covering of every $\eb_0/2$-ball centered in $\xi_1^{i,j}$ with the centers $v_1^{i,j}$ belonging to the absorbing set $\Cal B$ in metric $W$. As before, this will be also an {$\eb_0/2$}-net of $\cup_{\nu\in\Cal H(\mu)}U_\nu(s)\Cal B$ in the metric of $E$. Here and below we keep index $j$ indicating the covering of the ball $B(\eb_0/2, \xi_1^{i,j},E)$. 
 \par
 After that we set $\xi_2^{i,j}:=U_{T(s)\nu_j}(s)=U_{\nu_j}(2s,s)v_1^{i,j}$ and again, according to estimate \eqref{1.squeez}, we can find indexes $\{i,j\}$ such that
 \begin{multline}
 \|\xi_2-\xi_2^{i,j}\|_E\le\frac1{16}\|\xi_1-v_1^{i,j}\|_E+\eb_0/2^{k+3}+L\|\xi_1-v_1^{i,j}\|_W+\\+
C\|\Pr(\eb_0/2^{k+3})\eta-\Pr(\eb_0/2^{k+3})\nu_j\|_{M_b([s,2s],H)}\le \\\le \eb_0/16+\eb_0/16+\eb_0/16+\eb_0/16\le \eb_0/4.
 \end{multline}
 Thus, the second order $\eb_0/4$-net (which actually gives the covering of $\cup_{\nu\in\Cal H(\mu)}U_\nu(2s)\Cal B$) is constructed and the number of points in this covering does not exceed
 $$
 N_2(\eb_0/4)\le N^2N_0(\eb_0)N(\eb_0/(C2^{k+3}),\mu).
 $$

 Repeating this procedure, we finally arrive at the $k$th order net $\{\xi_{k}^{i,j}\}$ which gives the $\eb_0/2^k$-covering of the set $\cup_{\nu\in\Cal H(\mu)}U_\nu(ks)\Cal B$. In particular, by the representation formula any point $\xi=\xi_k$ belongs to this set and, therefore, this is the desired $\eb_0/2^k$-net of the attractor $\Cal A$ in $E$. The number of balls in this covering is estimated by
 $$
 N_k(\eb_0/2^k)\le N^kN_0(\eb_0)N(\eb_0/(C2^{k+3}),\mu).
 $$

 We are now ready to finish the proof of the theorem. Indeed, taking the logarithm from both parts of the last inequality and using that $N_0(\eb_0)=1$, $\eb_0=R_0$, we arrive at
 \begin{multline}\label{1.end}
 \Bbb H_{R_0/2^k}(\Cal A, E)\le k\log_2N+\log_2N(R_0/(C2^{k+3}),\mu)=\\=k\log_2N+\Bbb H_{R_0/(C2^{k+3})}(\Pr(R_0/2^{k+3})\Cal H(\mu),M_b([0,ks],H)).
\end{multline}
  We claim that \eqref{1.end} implies the desired estimate \eqref{1.ent-est}. Indeed,
  consider an arbitrary $\eb\le\eb_0$ and choose $k\in\N$ so that
\begin{equation*}
\frac{R_0}{2^{k-1}}\geq\eb\geq\frac{R_0}{2^k}.
\end{equation*}
Then, $\log_2\big(\frac{R_0}{\eb}\big)\le k\le\log_2\big(\frac{2R_0}{\eb}\big)$ and putting $2^k\sim \frac{2R_0}{\eb}$ into the right-hand side of \eqref{1.end}, we arrive at
\begin{multline}
\Bbb H_\eb(\Cal A,E)\le\Bbb H_{\eb_0/2^k}(\Cal A,E)\le
\log_2N\log_2\big(\frac{2R_0}{\eb}\big)
+\\+\mathbb H_{\eb/(16C)}\(\Pr(\eb/16)\Cal H(\mu),M_b([0,s\log_2(\frac{2R_0}{\eb})],H)\).
\end{multline}
Note that replacing $2^k$ by $\frac{2R_0}{\eb}$ in the last term of \eqref{1.end} may look a bit ambiguous since we a priori do not have any monotonicity for this term. However, this is not a problem since analyzing the proof given above, we see that $2^k$ in this expression can be replaced by any number from the interval $[2^{k-1},2^k]$ (and, in particular, by {$2R_0/\eb$}) without destroying the estimates. Thus, Theorem \ref{Th1.main-ent} is proved.
\end{proof}
\begin{remark}\label{Rem1.NS} Note that the factor {$\frac1{16}$} is taken in \eqref{1.squeez} for simplicity only. A bit more accurate arguments show that any number $\kappa<1$ is admissible there. This is not important for our results, but may be crucial for the accurate analysis of the dependence of the fractal dimension of the attractors via estimate \eqref{1.ent-est}. Indeed, as known,  best estimate for it are often requires time $s$ to be small and in this case the contraction factor $\kappa$ should be close to one.
\par
More important for us is the case where $\Cal H(\mu)$ is more regular, say $\Cal H(\mu)\subset L^p_b(\R,H)$ and we have the analogue of estimate \eqref{1.squeez} with the $L_b^p([0,s],H)$-space only. In this case repeating word by word the proof of the theorem given above, we get the analogue of estimate \eqref{1.ent-est} with the space $M_b([0,s\log_2(\frac{2R_0}\eb)],H)$ replaced by $L_b^p([0,s\log_2(\frac{2R_0}\eb)],H)$.
\par
Note also that in the proved theorem we do not specify the nature of the projection operators $\Pr(\eb)$, in particular, they may be non-linear or/and discontinuous. However, in applications, they usually will be some kind of space, time or space-time smoothing linear operators.
\end{remark}

\section{Application 1. Subcritical damped wave equation with Dirichlet BC}\label{s4}

In this section, we apply the abstract result proved above to the following weakly damped wave equation:
\begin{equation}\label{2.wave}
\Dt^2 u+\gamma\Dt u-\Dx u+u+f(u)=\mu(t),\ \ \xi_u\big|_{t=\tau}=\xi_\tau,\ \ u\big|_{\partial\Omega}=0
\end{equation}
in a bounded smooth domain $\Omega\subset\R^3$ endowed with the Dirichlet boundary conditions. Here and below $\xi_u(t)$ stands for the pair of functions $\{u(t),\Dt u(t)\}$, $\gamma>0$, $f(u)\in C^1(\R)$ is a given non-linearity which satisfies the following dissipativity and growth conditions:
\begin{equation}\label{2.f}
1.\   f(u)u\ge -C,\ \ 2.\  F(u)\le \alpha f(u)u+C,\ \ 3.\  |f'(u)|\le C(1+|u|^{p-1})
\end{equation}
 with $F(u):=\int_0^uf(v)\,dv$ and some positive constants $C$ and $\alpha$ and the growth exponent $p$ satisfying  $1\le p<5$. Finally, $\mu(t)$ is a given external force which can be an $H:=L^2(\Omega)$ valued measure with respect to time. Namely, following \cite{SZUMN}, we assume that
 \begin{equation}\label{2.mu}
 \mu\in M_b^{una}(\R,H).
 \end{equation}
We take the standard energy space $E:=H^1_0(\Omega)\times L^2(\Omega)$ as the phase space for problem \eqref{2.wave}. In particular, we assume that $\xi_\tau\in E$.
\par
The well-posedness and dissipativity of problem is verified in \cite{SZUMN} in the class of the so-called Shatah-Struwe (SS) solutions, see \cite{BSS,11,60,61}. We recall that $u(t)$ is a SS-solution of problem \eqref{2.wave} if $\xi_u\in C([\tau,T],E)$ and
\begin{equation}\label{2.str}
u\in L^4(\tau,T; L^{12}(\Omega)),\ \ \text{for all}\ \ T>\tau
\end{equation}
and if it satisfies equation \eqref{2.wave} in the sense of distributions.

\begin{theorem}\label{Th2.wp} Let the above assumptions hold. Then, for every $\tau\in\R$ and $\xi_\tau\in E$, problem \eqref{2.wave} possesses a unique SS-solution $u(t)$. This solution satisfies the following dissipative energy-Strichartz estimate:
\begin{equation}\label{2.en-str-dis}
\|\xi_u(t)\|_{E}+\|u\|_{L^4(\max\{\tau,t-1\},t;L^{12}(\Omega))}\le Q(\|\xi_\tau\|_E)e^{-\alpha (t-\tau)}+Q(\|\mu\|_{M_b(\R,H)})
\end{equation}
for some monotone increasing function $Q$ and positive $\alpha$ which are independent of $u$, $\mu$, $\tau\in\R$ and $t\ge\tau$. Moreover, the following energy identity holds:
\begin{multline}\label{2.en}
\frac12\frac d{dt}\(\|\Dt u(t)\|^2_H+\|\Nx u(t)\|^2_H+\|u(t)\|^2_H+2(F(u(t)),1)-2\int_\tau^t(\Dt u(s),\mu(ds))\)+\\+\|\Dt u(t)\|^2_H=0
\end{multline}
holds for (almost) all $t\ge\tau$.
\end{theorem}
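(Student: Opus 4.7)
The plan is to establish the theorem in four stages: approximation by smooth forces, a priori estimates, passage to the limit (yielding existence together with the energy identity), and uniqueness. Since the subquintic case is well-studied for functional forcings, the main novelty to address is to carefully incorporate the measure-valued $\mu\in M_b^{una}(\R,H)$.

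First I would approximate $\mu$ by smooth functions $\mu_\delta:=\Cal S_\delta \mu\in C^\infty(\R,H)\cap L^\infty_b(\R,H)$ using the mollifier from Proposition \ref{Prop0.str}; note that the uniform non-atomicity gives $\|\mu_\delta\|_{M_b(\R,H)}\le \|\mu\|_{M_b(\R,H)}$ without blow-up. For each $\mu_\delta$, the classical Shatah--Struwe theory (Burq--Lebeau--Planchon Strichartz estimate for the wave equation on a bounded domain with Dirichlet BC, combined with Picard iteration in $C([\tau,T],E)\cap L^4(\tau,T;L^{12})$) yields a unique SS-solution $u_\delta$ in the subquintic regime $p<5$. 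The nonlinearity is handled by the standard bound $\|f(u)\|_{L^1_t L^2_x}\le C\|u\|_{L^4_t L^{12}_x}^{p-1}\|u\|_{L^\infty_t H^1_x} \cdot (\text{const})$ after interpolation.

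Next I would derive uniform dissipative estimates by testing with the multiplier $\partial_t u_\delta + \varepsilon u_\delta$ (for small $\varepsilon>0$ chosen in terms of $\gamma$). Denoting the modified energy
\begin{equation*}
\mathcal E_\varepsilon(t)=\tfrac12\big(\|\Dt u\|_H^2+\|\Nx u\|_H^2+\|u\|_H^2\big)+(F(u),1)+\varepsilon(\Dt u,u),
\end{equation*}
the assumptions \eqref{2.f} make $\mathcal E_\varepsilon$ coercive on $E$ modulo constants, and one gets $\frac{d}{dt}\mathcal E_\varepsilon+\alpha \mathcal E_\varepsilon\le C+(\Dt u+\varepsilon u)\cdot \mu_\delta(t)$ pointwise. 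A Gronwall argument, plus the local Strichartz bound bootstrapped from the linear inhomogeneous problem with RHS $\mu_\delta-u_\delta-f(u_\delta)$, produces \eqref{2.en-str-dis} for $u_\delta$ with constants independent of $\delta$.

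Then I would pass $\delta\to0$: the uniform bounds in $C([\tau,T],E)\cap L^4L^{12}$ together with Aubin--Lions compactness give, along a subsequence, $u_\delta\to u$ strongly in $C([\tau,T], L^2)\cap L^4(\tau,T;L^q)$ for any $q<12$, which is enough to identify $f(u_\delta)\to f(u)$ in the sense of distributions because $p<5$. The difficult point of this step is the measure term in the limiting energy identity: one has to define $\int_\tau^t(\Dt u(s),\mu(ds))$ for a measure valued in $H$ paired against $\Dt u\in C_w([\tau,T],H)$, and this is exactly where the weakly uniform non-atomicity \eqref{0.wna} enters, since it guarantees that $\Dt u$ is strongly continuous in time outside an at most countable set of removable jumps, and the jumps vanish thanks to non-atomicity of $\mu$. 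The identity \eqref{2.en} is then obtained by testing the approximate identities against $C^\infty_c$ cut-offs in time and taking the limit $\delta\to0$, using weak-star convergence of $\mu_\delta\to\mu$ in $M^{w^*}_{loc}(\R,H)$ combined with the strong convergence of $\Dt u_\delta$ away from the (measure-zero) set of $\mu$-atoms.

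Finally I would prove uniqueness by taking two SS-solutions $u_1,u_2$ with the same data and the same $\mu$, setting $w=u_1-u_2$, and writing the (functional) energy identity for $w$; the RHS is controlled by $\|(f(u_1)-f(u_2))\Dt w\|_{L^1}\le C(1+\|u_1\|_{L^{12}}^{p-1}+\|u_2\|_{L^{12}}^{p-1})\|\Nx w\|_{H}\|\Dt w\|_{H}$ via H\"older and Sobolev, and since both $u_i\in L^4(\tau,T;L^{12})$, Gronwall's lemma forces $w\equiv 0$. The principal obstacle, as indicated, is giving rigorous meaning to the measure-pairing in \eqref{2.en} and propagating it through the limit, which is precisely why the class $M_b^{una}$ has been introduced in Section \ref{s.2n}.
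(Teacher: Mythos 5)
The paper does not actually prove Theorem \ref{Th2.wp}: it delegates the proof to \cite{SZUMN} (where the harder quintic periodic case is treated) and only records that the argument hinges on the energy-to-Strichartz estimate \eqref{2.en-str}, which in the subcritical regime is obtained from the linear Strichartz theory on bounded domains (cf.\ \cite{BSS}) by perturbation. Your outline follows the same standard route --- mollify $\mu$, solve with smooth forcing, derive uniform energy--Strichartz bounds, pass to the limit, prove uniqueness --- and the a priori estimates you propose (the multiplier $\Dt u+\varepsilon u$, the subcritical control of $f(u)$ in $L^1_tL^2_x$ via $L^4_tL^{12}_x$, the Gronwall argument for uniqueness) are correct in substance.

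The genuine gap is in the limit passage. Since $\mu_\delta\to\mu$ only in the local weak-star topology of measures (mollifications of a measure that is not time-regular do \emph{not} converge in the $M_b(\R,H)$-norm, by \eqref{0.t-crit}), the difference equation for $u_\delta-u_{\delta'}$ yields no Cauchy property in $C([\tau,T],E)$; and compactness gives strong convergence of $u_\delta$ in $C([\tau,T],L^2)$ but only weak-$*$ (or $C([\tau,T],H^{-s})$, $s>0$) convergence of $\Dt u_\delta$, because $\Dt^2 u_\delta$ is bounded merely in $L^\infty(\tau,T;H^{-1})+M_b(\R,H)$ with no spatial gain. Consequently (i) you cannot conclude from soft compactness that the limit satisfies $\xi_u\in C([\tau,T],E)$ in the \emph{strong} topology, which is part of the definition of an SS-solution, and (ii) you cannot pass to the limit in $\int\psi(t)(\Dt u_\delta(t),\mu_\delta(t))\,dt$, since this is a pairing of two sequences each of which converges only weakly(-star); your assertion that $\Dt u_\delta$ converges strongly ``away from the atoms of $\mu$'' is precisely the point that needs proof and does not follow from non-atomicity alone. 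The standard repair (and the one used in \cite{SZUMN}) is to establish the energy identity \eqref{2.en} \emph{directly for the limit solution} from its own regularity: $\xi_u\in L^\infty(\tau,T;E)$ together with \eqref{2.str} makes $f(u)\Dt u$ integrable, uniform non-atomicity of $\mu$ gives $\Dt u\in C([\tau,T],H)$ so that $(\Dt u(s),\mu(ds))$ is a well-defined pairing of $C([\tau,t],H)$ with $M([\tau,t],H)$, and the identity is justified by mollifying the solution itself in time. The energy identity then serves to upgrade weak convergence of $\xi_{u_\delta}(t)$ to convergence of norms, hence to strong convergence and to \eqref{2.en-str-dis}. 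Without such a device the existence part of your argument does not close.
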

The proof of this theorem is given in \cite{SZUMN} in a more complicated case of critical non-linearity $f$ and periodic boundary conditions and is strongly based on the following energy-to-Strichartz estimate for the SS-solutions of \eqref{2.wave}:
\begin{equation}\label{2.en-str}
\|u\|_{L^4(t,t+1;L^{12}(\Omega))}\le Q(\|\xi_u(t)\|_E)+Q(\|\mu\|_{M([t,t+1],H)})
\end{equation}
for some monotone function $Q$ which is independent of $t$, $\mu$ and $u$. This estimate can be deduced from the analogous estimate for the linear equation ($f=0$) by elementary perturbation  methods if $f(u)$ is subcritical (which is assumed in this section), see \cite{SZUMN} for any reasonable boundary conditions, but becomes rather delicate in the critical case. Actually, for the case where $f$ has a quintic growth rate, it is known for periodic BC or the case of whole space only, see \cite{SZUMN,MSSZ} and also \cite{BG,Tao} (or \cite{BLP,11} for the {\it autonomous} critical case)  and this is the main reason why we assume that $p<5$ in this section.
\par
Note also that the assumption for the measure $\mu$ to be non-atomic is necessary in order to have $\Dt u\in C(t,t+1;H)$. As shown in \cite{SZUMN} without this assumption the trajectories of \eqref{2.wave} may be discontinuous in time, the corresponding dynamical process also becomes discontinuous with respect to the external force $\mu$ and, as a result, the key representation formula \eqref{1.rep} may also be lost.
\par
Let us now turn to the attractors. Following the general scheme, see e.g. \cite{CV}, we consider the family of equations of the form \eqref{2.wave}
\begin{equation}\label{2.wave1}
\Dt^2 u+\gamma\Dt u-\Dx u+u+f(u)=\nu, \ \xi_u\big|_{t=\tau}=\xi_\tau,\ u\big|_{\partial\Omega}=0
\end{equation}
with external forces $\nu$ belonging to the hull $\Cal H(\mu)$ of the initial external force $\mu$:
\begin{equation}\label{2.hull}
\nu\in\Cal H(\mu):=\big[T(h)\mu,\ h\in\R\big]_{M_{loc}^{w^*}(\R,H)}.
\end{equation}
Then, according to Theorem \ref{Th2.wp}, the solution operators of equations \eqref{2.wave1} generate a family of dynamical processes $\{U_\nu(t,\tau),\nu\in\Cal H(\mu)\}$ in the energy phase space $E$ which satisfy the translation identity  \eqref{1.trans} as well as the dissipative estimate \eqref{1.dis} (thanks to \eqref{2.en-str-dis}). Moreover, the hull $\Cal H(\mu)$ is compact in the weak star topology by the Banach-Alaoglu theorem and as it is straightforward to verify the maps
$(\xi,\nu)\to U_\nu(t,\tau)\xi$ are continuous as maps from $E^w\times\Cal H(\mu)$ to $E^w$ for every fixed $t$ and $\tau$. Thus, due to Theorem \ref{Th1.exist} and Corollary \ref{Cor1.rep}, we have the following result.
\begin{corollary}\label{Cor2.weak} Let the assumptions of Theorem \ref{Th2.wp} hold. Then the family $\{U_\nu(t,\tau),\nu\in\Cal H(\mu)\}$ possesses a weak uniform attractor $\Cal A_{un}^w$ in $E$ (see Definition \ref{Def1.attr}) and the representation formula \eqref{1.rep} holds.
\end{corollary}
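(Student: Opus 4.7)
The plan is to verify the hypotheses of Theorem \ref{Th1.exist} and Corollary \ref{Cor1.rep} for the family $\{U_\nu(t,\tau), \nu\in\Cal H(\mu)\}$, since the conclusion of Corollary \ref{Cor2.weak} is then immediate. Four items have to be checked: (i) $\Cal H(\mu)$ is compact in the weak-star topology of $M_b^{una}(\R,H)$; (ii) the translation identity \eqref{1.trans}; (iii) the uniform dissipative estimate \eqref{1.dis}; and (iv) the sequential continuity of the map $(\xi,\nu)\mapsto U_\nu(t,\tau)\xi$ from $E^w\times\Cal H(\mu)$ into $E^w$. Items (i)--(iii) are either quoted or essentially built in; the real work lies in (iv).

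For (i), by the Banach--Alaoglu theorem (applied to $M_{loc}^{w^*}(\R,H)$ using the duality $M([a,b],H)=C([a,b],H^*)^*$ and a diagonal procedure), any bounded net of translates of $\mu$ has a convergent subnet, so $\Cal H(\mu)$ is compact; the fact that it is contained in $M_b^{una}(\R,H)$ is preserved in the limit because the bound \eqref{0.wna} is stable under weak-star convergence. Item (ii) is clear from the autonomy of the left-hand side of \eqref{2.wave1}: if $u$ solves \eqref{2.wave1} with $\nu$ on $[\tau+h,t+h]$, then $\tilde u(\cdot):=u(\cdot+h)$ solves it with $T(h)\nu$ on $[\tau,t]$. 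For (iii), since $\|\nu\|_{M_b(\R,H)}\le \|\mu\|_{M_b(\R,H)}$ for every $\nu\in\Cal H(\mu)$, the dissipative energy-Strichartz estimate \eqref{2.en-str-dis} yields the uniform bound \eqref{1.dis} in $E$ with a single $Q$ and $\alpha$.

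The main obstacle is (iv). Fix $\tau\le t$ and take sequences $\xi_n\rightharpoonup\xi_\infty$ in $E$ and $\nu_n\to\nu_\infty$ in the weak-star local topology of $M_b^{una}(\R,H)$. By (iii), the corresponding SS-solutions $u_n$ are uniformly bounded in $L^\infty(\tau,t;H^1_0)\cap W^{1,\infty}(\tau,t;L^2)$, and by \eqref{2.en-str-dis} also in $L^4(\tau,t;L^{12}(\Omega))$. Using the Aubin--Lions lemma together with the compact embeddings $H^1_0\subset\subset L^q(\Omega)$ for all $q<6$, we extract a subsequence along which $u_n\to u_\infty$ strongly in $C([\tau,t],L^q(\Omega))$ for any $q<6$ and $\xi_{u_n}(t)\rightharpoonup \xi_{u_\infty}(t)$ in $E$. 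Because $p<5$, the growth condition in \eqref{2.f} gives $|f(u_n)-f(u_\infty)|\le C(1+|u_n|^{p-1}+|u_\infty|^{p-1})|u_n-u_\infty|$; H\"older with $q$ slightly below $6$ then shows $f(u_n)\to f(u_\infty)$ in $L^1_{loc}([\tau,t]\times\Omega)$. Passing to the limit in the distributional form of \eqref{2.wave1} (the linear terms by weak convergence, the forcing by the weak-star convergence $\nu_n\to\nu_\infty$ tested against $C^\infty_0$ space-time test functions in $C([\tau,t],H^*)$), we see that $u_\infty$ is an SS-solution with datum $\xi_\infty$ and force $\nu_\infty$. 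By the uniqueness part of Theorem \ref{Th2.wp}, $u_\infty=U_{\nu_\infty}(\cdot,\tau)\xi_\infty$, and a standard subsequence-subsequence argument upgrades convergence of the subsequence to convergence of the full sequence in $E^w$. This establishes (iv).

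With the four items in hand, Theorem \ref{Th1.exist} yields the weak uniform attractor $\Cal A^w_{un}\subset E$, and Corollary \ref{Cor1.rep} immediately delivers the kernel representation \eqref{1.rep}.
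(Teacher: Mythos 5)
Your proposal is correct and follows essentially the same route as the paper: the paper likewise obtains the corollary by checking the hypotheses of Theorem \ref{Th1.exist} and Corollary \ref{Cor1.rep} (translation identity and uniform dissipativity from Theorem \ref{Th2.wp} and \eqref{2.en-str-dis}, compactness of the hull by Banach--Alaoglu, and weak continuity of $(\xi,\nu)\mapsto U_\nu(t,\tau)\xi$, which it declares ``straightforward to verify''). Your Aubin--Lions/uniqueness argument simply fills in that last continuity step explicitly, which is consistent with the paper's intent.
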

The main aim of this section is to verify that the constructed attractor is actually strong and to get the upper bounds for its Kolmogorov's entropy. It is known (see counterexamples in \cite{ZelDCDS}) that condition \eqref{2.mu} is not enough for the compactness of $\Cal A^w_{un}$ in a strong topology of $E$, so we need to put some extra assumptions on the initial external force $\mu$. Namely, we will assume in addition that the measure $\mu$ is weakly regular in the sense of Definition \ref{Def0.wreg}:
\begin{equation}\label{2.w-r}
\mu\in M_b^{w-reg}(\R,H)\cap M_b^{una}(\R,H).
\end{equation}
We are planning to use the abstract Theorem \ref{Th1.main-ent}, so we need to specify the projector operators $\Pr(\eb)$. To this end, we introduce the smoothing in time operators $S_\delta$ by
  \begin{equation}\label{2.S}
  (S_\delta h)(t):=\frac1{\delta}\int_t^{t+\delta}h(s)\,ds
  \end{equation}
  and will use the following projectors
  \begin{equation}\label{2.pr}
  \Pr (\eb):=S_{\delta(\eb)}\circ P_{N(\eb)}
  \end{equation}
where $\delta(\eb)$ is small enough and $N(\eb)$ is large enough.  Here and below we fix an orthonormal base $\{e_n\}_{n=1}^\infty$ generated by the eigenvectors of the Dirichlet-Laplacian in $H$ and use the orthoprojectors $P_N$ and $Q_N$ which correspond to the first $N$ eigenvectors and the rest of them respectively.
\par
We are now ready to state and prove the main result of this section.

\begin{theorem}\label{Th2.main-ent} Let the assumptions of Theorem \ref{Th2.wp} hold and let, in addition, the initial external force $\mu$ satisfy \eqref{2.w-r}. Then the family of processes $\{U_\nu(t,\tau),\nu\in\Cal H(\mu)\}$ generated by equations \eqref{2.wave1} possess a strong uniform attractor $\Cal A_{un}=\Cal A^s_{un}=\Cal A_{un}^w$. Moreover, its Kolmogorov's entropy possesses the following estimate:
\begin{equation}\label{2.ent-1}
\Bbb H_\eb(\Cal A_{un},E)\le D\log_2(\frac{2R_0}\eb)+\Bbb H_{\eb/K}(S_{\delta(\eb)}P_{N(\eb)}\Cal H(\mu), M_b([0,L\log_2(\frac{2R_0}\eb)], H)),
\end{equation}
where $D$, $R_0$, $K$ and $L$ are some constants and $\delta(\eb)$ and $N(\eb)$ are some functions of $\eb$ which can be explicitly found if $\mu$ is given.
\end{theorem}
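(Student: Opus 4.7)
The plan is to apply the abstract Theorem~\ref{Th1.main-ent} to the family $\{U_\nu(t,\tau),\nu\in\Cal H(\mu)\}$ associated with \eqref{2.wave1}, taking the projectors $\Pr(\eb)=S_{\delta(\eb)}\circ P_{N(\eb)}$ from \eqref{2.pr} together with an auxiliary space $W:=H^{1-\sigma}(\Omega)\times H^{-\sigma}(\Omega)$ for small $\sigma\in(0,1/2)$ (so that $E\subset W$ is compact by Rellich). The existence of the weak uniform attractor and the representation formula \eqref{1.rep} are already supplied by Corollary~\ref{Cor2.weak}, so only three ingredients remain to be verified: a positively invariant absorbing ball, precompactness of $\Pr(\eb)\Cal H(\mu)$ in the strong topology of $M_b^{una}(\R,H)$, and the squeezing inequality \eqref{1.squeez}.

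The existence of a ball $\Cal B\subset E$ and a time $s$ with $U_\nu(s,0)\Cal B\subset\Cal B$ uniformly in $\nu\in\Cal H(\mu)$ follows directly from the dissipative energy--Strichartz estimate \eqref{2.en-str-dis}. For the precompactness, once $\delta(\eb)>0$ and $N(\eb)\in\N$ are fixed, the elements of $S_{\delta(\eb)}P_{N(\eb)}\Cal H(\mu)$ take values in the finite-dimensional subspace $P_{N(\eb)}H$, are uniformly bounded in $C_b(\R,H)$ by a multiple of $\delta(\eb)^{-1}\|\mu\|_{M_b(\R,H)}$, and are uniformly Lipschitz in $t$ with Lipschitz constant of the same order; Arzel\`a--Ascoli then yields precompactness on each compact time interval, which is all that is needed for the entropy term in \eqref{1.ent-est} to be finite.

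The core of the proof is the smoothing estimate \eqref{1.squeez}. Writing $v:=u_1-u_2$, split $v=w+z$ with $w$ the solution of the linear homogeneous damped wave equation with data $\xi_v(0)$ and $z$ the solution of the inhomogeneous equation with zero initial data and source $-(f(u_1)-f(u_2))+(\nu_1-\nu_2)$. The exponential decay of the linear damped wave semigroup gives $\|\xi_w(s)\|_E\le Ce^{-\alpha s}\|\xi_v(0)\|_E$, which is absorbed into $\tfrac{1}{16}\|\xi_v(0)\|_E$ by taking $s$ large enough. The subcriticality $p<5$ combined with the Strichartz bound \eqref{2.en-str} on $u_1,u_2\in\Cal B$ allows the nonlinear contribution to be controlled (via H\"older, Sobolev interpolation and a Gr\"onwall argument on the weaker $W$-norm, in the spirit of the standard quasi-stability technique), producing the compact term $L\|\xi_v(0)\|_W$ in \eqref{1.squeez}.

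The technical heart and main obstacle is the treatment of the measure-valued force difference via the weak regularity assumption \eqref{2.w-r}. Decomposing $\nu_1-\nu_2=\Pr(\eb)(\nu_1-\nu_2)+(I-\Pr(\eb))(\nu_1-\nu_2)$, the projected part contributes $C\|\Pr(\eb)(\nu_1-\nu_2)\|_{M_b([0,s],H)}$ via the standard pairing with the energy multiplier $\Dt z$. To bound the residual by $\eb$ uniformly in $\nu_i\in\Cal H(\mu)$, we invoke the uniform version of the decomposition \eqref{0.good} over the hull: each $\nu_i$ can be written $\nu_i=\nu_i^t+\nu_i^s+r_i$ with $\nu_i^t\in W^{1,2}_b(\R,H)$ and $\nu_i^s\in M_b^{una}(\R,H^1_0(\Omega))$ (both with bounds uniform in $\nu_i$) and $\|r_i\|_{M_b(\R,H)}\le\eb/C$. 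Then $(I-S_{\delta(\eb)})\nu_i^t$ is small in $M_b(\R,H)$ for $\delta(\eb)$ small (by Sobolev regularity in time) and $(I-P_{N(\eb)})\nu_i^s$ is small for $N(\eb)$ large (by compactness of $H^1_0\hookrightarrow H$), while the uniform non-atomicity guarantees that the coupling of these residual measures with $\Dt z\in L^2$ remains controlled. Choosing $\delta(\eb)$ small and $N(\eb)$ large enough and combining all pieces yields \eqref{1.squeez}, after which Theorem~\ref{Th1.main-ent} delivers the entropy bound \eqref{2.ent-1}.
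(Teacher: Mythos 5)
Your overall strategy coincides with the paper's: verify the hypotheses of Theorem~\ref{Th1.main-ent} with $\Pr(\eb)=S_{\delta(\eb)}\circ P_{N(\eb)}$ and $W=E^{-\sigma}=H^{1-\sigma}_0(\Omega)\times H^{-\sigma}(\Omega)$, the absorbing ball coming from \eqref{2.en-str-dis}, the precompactness of $\Pr(\eb)\Cal H(\mu)$ from finite dimensionality plus uniform Lipschitz continuity in time, and the nonlinear term controlled through the subcritical bound of $\|l(t)\tilde w\|_{L^2}$ by the $E^{-\sigma}$-norm combined with a Lipschitz estimate in $E^{-\sigma}$. Your reorganization of the contraction (homogeneous linear flow of the initial difference plus a Duhamel term) is an acceptable variant of the paper's Gronwall argument on the damped energy functional.

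There is, however, a genuine gap in your treatment of the residual $(I-\Pr(\eb))(\nu_1-\nu_2)$, which is exactly the technical heart of the theorem (Lemma~\ref{Lem2.key}). Writing $\nu=\nu^t+\nu^s+r$ as in \eqref{0.good} and using $1-S_\delta\circ P_N=(1-S_\delta)\circ P_N+Q_N$, the residual contains the cross terms $Q_N\nu^t$ and $(1-S_\delta)P_N\nu^s$. Neither is small in $M_b(\R,H)$: $\nu^t\in W^{1,2}_b(\R,H)$ has no spatial regularity, so $\|Q_N\nu^t\|_{M_b(\R,H)}$ need not vanish as $N\to\infty$; and $\nu^s$ is merely non-atomic in time, so $\|(1-S_\delta)P_N\nu^s\|_{M_b(\R,H)}$ need not vanish as $\delta\to0$. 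Your bounds only cover $(1-S_\delta)\nu^t$ and $Q_N\nu^s$. The fallback claim that uniform non-atomicity controls the pairing of the residual measure with $\Dt z$ also fails: for a measure of order one in $M_b$, that pairing is of order one regardless of non-atomicity. The correct mechanism, missing from your sketch, is to measure smallness at the level of the \emph{solution} of the auxiliary linear damped wave equation driven by the residual: the regular parts $\nu^t+\nu^s$ produce a solution bounded in $E^1$, whose $Q_N$-projection is then $O(\lambda_N^{-1/2})$ in $E$ (this is how $Q_N\nu^t$ is absorbed), while the remaining finite-dimensional force $(1-S_\delta)P_N(\nu^t+\nu^s)$ is handled by reducing to $N$ scalar ODEs, expressing $(1-S_\delta)\tilde v_n$ through the explicit convolution formula \eqref{2.sd}, and invoking the uniform non-atomicity \eqref{2.mod} only at that point to fix $\delta=\delta(\eb)$. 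Without this two-step argument your verification of \eqref{1.squeez} is incomplete.
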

\begin{proof} In order to apply Theorem \ref{Th1.main-ent}, we only need to verify estimate \eqref{1.squeez} on the uniformly absorbing ball $\Cal B\subset E$. We start with the following
key lemma which will be used in the critical case as well.
\begin{lemma}\label{Lem2.key} Let the external force $\mu$ satisfy \eqref{2.w-r}. Then, for every $\eb>0$ there exist $\delta(\eb)$ and $N(\eb)$ such that, for every $\nu\in\Cal H(\mu)$ and $\tau\in\R$,
the solution of the following linear problem:
\begin{equation}\label{2.lin}
\Dt^2 v+\gamma\Dt v-\Dx v+v=(1-\Pr(\eb))\nu,\ \ \xi_v\big|_{t=\tau}=0
\end{equation}
satisfies the estimate
\begin{equation}\label{2.small}
\|\xi_v(t)\|_E +\|v\|_{L^4(t,t+1;L^{12}(\Omega))}\le \eb,\ \ t\ge\tau.
\end{equation}
\end{lemma}
\begin {proof}[Proof of the lemma] We first note that using the obvious estimate for the solutions of linear homogeneous equation \eqref{2.lin} in $E$, we can reduce the proof of estimate \eqref{2.small} to the case where $\xi_v(t)$, $t\in\R$ is a unique bounded in time solution of \eqref{2.lin}, defined for all $t\in\R$. Thus, we get rid of the parameter $\tau$ in our problem.
\par
We now use the assumption that $\mu$ is weakly regular. Then, due to \eqref{0.good}, for any $\eb>0$   any measure $\nu\in\Cal H(\mu)$ can be presented in the form
$$
\nu=\nu_\eb+h_{time}+h_{sp},
$$
where
$$
\|\nu_\eb\|_{M_b(\R,H)}\le \eb,\ \ h_{time}\in H^1_b(\R,H),\ \ h_{sp}\in M_b^{una}(\R,H^1_0(\Omega))
$$
and this splitting is uniform with respect to $\nu\in\Cal H(\mu)$. Let $v=v_1+v_2+v_3$ be the corresponding splitting of the solution $v$ of problem \eqref{2.lin}. Then, since
$$
\|(1-\Pr(\eb))\nu_\eb\|_{M_b}\le\|\nu_\eb\|_{M_b}\le \eb,
$$
the standard energy and Strichartz estimates for \eqref{2.lin} gives us
\begin{equation}
\|\xi_{v_1}(t)\|_E+\|v_1\|_{L^4(t,t+1;L^{12})}\le C\eb,\ \ t\in\R,
\end{equation}
where $C$ is independent of $\eb$ and $\nu\in\Cal H(\mu)$. Analogously, since
$\Dt h_{time}\in L^2_b(\R,H)$ and $\Nx h_{sp}\in M_b(\R,H)$, the proper energy estimate for \eqref{2.lin} gives also that
$$
\|\xi_{v_2+v_3}\|_{E^1}\le C,
$$
where $C=C_\eb$ may depend on $\eb$, but is independent of $\nu\in\Cal H(\mu)$
and
$$
E^1:=(H^2(\Omega)\cap H^1_0(\Omega))\times H^1_0(\Omega).
$$
Thus, we may find $N=N(\eb)$ such that
\begin{equation}
\|Q_{N(\eb)}\xi_{v_2(t)+v_3(t)}\|_{E}+\|Q_{N(\eb)}(v_2+v_3)\|_{L^4(t,t+1;L^{12})}\le \eb.
\end{equation}
The Strichartz part of this estimate is also straightforward due to the embeddings
$$
H^2\subset H^{1+1/4}_0\subset L^{12}.
$$
Let us fix such $N=N(\eb)$ in the projector $\Pr(\eb)$. Then, since
\begin{equation}\label{1.pre}
1-\Pr(\eb)=1-S_\delta\circ P_N= P_N+Q_N-S_\delta\circ P_N=(1-S_\delta)\circ P_N+Q_N,
\end{equation}
we only need to verify estimate \eqref{2.small} for the solution  $\bar v$ of \eqref{2.lin} with the right-hand side  $(1-S_\delta)P_N\bar\nu$, where $\bar\nu=h_{time}+h_{sp}$. Furthermore, since both $h_{sp}$ and $h_{time}$ are weakly non-atomic, we conclude that $\bar \nu$ is also weakly non-atomic. The latter means in particular that
\begin{equation}\label{2.mod}
\lim_{\delta\to0}\sup_{t\in\R}\int_{t}^{t+\delta}\bar\nu_n(ds)=0,\ \ \bar\nu_n:=(\bar\nu,e_n).
\end{equation}
Note that $\bar v=\sum_{n=1}^N\bar v_ne_n$, where $\bar v_n$ solve scalar ODEs
\begin{equation}\label{2.ODE}
\bar v_n''(t)+\gamma \bar v_n'(t)+\lambda_n \bar v_n(t)+\bar v_n(t)=(1-S_\delta)\bar\nu_n,\ \ n=1,\cdots, N
\end{equation}
and $N=N(\eb)$ is now fixed. Thus, we only need to prove that the solution of a single ODE \eqref{2.ODE} can be made arbitrarily small by fixing $\delta=\delta(\eb)$ small enough. Note also that we actually need to verify the energy part of estimate \eqref{2.small} for $\bar v$ only (since the solution is finite dimensional, the Strichartz estimate will follow from the energy one). Let now $\tilde v_n$ be the bounded in time solution of
 \begin{equation}\label{2.lin1}
 \tilde v_n''(t)+\gamma \tilde v_n'(t)+\lambda_n \tilde v_n(t)+\tilde v_n(t)=\bar\nu_n, \ \ t\in\R.
\end{equation}
 Then, due to the energy estimate, we know that $\|\tilde v_n\|_{C^1_b(\R)}\le C$, where the constant $C$ may depend on $\eb$, but is independent of $t$. On the other hand, since $(1-S_\delta)$ is a convolution operator, it commutes with temporal derivatives and, therefore,
 $$
 \bar v_n=(1-S_\delta)\tilde v_n.
 $$
To estimate $\bar v_n$, we recall that
 \begin{multline}\label{2.sd}
((1-S_\delta)h)(t)=\frac1\delta\int_t^{t+\delta}\int_s^th'(\tau)\,d\tau\,ds=
\\
=-\frac1\delta\int_t^{t+\delta} \int^{t+\delta}_\tau h'(\tau)\,ds\,d\tau
 =-\frac1\delta\int_t^{t+\delta} (t+\delta-\tau)h'(\tau)\,d\tau.
 \end{multline}
 From this formula we immediately see that
 $$
 |(1-S_\delta)\tilde v_n(t)|\le \delta\|\tilde v_n'\|_{C_b}\le C\delta,
 $$
so this term can be easily made small by the choice of $\delta$. In order to get the analogous estimate for $\bar v_n'$, we express the second derivative $\tilde v_n''(t)$ from equation \eqref{2.lin1} and use
\eqref{2.sd} again to get
\begin{multline}\label{2.mest}
|(1-S_\delta)\tilde v_n'(t)|\le \delta(\gamma\|\tilde v_n'\|_{C_b}+(\lambda_n+1)\|\tilde v_n\|_{C_b})+\\+\frac1\delta\int_t^{t+\delta}
 \bigg|\int^{t}_s \bar\nu_n(d\tau)\bigg|\,ds\le C(\gamma+(\lambda_n+1))\delta+\sup_{|t-s|\le\delta}\big|\int_s^{t}\bar \nu_n(d\tau)\big|
\end{multline}
and to see that this term is also can be made arbitrarily small by the proper choice of $\delta=\delta(\eb)$ (due to the uniform non-atomicity \eqref{2.mod}). Thus, we are able to fix $\delta=\delta(\eb)$ in such a way that \eqref{2.small} will be satisfied and the lemma is proved.
\end{proof}

We now return to the proof of the theorem. We recall that the functions $\Pr(\eb)\nu$, $\nu\in\Cal H(\mu)$ are finite-dimensional in $x$ and even uniformly continuous in time, therefore, the projection
$\Pr(\eb)\Cal H(\mu)$ is indeed pre-compact in $M_{loc}(\R,H)$ for every $\eb>0$. Thus, Lemma \ref{Lem2.key} allows us to control the impact of the non-translation compact part of the external force $\mu$, and the rest of the proof of \eqref{1.squeez} is similar to \cite{CV,ZeDCDS}. For the convenience of the reader, we present the details below.
 \par
Let $u_1,u_2$ be two solutions of \eqref{2.wave1} starting from the absorbing set $\Cal B$ which correspond to different external forces $\nu_1,\nu_2\in\Cal H(\mu)$ and different initial data $\xi_{\tau_1},\xi_{\tau_2}\in\Cal B$ and let $w(t)=u_1(t)-u_2(t)$. Then this function solves the linear problem
\begin{equation}\label{2.w-dif}
\Dt^2w+\gamma\Dt w-\Dx w+w+l(t)w=\bar \nu:=\nu_1-\nu_2, \ \xi_{w}\big|_{t=\tau}=\bar\xi_\tau:=\xi_{\tau_1}-\xi_{\tau_2},
\end{equation}
where $l(t):=\int_0^1f'(su_1(t)+(1-s)u_2(t))\,ds$.  At first step we fix $\eb>0$ and get rid of the non-translation compact component of the external forces $\bar\nu$ by subtracting the solution $v=v_\eb(t)$ of the linear equation \eqref{2.lin} where $\nu$ is replaced by $\bar\nu$. Then the reminder $\tilde w(t)=w(t)-v(t)$ solves
\begin{equation}\label{2.w-dif2}
\Dt^2\tilde w+\gamma\Dt \tilde w-\Dx \tilde w+\tilde w+l(t)\tilde w=\tilde \nu:=\Pr(\eb)\bar \nu-l(t)v, \ \xi_{\tilde w}\big|_{t=\tau}=\bar\xi_\tau:=\xi_{\tau_1}-\xi_{\tau_2}.
\end{equation}
Then, according to Lemma \ref{Lem2.key}, $\|\xi_v\|_{E}\le \eb$. Moreover, using the dissipative estimate \eqref{2.en-str-dis} together with the growth restriction $|f'(u)|\le C(1+|u|^4)$ (the critical growth exponent is allowed here) and the Sobolev embedding $H^1\subset L^6$, we conclude that
\begin{multline}\label{2.lv}
\|lv\|_{L^1(t,t+1;H)}\le C\|l(t)\|_{L^1(t,t+1;L^3)}\|v\|_{L^\infty(t,t+1;L^6)}\le \\\le
C\(1+\|u_1\|^4_{L^4(t,t+1;L^{12})}+\|u_2\|^4_{L^4(t,t+1;L^{12})}\)\|\xi_v\|_{L^\infty(t,t+1;E)}\le C\eb
\end{multline}
for some constant $C$ which is independent of $\eb$. Thus, the term $lv$ is under the control.
\par
At second step, we utilize the standard Lipschitz continuity for $U_{\nu}(t,\tau)$ in the negative energy space $E^{-\sigma}:=H^{1-\sigma}_0(\Omega)\times H^{-\sigma}(\Omega)$ where $\sigma$ is a sufficiently small positive number. This estimate also holds for the critical growth rate as well and in order to get it, we need to multiply equation \eqref{2.w-dif2} by $(-\Dx)^{-\sigma}\Dt\tilde w$. This gives
\begin{equation}
\frac12\frac d{dt}\Cal E(\tilde w)+\gamma\|\Dt\tilde w(t)\|^2_{H^{-\sigma}}+
(l(t)\tilde w,(-\Dx)^{-\sigma}\Dt\tilde w)=(\tilde\nu,(-\Dx)^{-\sigma}\Dt\tilde w),
\end{equation}
where $\Cal E(\tilde w):=\|\xi_{\tilde w}(t)\|_{E^{-\sigma}}^2+\|\tilde w\|_{H^{-\sigma}}^{2}$.
Let us estimate the most complicated term containing $l(t)$ using the H\"older inequality with the appropriate Sobolev's embeddings. Then, analogously to \eqref{2.lv}, we have
\begin{multline}\label{2.lw}
|(l(t)\tilde w,(-\Dx)^{-\sigma}\Dt\tilde w)|\le\|l(t)\|_{L^3}\|\tilde w\|_{L^{\frac6{1+2\sigma}}}\|(-\Delta)^{-\sigma}\Dt\tilde w\|_{L^{\frac6{3-2\sigma}}}\le\\\le
C\|l(t)\|_{L^3}\|\tilde w\|_{H^{1-\sigma}}\|\Dt\tilde w\|_{H^{-\sigma}}\le C\|l(t)\|_{L^3}\|\xi_{\tilde w}\|^2_{E^{-\sigma}}.
\end{multline}
Inserting this estimate to  the previous identity, using the H\"older inequality for estimating the RHS and dividing the obtained inequality by $\Cal E^{\frac{1}{2}}(\tilde w)$, we arrive at
$$
\frac d{dt}\Cal E^{\frac{1}{2}}(\tilde w)-C\Cal E^{\frac{1}{2}}(\tilde w)\|l(t)\|_{L^3}\le \|\tilde\nu(t)\|_{H}
$$
and applying the Gronwall inequality and using \eqref{2.lv} and the fact that $\|l\|_{L^1(L^3)}$ is under the control, we get the desired estimate
\begin{equation}\label{2.lip-s}
\|\xi_{\tilde w}(t)\|_{E^{-\sigma}}\le Ce^{K(t-\tau)}\(\eb+\|\xi_{\tilde w}(\tau)\|_{E^{-\sigma}}+\|\Pr(\eb)\bar \nu\|_{L_b^1(\tau,t;H)}\),
\end{equation}
for some constants $K$ and $C$ which are independent of $\eb$, $t$,$\tau$ and $u_1,u_2$.
Note that this estimate remains valid for the critical case as well. We also mention that the measure $\Pr(\eb)\bar\nu$ is absolutely continuous, so we may write $L^1_b(H)$ instead of $M_b(H)$.
\par
Finally, at step 3 we write out the energy estimate in $E$ for equation \eqref{2.w-dif2} treating the term $l(t)\tilde w$ as a perturbation. In contrast to the previous estimates, we will crucially use here that the non-linearity $f$ has a subcritical growth rate (in the next section, we remove these restrictions using more delicate arguments). The sub-criticality assumption $p<5$ allows us to improve estimate \eqref{2.lv} as follows:
\begin{multline}\label{2.subcrit}
\|l(t)\tilde w\|_{L^2}\le \|l(t)\|_{L^{\frac{12}{p-1}}}\|\tilde w\|_{L^{\frac{12}{7-p}}}\le C\(1+\|u_1(t)\|_{L^{12}}^4+\|u_2(t)\|_{L^{12}}^4\)\|\tilde w\|_{H^{1-\sigma}}\le\\\le
C\(1+\|u_1(t)\|_{L^{12}}^4+\|u_2(t)\|_{L^{12}}^4\)\|\xi_{\tilde w}\|_{E^{-\sigma}},
\end{multline}
where $\sigma=\sigma(p)>0$ due to the sub-criticality assumption.
\par
Then, multiplying equation \eqref{2.w-dif2} by $\Dt\tilde w+\alpha\tilde w$ and arguing in a standard way, we get the inequality
\begin{multline}\label{2.gron-m}
\frac12\frac d{dt}\tilde {\Cal E}(\tilde w)+\beta\tilde {\Cal E}(\tilde w)\le C\(1+\|u_1(t)\|_{L^{12}}^4+\|u_2(t)\|_{L^{12}}^4\)\|\xi_{\tilde w}\|_{E^{-\sigma}}\tilde {\Cal E}^{\frac{1}{2}}(\tilde w)+C\|\tilde\nu(t)\|_{L^2}\tilde {\Cal E}^{\frac{1}{2}}(\tilde w)
\end{multline}
for some positive constant $\beta$. Here
$$
\tilde {\Cal E}(w):=\|\Dt w\|_{L^{2}}^{2}+\|\Nx w\|_{L^{2}}^{2}+\|w\|_{L^{2}}^{2}+\gamma\alpha\| w\|_{L^{2}}^{2}+2\alpha(\Dt w,w).
$$
For sufficiently small $\alpha>0$, we have
$$
\frac{1}{2}\|\xi_{\tilde w}(t)\|_{E}^{2}\le\tilde {\Cal E}(\tilde w).
$$
Applying the Gronwall inequality to this relation and using \eqref{2.lip-s} together with \eqref{2.lv} and \eqref{2.small}, we finally arrive at
\begin{equation}\label{2.squeez}
\|\xi_w(t)\|_{E}\le e^{-\beta (t-\tau)}\|\xi_{w}(\tau)\|_{E}+Ce^{K(t-\tau)}\(\eb+\|\xi_w(\tau)\|_{E^{-\sigma}}+
\|\Pr(\eb)\bar\nu\|_{L_b^1(\tau,t;H)}\)
\end{equation}
for some positive constants $\beta$, $K$ and $C$ which are independent of $\eb$, $t$, $\tau$ and $u_1,u_2$. Since the embedding $E\subset E^{-\sigma}$ is compact, estimate \eqref{2.squeez} gives us the desired estimate \eqref{1.squeez} with $W=E^{-\sigma}$ (up to the proper scaling of $\eb$ if necessary). The entropy estimate \eqref{2.ent-1} is then follows from Theorem \ref{Th1.main-ent} and the theorem is proved.
 \end{proof}
\begin{remark} In the case when the external force $\mu$ is translation-compact in $L^1_b(\R,H)$, the projector $\Pr(\eb)$ can be replaced by $\operatorname{Id}$ and we end up with the standard formula for Kolmogorov's entropy of Vishik and Chepyzhov, \cite{CV}. Analogously, if $\mu$ is space (time) regular, we may omit the corresponding smoothing and replace $\Pr(\eb)$ by $S_{\delta(\eb)}$   ($P_{N(\eb)}$) respectively. We also note  that as it follows from the proof of Lemma \ref{Lem2.key} the concrete form of time and space smoothing operators ($S_{\delta(\eb)}$ and $P_{N(\eb)}$) is also not important and they can be replaced by more general smoothing/convolution operators, we just fix them in the form which is most convenient for us.
\end{remark}

\section{Application 2. Critical damped wave equation with periodic BC}\label{s.5}

In this section, we obtain the analogue of Theorem \ref{Th2.main-ent} for the quintic weakly damped wave equation \eqref{2.wave}. As in the previous section, we will assume that our external force $\mu$ is (weakly) uniformly non-atomic and is weakly regular  (i.e. \eqref{2.w-r} is satisfied), but
we now consider the case of periodic boundary conditions only and
the nonlinearity $f$ is assumed to satisfy
\begin{equation}\label{3.f}
1. \ f(u)=u^5+h(u),\ \ 2.\  h\in C^2(\R),\ h(0)=0,\ \ 3.\  |h''(u)|\le C(1+|u|^q),\ \ q<3
\end{equation}
in order to be consistent with \cite{SZUMN}.
\par
 The definition of the SS-solution of \eqref{2.wave} in the critical quintic case is exactly the same as for the subcritical case and the analogues of Theorem \ref{Th2.wp} and Corollary \ref{Cor2.weak} are proved in \cite{SZUMN}, so we will concentrate here on obtaining the analogues of Theorem \ref{Th2.main-ent}. Namely, we intend to prove the following result.

\begin{theorem}\label{Th3.main-ent} Let the nonlinearity $f$ and the external measure $\mu$ satisfy \eqref{3.f} and \eqref{2.w-r} respectively and let, in addition, the wave equation \eqref{2.wave} be endowed with periodic boundary conditions. Then the family of processes $\{U_\nu(t,\tau),\nu\in\Cal H(\mu)\}$ generated by equations \eqref{2.wave1} possess a strong uniform attractor $\Cal A_{un}=\Cal A^s_{un}=\Cal A_{un}^w$ in the energy phase space $E$. Moreover, its Kolmogorov's entropy possesses the following estimate:
\begin{equation}\label{3.ent-1}
\Bbb H_\eb(\Cal A_{un},E)\le D\log_2(\frac{2R_0}\eb)+\Bbb H_{\eb/K}(S_{\delta(\eb)}P_{N(\eb)}\Cal H(\mu), M_b([0,L\log_2(\frac{2R_0}\eb)], H)),
\end{equation}
where $D$, $R_0$, $K$ and $L$ are some constants and $\delta(\eb)$ and $N(\eb)$ are some functions of $\eb$ which can be explicitly found if $\mu$ is given.
\end{theorem}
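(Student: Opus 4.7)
The plan is to follow the blueprint of the proof of Theorem \ref{Th2.main-ent} and to reduce everything, via the abstract entropy Theorem \ref{Th1.main-ent} with the same projector $\Pr(\eb):=S_{\delta(\eb)}\circ P_{N(\eb)}$, to verifying the squeezing inequality \eqref{1.squeez} on the uniform absorbing ball $\Cal B$ for the family of processes generated by \eqref{2.wave1}. The well-posedness, dissipative energy-Strichartz estimate and existence of the weak uniform attractor in the critical periodic case are granted by the analogues of Theorem \ref{Th2.wp} and Corollary \ref{Cor2.weak} proved in \cite{SZUMN}, and Lemma \ref{Lem2.key} applies unchanged since it concerns only the linear wave operator. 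Thus the difference $w=u_1-u_2$ again splits as $w=\tilde w+v$ with $\|\xi_v\|_E+\|v\|_{L^4(L^{12})}\le\eb$, and $\tilde w$ solves the perturbed linear problem \eqref{2.w-dif2}.

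Steps 1 and 2 of the proof of Theorem \ref{Th2.main-ent} transfer verbatim: the bound \eqref{2.lv} on $\|l v\|_{L^1(H)}$ only uses the critical growth $|f'(u)|\le C(1+|u|^4)$ together with the Strichartz bound $u_j\in L^4_b(L^{12})$, and the negative-regularity Lipschitz estimate \eqref{2.lip-s} on $\xi_{\tilde w}$ in $E^{-\sigma}$ was designed to allow the critical exponent. The main obstacle is therefore the third step, where the subcritical Hölder interpolation \eqref{2.subcrit} collapses at $p=5$: the exponent $\tfrac{12}{p-1}$ becomes $3$ and $\|l\tilde w\|_{L^2}$ can no longer be dominated by $\|\xi_{\tilde w}\|_{E^{-\sigma}}$ times an $L^1_b$ weight.

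To overcome this, I would exploit the periodic boundary conditions via the energy-to-Strichartz estimate of \cite{SZUMN,MSSZ}. Writing $f(u_1)-f(u_2)=(u_1^5-u_2^5)+(h(u_1)-h(u_2))$ and splitting the critical piece as $u_1^5-u_2^5=\tilde w\,P(u_1,u_2)+v\,P(u_1,u_2)$ with $P(u_1,u_2)=\sum_{k=0}^{4}u_1^{k}u_2^{4-k}$, the $v$-contribution is small by Lemma \ref{Lem2.key} and the Strichartz bounds on $u_1,u_2$, while the sub-quintic remainder involving $h$ is handled exactly as in the subcritical Section \ref{s4}. For the remaining quintic piece in $\tilde w$, I would apply the linear Strichartz inequality on short intervals $[t,t+\rho]$ on which $\|u_1\|_{L^4(L^{12})}^4+\|u_2\|_{L^4(L^{12})}^4\le\kappa$ with $\kappa>0$ small enough to absorb the critical term into the left-hand side; this yields a local bound
\begin{equation*}
\|\tilde w\|_{L^4(t,t+\rho;L^{12})}\le C\|\xi_{\tilde w}(t)\|_E+C\eb+C\|\Pr(\eb)\bar\nu\|_{L^1([t,t+\rho];H)}.
\end{equation*}
Iterating over the partition of $[\tau,t]$ provided by the uniform $L^4_b(L^{12})$-control inherited from \eqref{2.en-str-dis}, one transfers this into a global Strichartz bound on $\tilde w$ with exponential-in-time constants.

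With the Strichartz control of $\tilde w$ available, the energy identity for \eqref{2.w-dif2} can be closed: the critical contribution $((u_1^5-u_2^5),\Dt\tilde w+\alpha\tilde w)$ is estimated by Hölder as $C(\|u_1\|_{L^{12}}^4+\|u_2\|_{L^{12}}^4)\|\tilde w\|_{L^{12}}\|\xi_{\tilde w}\|_E$, the $L^{12}$-norm of $\tilde w$ is interpolated between the already-controlled Strichartz norm and $E^{-\sigma}$, and the $E^{-\sigma}$-smallness from step 2 is inserted. Applying the Gronwall inequality and combining with \eqref{2.small} and \eqref{2.lv} delivers the squeezing bound
\begin{equation*}
\|\xi_w(t)\|_E\le e^{-\beta(t-\tau)}\|\xi_w(\tau)\|_E+Ce^{K(t-\tau)}\bigl(\eb+\|\xi_w(\tau)\|_{E^{-\sigma}}+\|\Pr(\eb)\bar\nu\|_{L^1_b(\tau,t;H)}\bigr),
\end{equation*}
which is precisely \eqref{1.squeez} with $W=E^{-\sigma}$. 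Since $E\subset E^{-\sigma}$ is compact, Theorem \ref{Th1.main-ent} then produces the strong uniform attractor and the entropy estimate \eqref{3.ent-1}, completing the proof. I expect the delicate point to be the uniform choice of the Strichartz partition length $\rho$ and the absorption of the critical term into the left-hand side independently of $\eb$, since this is exactly where the periodic geometry is decisive.
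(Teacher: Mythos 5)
Your reduction to the squeezing inequality \eqref{1.squeez}, the observation that Lemma \ref{Lem2.key}, estimate \eqref{2.lv} and the $E^{-\sigma}$-Lipschitz bound \eqref{2.lip-s} survive at the critical exponent, and the identification of \eqref{2.subcrit} as the step that breaks down all agree with the paper. The gap is in your replacement for Step 3. A Strichartz estimate for the difference $\tilde w$ obtained by absorbing the quintic term on short intervals and iterating can only produce a bound of the form $\|\tilde w\|_{L^4(t,t+1;L^{12})}\le Ce^{K(t-\tau)}\bigl(\|\xi_{\tilde w}(\tau)\|_E+\dots\bigr)$, i.e. Lipschitz continuity in the \emph{full} energy norm with an exponentially growing constant. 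Feeding this back into the energy identity returns a term $Ce^{K(t-\tau)}\|\xi_w(\tau)\|_E$ on the right-hand side, which is incompatible with \eqref{1.squeez}: there the coefficient of $\|\xi_1-\xi_2\|_E$ must be $\le 1/16$, and only the compact norm $\|\cdot\|_W$ and the external-force term may carry large constants. The proposed interpolation of $\|\tilde w(s)\|_{L^{12}}$ between the Strichartz norm and $E^{-\sigma}$ cannot rescue this: $L^4_tL^{12}_x$ gives no pointwise-in-time $L^{12}_x$ control, and any convex interpolation of $L^{12}_x$ with $L^{6/(1+2\sigma)}_x\supset H^{1-\sigma}$ lands strictly below $L^{12}_x$, so no positive power of the compact norm can be extracted. (There is also a bookkeeping issue: four factors of $u_i\in L^4_tL^{12}_x$ already exhaust $L^1_t$, leaving no H\"older room in time for a fifth factor $\tilde w\in L^4_tL^{12}_x$.)

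What the paper does instead --- and what your argument is missing --- is a splitting of the \emph{trajectories themselves}, not of the difference: Proposition \ref{Prop3.split} writes $u_i=u_{i,s}+u_{i,c}$ with $u_{i,s}$ of size $\alpha$ in the energy-Strichartz norm uniformly for large times and $u_{i,c}$ bounded in $E^{\sigma}\cap L^4(W^{\sigma,12})$; this is where the periodic energy-to-Strichartz estimate actually enters, through the dissipativity of the auxiliary problems \eqref{3.small}--\eqref{3.exp} and the hybrid restarting construction \eqref{hybrid}. Correspondingly $l=l_s+l_c$, where $\|l_s\tilde w\|_{L^2}\le m_s(t)\|\xi_{\tilde w}\|_E$ with $\int_t^{t+1}m_s\le C\alpha$ small enough to be absorbed into the decay rate $\beta$, while $\|l_c\tilde w\|_{L^2}\le m_c(t)\|\xi_{\tilde w}\|_{E^{-\sigma}}$ with $m_c\in L^1_b$ thanks to the extra $W^{\sigma,12}$ regularity of $u_{i,c}$. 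This is exactly the structure (small coefficient times the $E$-norm plus bounded coefficient times the compact norm) that the Gronwall argument \eqref{3.gron-m} requires and that your scheme does not deliver.
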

Note from the very beginning that the periodic boundary conditions are necessary for the energy-to-Strichartz estimate \eqref{2.en-str} only and is not used anywhere else. The strategy of the proof is the same as in the subcritical case, the only difference is that  we do  not have estimate \eqref{2.subcrit} in the critical case, so we need to replace this estimate by something more delicate. To this end, we need the following result which is the key technical tool for our proof.

\begin{proposition}\label{Prop3.split} Let the assumptions of Theorem \ref{Th3.main-ent} hold. Then, for every $\eb>0$ there exists $T=T(\eb)$ such that every solution  $u(t)$ of \eqref{2.wave1} starting from the unform absorbing set $\Cal B$ (i.e. $\xi_u\big|_{t=\tau}=\xi_\tau\in \Cal B$) can be split in two parts $u(t)=u_s(t)+u_c(t)$ such that, for every $t\ge\tau+T(\eb)$,
\begin{equation}\label{3.ssplit}
\|\xi_{u_s}(t)\|_{ E}+\|u_s\|_{L^4(t,t+1;L^{12})}\le\eb,\ \ \|\xi_{u_c}(t)\|_{E^\sigma}+\|u_c\|_{L^4(t,t+1;W^{\sigma,12})}\le C_\eb,
\end{equation}
where the constant $C_\eb$ depends on $\eb$, but is independent of $t$, $\tau$, $\nu\in\Cal H(\mu)$ and $\xi_\tau\in\Cal B$ and $\sigma>0$ is small enough.
\end{proposition}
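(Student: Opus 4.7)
The plan is to construct the splitting $u=u_s+u_c$ in two successive linear stages, isolating first the non-translation-compact part of $\nu$ and then the decaying initial data, after which $u_c$ solves a nonlinear wave equation with smooth forcing and zero initial data; to the latter equation I will apply the autonomous critical-case asymptotic smoothing scheme of \cite{SZUMN}.

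First, I would fix the parameters $\delta(\eb)$ and $N(\eb)$ entering $\Pr(\eb)=S_{\delta(\eb)}\circ P_{N(\eb)}$ according to Lemma \ref{Lem2.key}, and let $v$ be the solution of the linear damped wave equation with forcing $(1-\Pr(\eb))\nu$ and zero initial data at $t=\tau$; the lemma then yields $\|\xi_v(t)\|_E+\|v\|_{L^4(t,t+1;L^{12})}\le\eb$ uniformly in $t\ge\tau$ and $\nu\in\Cal H(\mu)$. Next, let $w$ solve the linear homogeneous damped wave equation with initial data $\xi_u(\tau)\in\Cal B$; the exponential decay of the linear semigroup on $E$ combined with the standard linear Strichartz estimate forces $\|\xi_w(t)\|_E+\|w\|_{L^4(t,t+1;L^{12})}\le\eb$ for $t\ge\tau+T(\eb)$, with $T(\eb):=\alpha^{-1}\log(CR_0/\eb)$. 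Setting $u_s:=v+w$ and $u_c:=u-u_s$ then gives the first half of \eqref{3.ssplit} after a harmless rescaling of $\eb$.

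By construction $\xi_{u_c}(\tau)=0$ and $u_c$ satisfies
\begin{equation*}
\Dt^2 u_c+\gamma\Dt u_c-\Dx u_c+u_c+f(u_s+u_c)=\Pr(\eb)\nu,
\end{equation*}
whose right-hand side is finite-dimensional in $x$ and smooth in $t$, while $u=u_s+u_c$ obeys the a priori dissipative energy--Strichartz bound \eqref{2.en-str-dis}. I would now import the autonomous critical-case smoothing scheme of \cite{SZUMN}: differentiate the $u_c$-equation in $t$ to obtain a linear problem for $\Dt u_c$ with critical potential $f'(u_s+u_c)$ and smooth right-hand side $\Dt\Pr(\eb)\nu$; absorb the potential through the energy-to-Strichartz estimate \eqref{2.en-str} applied to $u$ (this is where the periodic boundary conditions enter essentially); and bootstrap via fractional Strichartz estimates to obtain $\Dt u_c\in L^\infty_t L^2_x\cap L^4_{loc}(L^{12})$. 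Reading $\Dx u_c$ off the equation and iterating the fractional Strichartz argument then delivers $\|\xi_{u_c}(t)\|_{E^\sigma}+\|u_c\|_{L^4(t,t+1;W^{\sigma,12})}\le C_\eb$, with the uniformity in $\nu,\tau,\xi_\tau$ inherited from that of Lemma \ref{Lem2.key} and \eqref{2.en-str-dis}.

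The hard part is precisely this last fractional gain: in the critical regime the sub-critical estimate \eqref{2.subcrit} is unavailable, and one must control the commutator of $f'$ with a fractional derivative using the decomposition $f(u)=u^5+h(u)$ with sub-cubic $h''$ from \eqref{3.f}, together with fractional Leibniz / Moser-type inequalities; the rest is a routine non-autonomous adaptation of \cite{SZUMN}, with the weakly regular measure entirely confined to Lemma \ref{Lem2.key}.
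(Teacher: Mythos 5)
Your construction of $u_s$ is fine as far as its smallness in $E$ and in the Strichartz norm goes, but the splitting you chose makes the regularity claim for $u_c$ unprovable, and you also omit a step that the $t$-uniformity in \eqref{3.ssplit} forces on any proof.

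First gap: you let $w$ solve the \emph{linear homogeneous} equation, so the entire quintic nonlinearity lands in the $u_c$-equation as $f(u_s+u_c)$. Writing $f(u_s+u_c)=\bigl[f(u_s+u_c)-f(u_s)\bigr]+f(u_s)$, the bracket is a difference of the type treated in \cite{SZUMN} and is manageable, but the residual forcing $f(u_s)$ is only in $L^1_{loc}(L^2)$ (via $u_s\in L^\infty(H^1)\cap L^4_{loc}(L^{12})$) and carries no positive smoothness, since $u_s$ itself has none. No time-differentiation or fractional Leibniz manipulation will place $f(u_s)$ in $L^1_{loc}(H^\sigma)$ for $\sigma>0$, so the gain $\xi_{u_c}(t)\in E^\sigma$ cannot be extracted from your equation; your time-differentiated equation also contains the uncontrollable source $f'(u)\Dt u_s$. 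The paper avoids this by letting the \emph{small} component solve the nonlinear equation $\Dt^2u_1+\gamma\Dt u_1-\Dx u_1+u_1+f(u_1)+Lu_1=(1-\Pr(\eb))\nu$ with the full initial data $\xi_u|_{t=\tau}$, where the large constant $L$ compensates the instability of $f$ and forces $u_1$ to decay exponentially to an $O(\eb)$ state (this is Lemma 5.3, which itself uses your linear solution $v$ as an intermediate object). The remainder $u_2$ then has zero data and sees only the difference $f(u_1+u_2)-f(u_1)$ together with the genuinely smooth forcings $Lu_1+\Pr(\eb)\nu$ (note $u_1\in L^\infty(H^1)\subset L^1_{loc}(H^\sigma)$), and the smoothing argument of \cite[Lemma 8.5]{SZUMN} applies.

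Second gap: even with the correct splitting, the $E^\sigma$-bound on the smooth component produced by the Gronwall/Strichartz bootstrap grows like $Ce^{Kt}$ (this is \eqref{3.exp1}); it is not uniform in $t$ because the Gronwall factor accumulates the Strichartz norm of $u$ over $[\tau,t]$. Your $u_c$ is defined once and for all from $t=\tau$, so even granting the regularity gain, your constant $C_\eb$ would blow up as $t\to\infty$. The paper restores uniformity by restarting the decomposition on each time window: it builds $u_1^n,u_2^n$ starting from $T_n=T(n-1)$ and sets $u_s:=u_1^n$, $u_c:=u_2^n$ on $[nT,(n+1)T)$, so that the exponential decay of $u_1^n$ has had time $T$ to act while $u_2^n$ has grown only for time $2T$, yielding the uniform bounds $2C\eb$ and $M_\eb$. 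Without this piecewise hybrid construction (or an equivalent device) the proposition as stated does not follow.
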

\begin{proof} Following \cite{SZUMN}, we first construct an auxiliary split of $u(t)=u_1(t)+u_2(t)$ on a small and smooth, but exponentially growing components. Namely, we set $\tau=0$ for simplicity and  take $u_1(t)$ as a solution of
\begin{equation}\label{3.small}
\Dt^2u_1+\gamma \Dt u_1-\Dx u_1+u_1+f(u_1)+L u_1=(1-\Pr(\eb))\nu,\ \ \xi_{u_1}\big|_{t=0}=\xi_u\big|_{t=0},
\end{equation}
where $L=L_f$ is a sufficiently large constant which is introduced to compensate the instability of $f$ and $\eb$ is a sufficiently small positive number. Then the reminder obviously solves
\begin{equation}\label{3.exp}
\Dt^2 u_2+\gamma\Dt u_2-\Dx u_2+u_2+[f(u_1+u_2)-f(u_1)]=Lu_1+\Pr(\eb)\nu,\  \ \xi_{u_2}\big|_{t=0}=0.
\end{equation}
We start with the $u_1$-component and prove the following result.
\begin{lemma}\label{Lem3.small} Let the above assumptions hold. Then the function $u_1$ satisfies the following estimate:
\begin{equation}\label{3.u1sm}
\|\xi_{u_1}(t)\|_{E}+\|u_1\|_{L^4(t,t+1;L^{12})}\le C(\eb+Ke^{-\alpha t}),
\end{equation}
where the positive constants $C$, $K$ and $\alpha$ are independent of $\eb$, $\nu$ and $u$.
\end{lemma}
\begin{proof}[Proof of the Lemma]  We split the proof on several steps.
\par
{\it Step 1.} The function $u_1$ is uniformly bounded in time.  Indeed, this is an immediate corollary of the dissipative estimate \eqref{2.en-str-dis} applied to equation \eqref{3.small} (the presence of the extra term $Lu_1$ is not essential and can be treated as the part of the non-linearity $f$). Thus, we have the estimate
 \begin{equation}\label{3.bound}
 \|\xi_{u_1}(t)\|_E+\|u_1\|_{L^4(t,t+1;L^{12})}\le C,
 \end{equation}
 where $C$ is independent of $t$, $\nu$, and $u$.
 \par
 {\it Step 2.} The function $u_1$ is asymptotically small in the energy norm. Let $\bar u_1(t)$ be a solution of the linear problem
 $$
 \Dt\bar u_1+\gamma\Dt \bar u_1-\Dx\bar u_1+\bar u_1+L\bar u_1=(1-\Pr(\eb))\nu,\ \ \xi_{\bar u_1}\big|_{t=0}=0.
 $$
 Then, according to Lemma \ref{Lem2.key}, we have the estimate
 \begin{equation}\label{3.lin-small}
 \|\xi_{\bar u_1}\|_E+\|\bar u_1\|_{L^4(t,t+1;L^{12})}\le \eb
 \end{equation}
(the presence of an extra term $L\bar u_1$ does not change anything in the proof). Let $\tilde u_1:=u_1-\bar u_1$. Then, this function solves
\begin{equation}\label{3.tilde1}
\Dt^2\tilde u_1+\gamma\Dt\tilde u_1-\Dx\tilde u_1+\tilde u_1+f(\tilde u_1+\bar u_1)+L\tilde u_1=0,\ \xi_{\tilde u_1}\big|_{t=0}=\xi_0.
\end{equation}
 We multiply this equation by $\Dt\tilde u_1+\alpha\tilde u_1$. Then, after the standard transformations, we arrive at
 \begin{multline}
 \frac12\frac d{dt}\(\|\Dt \tilde u_1\|^2_{L^{2}}+\|\Nx \tilde u_1\|_{L^2}^2+(L+1)\|\tilde u_1\|_{L^2}^2
 +2(F(\tilde u_1),1)+\alpha\gamma\|\tilde u_1\|_{L^2}^{2}+2\alpha(\Dt\tilde u_1,\tilde u_1)\)+\\+
 (\gamma-\alpha)\|\Dt\tilde u_1\|^2+\alpha\|\Nx\tilde u_1\|^2_{L^2}+\alpha (L+1)\|\tilde u_1\|^2_{L^2}+\alpha(f(\tilde u_1),\tilde u_1)=\\=(f(\tilde u_1)-f(\bar u_1+\tilde u_1),\Dt\tilde u_1+\alpha\tilde u_1).
\end{multline}
  Due to our conditions on the nonlinearity $f$, we may fix the constant $L$ in such a way that the function $f_L(u):=f(u)+Lu$ satisfies
$$
1. \ F_L(u)\ge|u|^2,\ \ 2.\  f_L(u)u\ge|u|^2,\ \ 3.\  F_L(u)\le\kappa f_L(u)u+C|u|^2
$$
for some positive $\kappa$ and $C$. Then, performing the standard estimates and fixing $\alpha$ to be small enough,  we end up with the following differential inequality:
\begin{equation}\label{3.gron}
\frac12\frac d{dt}\Cal E(\tilde u_1)+\beta\Cal E(\tilde u_1)\le \|f(\tilde u_1(t)+\bar u_1(t))-f(\tilde u_1(t))\|_{L^2}[\Cal E(\tilde u_1)]^{1/2},
\end{equation}
where
$$
\Cal E(u)=\|\Dt u\|^2_{L^2}+\|\Nx u\|^2_{L^2}+\|u\|^2_{L^2}+2(F_L(u),1)+\gamma\alpha\|u\|^2_{L^2}+2\alpha(\Dt u,u),
$$
see \cite{SZUMN} for more details. In order to estimate the right-hand side of \eqref{3.gron}, we use estimates \eqref{3.bound} and \eqref{3.lin-small} together with the fact that $f'(u)$ has the quartic growth rate, namely,
\begin{multline}
\|f(\tilde u_1(t)+\bar u_1(t))-f(\tilde u_1(t))\|_{L^1(t,t+1,L^2)}\le\\\le
C\(1+\|\tilde u_1\|_{L^4(L^{12})}^4+\|u_1\|_{L^4(L^{12})}^4\)\|\bar u_1\|_{L^\infty(L^{6})}\le C\|\xi_{\bar u_1}\|_{L^\infty(t,t+1;E)}\le C\eb.
\end{multline}
It remains to note that, for sufficiently small $\alpha>0$, we have
$$
\frac12 \|\xi_u\|_E^{2}\le \Cal E(u)
$$
and, therefore, the Gronwall inequality applied to \eqref{3.gron} gives us the desired asymptotic smallness in the energy norm:
\begin{equation}\label{3.sm-en}
\|\xi_{\tilde u_1}(t)\|_{E}\le C(\eb+K e^{-\beta t}),
\end{equation}
where $C$, $K$ and $\alpha$ are independent of $\eb$, $\nu$, $t$ and $u$.
\par
{\it Step 3.} The function $ u_1$ is asymptotically small in the Strichartz norm. We return to equation \eqref{3.tilde1} and apply the energy-to-Strichartz estimate treating the term $f(\tilde u_1+\bar u_1)$ as a perturbation. This gives
$$
\|\tilde u_1\|_{L^4(t,t+1;L^{12})}\le C\(\|\xi_{\tilde u_1}(t)\|_{E}+\|f(\tilde u_1+\bar u_1)\|_{L^1(t,t+1;L^{2})}\).
$$
Using the fact that $f'$ has the quartic growth rate and that $f(0)=0$ together with estimates \eqref{3.lin-small} and \eqref{3.bound}, we get
\begin{multline}
\|f(\tilde u_1+\bar u_1)\|_{L^1(t,t+1;L^{2})}\le C(1+\|\tilde u_1\|^4_{L^4(L^{12})}+\|\bar u_1\|^4_{L^4(L^{12})})(\|\xi_{\tilde u_1}\|_{L^\infty(E)}+\|\xi_{\bar u_1}\|_{L^\infty(E)})\le\\\le C(\eb+\|\xi_{\tilde u_1}\|_{L^{\infty}(t,t+1;E)}).
\end{multline}
Thus,
$$
\|\tilde u_1\|_{L^4(t,t+1;L^{12})}\le C(\eb+\|\xi_{\tilde u_1}\|_{L^\infty(t,t+1;E)})
$$
and this estimate together with \eqref{3.sm-en} and \eqref{3.lin-small} give the desired asymptotic
smallness of the Strichartz norm of $u_1$ which in turn gives the desired estimate \eqref{3.u1sm} and finishes the proof of the lemma.
\end{proof}
We now turn to the $u_2$-component of the solution $u$ which is estimated in the next lemma.
\begin{lemma} Let the above assumptions hold and let $\sigma\in(0,\frac25)$. Then, the following estimate holds:
\begin{equation}\label{3.exp1}
\|\xi_{u_2}(t)\|_{E^{\sigma}}+\|u_2\|_{L^4(t,t+1;W^{\sigma,12})}\le C e^{Kt},
\end{equation}
where the constants $C$ and $K$ may depend on $\eb$, but are independent of $u$, $\nu\in\Cal H(\mu)$ and $t$.
\end{lemma}
The proof of this lemma is identical to \cite[Lemma 8.5]{SZUMN} and so is omitted. We only note here that the term $\Pr(\eb)\nu$ is smooth in space and time, so it cannot produce any difficulties except of making the constants $C$ and $K$ depending on $\eb$.
\par
We are now ready to finish the proof of the proposition.
Let us now fix a big number $T=T(\eb)$ and
  consider decompositions $u(t)=u_1^n(t)+u_2^n(t)$, $t\ge nT$ which are defined by equations \eqref{3.small}
   and \eqref{3.exp}, but starting with the time moment $T_n=T(n-1)$ with the initial data
   $$
    \xi_{u_1^n}\big|_{t=T_n}=\xi_u\big|_{t=T_n},\ \ \xi_{u_2^n}\big|_{t=T_n}=0.
   $$
   Then, due to estimates \eqref{3.u1sm} and \eqref{3.exp1} we get
   \begin{equation}\label{3.1n}
   \|\xi_{u^n_1}(t)\|_{\E}+\|u_1^n\|_{L^4(t,t+1;L^{12})}\le C(\eb+Ke^{-\beta t})\le2C\eb
   \end{equation}
   if $t\ge T_n+T=Tn$ and $T$ is chosen to satisfy
    $$
    CKe^{-\beta T}=C\eb
    $$
    and
   \begin{equation}\label{3.2n}
\|\xi_{u_2^n}(t)\|_{\E}+\|u_2^n\|_{L^4(t,t+1;L^{12})}\le Ce^{Kt}\le M_\eb
   \end{equation}
   if $t\le T_n+2T=T(n+1)$.
\par
Finally, we define the desired functions $u_s(t)$ and $u_c(t)$ for $t\ge T_\eb:=T$ as piece-wise
continuous hybrid functions:
\begin{equation}\label{hybrid}
u_s(t):=u^n_1(t),\ \ u_c(t):=u^n_2(t),\ \ t\in [nT,(n+1)T).
\end{equation}
The desired properties of $u_s$ and $u_c$ are now guaranteed by estimates \eqref{3.1n} and \eqref{3.2n}. Crucial for the above construction is that we define the functions $u_1^n(t)$ and $u_2^n(t)$
 starting from the initial time $T_n=T(n-1)$, but are in \eqref{hybrid} on the time
  interval $t\in[Tn,T(n+1)]$ only and this time shift of length $T$ guarantees that $u_1^n(t)$ is already
  small for	$t\in[Tn,T(n+1)]$ due to exponential decay of the $u_1^n$-component. Thus, the proposition is proved. 	
\end{proof}
We are now ready to prove the main result of this section.
\begin{proof}[Proof of Theorem \ref{Th3.main-ent}] As we have already mentioned, the proof of this theorem follows the scheme presented in the proof of Theorem \ref{Th2.main-ent} for the subcritical case. The only estimate which fails in the critical case is \eqref{2.subcrit} which now holds for $\sigma=0$ only, so we only need to modify it properly using Proposition \ref{Prop3.split}. Indeed, let
as in the proof of Theorem \ref{Th2.main-ent}, $u_1(t)$ and $u_2(t)$ be two solutions of \eqref{2.wave1} with external forces $\nu_1$, $\nu_2\in\Cal H(\mu)$ and different initial data starting from the absorbing ball $\Cal B$ at
$t=\tau$. Then, we take a sufficiently small $\alpha>0$ which will be specified below and which is {\it independent} of $\eb$ and split every of two solutions $u_1$ and $u_2$ on small and smooth parts:
$$
u_1(t)=u_{1,s}(t)+u_{1,c}(t),\ \ u_2(t)=u_{2,s}(t)+u_{2,c}(t)
$$
such that  inequalities \eqref{3.ssplit} hold with $\alpha$ instead of $\eb$ for all $t\ge \tau+T(\alpha)$. Important that $T(\alpha)$ and $C_\alpha$ are independent of $u_1,u_2$, $\nu_1$, $\nu_2$, $\eb$ and $t$. Moreover, replacing the absorbing set $\Cal B$ by the new absorbing set
$$
\Cal B_\alpha:=\cup_{\nu\in\Cal H(\mu)}U_\nu(T(\alpha),0)\Cal B,
$$
if necessary, we may assume without loss of generality that \eqref{3.ssplit} hold for all $t\ge\tau$. Using this splitting, we may write
\begin{multline}
l(t)=\int_0^1f'(s u_1(t)+(1-s)u_2(t))\,ds=\\=
\int_0^1[f'(su_1(t)+(1-s)u_2(t))-f'(su_{1,c}(t)+(1-s)u_{2,c}(t))]\,ds+\\+
\int_0^1f'(su_{1,c}(t)+(1-s)u_{2,c}(t))\,ds:=l_s(t)+l_c(t).
\end{multline}
 Since $u_{i,c}(t)$ are more regular,  arguing as in the proof of \eqref{2.subcrit}, we get
\begin{equation*}
\begin{split}
&\|l_c(t)\tilde w(t)\|_{L^2}\le C\|l_c(t)\|_{L^{\frac{3}{1-4\sigma}}}\|\tilde w(t)\|_{L^{\frac{6}{1+8\sigma}}}\\
&\ \ \ \ \ \ \ \ \ \ \ \ \ \ \ \ \ \le C(\|u_{1,c}\|_{L^{\frac{12}{1-4\sigma}}}^{4}+\|u_{2,c}\|_{L^{\frac{12}{1-4\sigma}}}^{4})\|\tilde w(t)\|_{L^{\frac{6}{1+2\sigma}}}\\
&\ \ \ \ \ \ \ \ \ \ \ \ \ \ \ \ \ \le C(\|u_{1,c}\|_{W^{\sigma,12}}^{4}+\|u_{2,c}\|_{W^{\sigma,12}}^{4})\|\tilde w(t)\|_{H^{1-\sigma}}:=m_c(t)\|\xi_{\tilde w}\|_{E^{-\sigma}},
\end{split}
\end{equation*}
where the  scalar function $m_c(t)\ge0$  satisfies
\begin{equation}\label{3.m-bound}
\int_t^{t+1}m_c(s)\,ds\le C_\alpha,
\end{equation}
where $C_\alpha$ is independent of $u_i$, $\nu_i$, $\eb$ and $t$. On the other hand,
\begin{multline}
l_s(t)=\int_0^1\int_0^1f''(s_1(su_1(t)+(1-s)u_2(t))+\\+(1-s_1)(su_{1,c}(t)+(1-s)u_{2,c}(t)))\,ds_1
(su_{1,s}(t)+(1-s)u_{2,s}(t))\,ds
\end{multline}
and therefore, due to our assumptions on the nonlinearity $f$,
\begin{equation}
\|l_s(t)\|_{L^3}\le C\(1+\sum_{i=1}^2(\|u_{i,c}(t)\|^3_{L^{12}}+\|u_{i,s}(t)\|^3_{L^{12}})\)
\(\sum_{i=1}^2\|u_{i,s}(t)\|_{L^{12}}\).
\end{equation}
Thus, due to the H\"older inequality, we have
\begin{equation}
\|l_s(t)\tilde w\|_{L^2}\le m_s(t)\|\xi_{\tilde w}(t)\|_E,
\end{equation}
where the scalar function $m_s(t)\ge0$ satisfies
\begin{equation}\label{3.m-small}
\int_t^{t+1}m_s(\kappa)\,d\kappa\le C\alpha,
\end{equation}
for some $C$ which is independent of $\alpha$, $\eb$, $u_i$, $t$ and $\nu_i$. Thus, \eqref{2.subcrit} is finally replaced by
\begin{equation}
\|l(t)\tilde w(t)\|_{L^2}\le m_s(t)\|\xi_{\tilde w}(t)\|_E+m_c(t)\|\xi_{\tilde w}(t)\|_{E^{-\sigma}}
\end{equation}
and the analogue of \eqref{2.gron-m} will now read
\begin{equation}\label{3.gron-m}
\frac d{dt}\tilde{\Cal E}^{\frac{1}{2}}(\tilde w)+(\beta-m_s(t))\tilde{\Cal E}^{\frac{1}{2}}(\tilde w)\le m_c(t)\|\xi_{\tilde w}\|_{E^{-\sigma}}+\|\tilde\nu(t)\|_{L^2}.
\end{equation}
Fixing now $\alpha>0$ in such a way that the constant $C$ in \eqref{3.m-small} satisfies $C\alpha\le\frac\beta2$ and applying the Gronwall inequality, we end up with the desired estimate \eqref{2.squeez} and finish the proof of the theorem.

\end{proof}

\section{Application 3. 2D Navier-Stokes system}\label{s6}

In this section, we consider the following  Navier-Stokes system:
\begin{equation}\label{4.NS}
\Dt u+(u,\Nx)u+\Nx p=\Dx u+g(t),\ \ \divv u=0,\ \ u\big|_{\partial\Omega}=0, \ u\big|_{t=\tau}=u_\tau
\end{equation}
in a bounded smooth domain $\Omega$ of $\R^2$. Here $u=(u_1,u_2)$ and $p$ are unknown velocity field and pressure respectively and $g$ is a given external force. We introduce as usual the space $\Cal D_\sigma(\Omega)$ as the subspace of $C_0^\infty(\Omega)$ which consists of solenoidal vector fields as well as the corresponding divergence free distributions $\Cal D'_\sigma(\Omega)$. The energy phase space for problem \eqref{4.NS} is defined as follows:
$$
H:=[\Cal D_\sigma(\Omega)]_{[L^2(\Omega)]^2}.
$$
Let $\Pi:[L^2(\Omega)]^2\to H$ be the Leray orthoprojector and $A:=-\Pi\Dx$ be the Stokes operator. We also introduce the fractional powers $A^s$, $s\in\R$ of the Stokes operator and the corresponding spaces $H^s=H^s_A:=D(A^{s/2})$. Then, as known
$$
H^s_A=D((-\Dx)^{s/2})\cap\{u.n\big|_{\partial\Omega}=0\},\ \ \frac12\le s\le2,
$$
where $(-\Dx)$ is the Laplacian endowed with Dirichlet boundary conditions. In particular,
$$
H=\{u\in [L^2(\Omega)]^2,\ \ \divv u=0,\ \ u.n\big|_{\partial\Omega}=0\},\  V:=H^{1}=\{u\in[H^1_0(\Omega)]^2,\ \divv u=0\},
$$
see \cite{Temam} and references therein for more details.
\par
Throughout  this section, we assume that
\begin{equation}\label{4.g}
g\in L^2_b(\R,V')
\end{equation}
and for strong attractors and entropy estimates we will assume also that $g$ is weakly normal in this space.
\par
We recall that the function $u(t)$ is a weak-solution of problem \eqref{4.NS} if, for every $T>\tau$,
\begin{equation}\label{4.def-sol}
u\in C([\tau,T],H)\cap L^2(\tau,T;V)
\end{equation}
and equation \eqref{4.NS} is satisfied in the sense of divergence-free distributions.
\par
The following result is classical and can be found e.g. in \cite{BV,CV,Temam}.

\begin{theorem}\label{Th4.dis} Let the external force $g$ satisfy \eqref{4.g} and $u_\tau\in H$. Then problem \eqref{4.NS} possesses a unique (weak) solution $u(t)$ and this solution satisfies the energy identity
$$
\frac12\frac d{dt}\|u(t)\|^2_H+\|\Nx u(t)\|^2_{L^2}=(g(t),u(t)), \text{ for almost all $t\ge\tau$ }
$$
as well as the following dissipative estimate:
\begin{equation}\label{4.dis}
\|u(t)\|_H+\|u\|_{L^2(t,t+1;V)}\le Ce^{-\beta (t-\tau)}\|u_\tau\|_H+C\|g\|_{L^2_b(\R,V')},
\end{equation}
where the positive constants $C$ and $\beta$ are independent of $u$, $g$, $t$ and $\tau$.
\end{theorem}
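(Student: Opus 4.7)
The plan is to follow the classical three-step scheme for 2D Navier-Stokes: existence via Galerkin approximations, uniqueness via the Ladyzhenskaya inequality, and the dissipative estimate via a Gronwall argument against the Poincaré inequality. The energy identity then drops out from the regularity of 2D weak solutions.

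First, for existence, I would use a Galerkin scheme based on the eigenbasis $\{e_k\}$ of the Stokes operator $A$. Setting $u_n(t) = \sum_{k=1}^n c_k^n(t) e_k$ and projecting \eqref{4.NS} onto $H_n := \operatorname{span}\{e_1,\dots,e_n\}$ gives an ODE system with locally Lipschitz right-hand side and global-in-time a priori bounds coming from the energy estimate: testing with $u_n$ and using Young's inequality on $(g,u_n)$,
\begin{equation*}
\tfrac12\tfrac{d}{dt}\|u_n\|_H^2 + \tfrac12\|\nabla u_n\|_{L^2}^2 \le \tfrac12\|g(t)\|_{V'}^2,
\end{equation*}
(the trilinear term vanishes by divergence freeness). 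Together with the Poincaré inequality $\|u_n\|_H \le C\|u_n\|_V$, this yields uniform bounds in $L^\infty_{loc}(H) \cap L^2_{loc}(V)$. In 2D one then checks that $(u_n\cdot\nabla)u_n$ is bounded in $L^2_{loc}(V')$ via Ladyzhenskaya $\|v\|_{L^4}\le C\|v\|_H^{1/2}\|v\|_V^{1/2}$, hence $\partial_t u_n$ is bounded in $L^2_{loc}(V')$. Aubin--Lions compactness then gives strong convergence in $L^2_{loc}(H)$, which is enough to pass to the limit in the nonlinearity.

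Second, for uniqueness and continuous dependence (which yields the energy identity automatically in 2D), consider two solutions $u^1, u^2$ and let $w = u^1 - u^2$. Subtracting, testing with $w$, and using the standard identity $((w\cdot\nabla)u^2,w) = -((w\cdot\nabla)w,u^2)$ (which vanishes only partially) combined with Ladyzhenskaya gives
\begin{equation*}
\tfrac12\tfrac{d}{dt}\|w\|_H^2 + \|\nabla w\|_{L^2}^2 \le C\|u^2\|_H\|u^2\|_V\|w\|_H\|w\|_V \le \tfrac12\|\nabla w\|_{L^2}^2 + C\|u^2\|_H^2\|u^2\|_V^2\|w\|_H^2.
\end{equation*}
Since $\|u^2\|_H^2\|u^2\|_V^2 \in L^1_{loc}$ by the energy bound, Gronwall gives $w\equiv 0$ and hence uniqueness. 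The same 2D regularity that drives this estimate also shows $u \in C([\tau,T];H)$ and makes the formal energy identity (obtained by testing \eqref{4.NS} with $u$) rigorous in $\mathcal D'$, and then pointwise a.e.

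Third, for the dissipative estimate, start from the energy identity, apply Young to the right-hand side as $(g,u) \le \tfrac12\|g\|_{V'}^2 + \tfrac12\|u\|_V^2$, and use Poincaré in the form $\|u\|_H^2 \le \lambda_1^{-1}\|\nabla u\|_{L^2}^2$ to absorb some dissipation into an $H$-term:
\begin{equation*}
\tfrac{d}{dt}\|u\|_H^2 + \tfrac{\lambda_1}{2}\|u\|_H^2 + \tfrac12\|\nabla u\|_{L^2}^2 \le \|g(t)\|_{V'}^2.
\end{equation*}
A standard Gronwall--type lemma applied to the translation-bounded forcing gives the exponential decay
\begin{equation*}
\|u(t)\|_H^2 \le e^{-\beta(t-\tau)}\|u_\tau\|_H^2 + C\|g\|_{L^2_b(\R,V')}^2,
\end{equation*}
with $\beta = \lambda_1/2$, and integrating the full differential inequality over $[t,t+1]$ picks up the $L^2(V)$ part. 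The only mildly delicate point in the whole argument is the 2D nonlinear estimate (Ladyzhenskaya) used both for uniqueness and for justifying the energy identity; everything else is purely routine.
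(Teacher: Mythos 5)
The paper gives no proof of this theorem at all---it is stated as classical with a pointer to \cite{BV,CV,Temam}---and your Galerkin--Ladyzhenskaya--Gronwall argument is precisely the standard proof found in those references; it is correct in all three steps (existence, uniqueness/energy identity, dissipative estimate with $\beta=\lambda_1/2$ and the translation-bounded forcing handled by summing $e^{-\beta k}$ over unit intervals). The only cosmetic point is the intermediate trilinear bound in the uniqueness step: the natural estimate is $|b(w,u^2,w)|\le \|w\|_{L^4}^2\|\nabla u^2\|_{L^2}\le C\|w\|_H\|w\|_V\|u^2\|_V$ (without the extra factor $\|u^2\|_H$ you wrote), but either version yields a Gronwall factor in $L^1_{loc}$, so the conclusion is unaffected.
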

We now turn to attractors. Following the general scheme, we need to consider the family of Navier-Stokes equations
\begin{equation}\label{4.NS1}
\Dt u+(u,\Nx)u+\Nx p=\Dx u+\nu(t),\ \ \divv u=0,\ \ u\big|_{\partial\Omega}=0, \ u\big|_{t=\tau}=u_\tau,
\end{equation}
where the external force $\nu\in \Cal H(g)$ and define a family of dynamical processes $\{U_\nu(t,\tau), \nu\in \Cal H(g)\}$ generated by the solution operators of these problems. The well-posedness and uniform dissipativity of this family is guaranteed by Theorem \ref{Th4.dis}. Note that, in contrast to the previous sections the weak hull $\Cal H(g)$ does not contain singular measures, but consists of functions $\nu\in L^2_b(\R,V')$. Moreover, the weak continuity of the maps $(u_\tau,\nu)\to U_\nu(t,\tau)u_\tau$ for every fixed $t\ge\tau$ is also straightforward, so due to Theorem \ref{Th1.exist} and Corollary \ref{Cor1.rep}, we have the following result, see \cite{CV} for more details.
\begin{corollary} Let the assumptions of Theorem \ref{Th4.dis} hold. Then the family of dynamical processes $\{U_\nu(t,\tau), \nu\in\Cal H(g)\}$ possesses a weak uniform attractor $\Cal A^w_{un}$ in the phase space $H$ and the following representation formula holds:
$$
\Cal A_{un}^w=\cup_{\nu\in\Cal H(g)}\Cal K_{\nu}\big|_{t=0},
$$
where $\Cal K_\nu\in C_b(\R,H)$ is a set of all bounded solutions of \eqref{4.NS1} defined for all $t\in\R$.
\end{corollary}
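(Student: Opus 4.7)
The plan is to reduce the statement to the abstract machinery developed in Section~\ref{s3.m}, namely Theorem~\ref{Th1.exist} (existence of weak uniform attractor) together with Corollary~\ref{Cor1.rep} (representation via kernel sections). Three ingredients must be assembled: (i) a uniform dissipative estimate for the family $\{U_\nu(t,\tau),\ \nu\in\Cal H(g)\}$; (ii) the translation identity \eqref{1.trans}; (iii) weak joint continuity of the map $(u_\tau,\nu)\mapsto U_\nu(t,\tau)u_\tau$ from $H^w\times\Cal H(g)$ to $H^w$ for each fixed $t\ge\tau$. Compactness of the hull $\Cal H(g)$ in the weak local topology of $L^2_{loc}(\R,V')$ is free from the Banach--Alaoglu theorem since $L^2(a,b;V')$ is reflexive and $\|\nu\|_{L^2_b(\R,V')}\le\|g\|_{L^2_b(\R,V')}$ for all $\nu\in\Cal H(g)$.

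For (i), I would note that Theorem~\ref{Th4.dis} applies verbatim with $g$ replaced by any $\nu\in\Cal H(g)$, because the dissipative estimate \eqref{4.dis} depends on $g$ only through $\|g\|_{L^2_b(\R,V')}$, which is preserved under weak hull closure. This yields a common uniformly absorbing ball $\Cal B\subset H$ for the whole family. For (ii), the translation identity is a direct consequence of uniqueness of weak solutions proved in Theorem~\ref{Th4.dis}: the functions $t\mapsto U_\nu(t+h,\tau+h)u_\tau$ and $t\mapsto U_{T(h)\nu}(t,\tau)u_\tau$ solve the same initial value problem with forcing $T(h)\nu$ on $[\tau,\infty)$, so they coincide.

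The main technical step, and therefore the expected main obstacle, is (iii): the weak joint continuity. The strategy is the classical Aubin--Lions argument. Take sequences $u_\tau^n\rightharpoonup u_\tau$ weakly in $H$ and $\nu_n\rightharpoonup\nu$ weakly in $L^2_{loc}(\R,V')$, and set $u_n(t):=U_{\nu_n}(t,\tau)u_\tau^n$. The uniform dissipative estimate gives boundedness of $\{u_n\}$ in $L^\infty(\tau,T;H)\cap L^2(\tau,T;V)$; extracting an energy estimate for $\Dt u_n$ in $L^2(\tau,T;V')+L^1(\tau,T;V')$ (using that the nonlinearity $(u_n,\Nx)u_n$ is bounded in $L^2(\tau,T;V')$ in 2D by interpolation), one applies the Aubin--Lions lemma to obtain strong convergence of a subsequence in $L^2(\tau,T;H)$. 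This strong convergence is precisely what is needed to pass to the limit in the quadratic term $(u_n,\Nx)u_n\rightharpoonup (u,\Nx)u$ in the sense of distributions, while the linear terms and the forcing $\nu_n$ pass to the limit by weak convergence. The limit $u$ is then a weak solution with forcing $\nu$ and initial datum $u_\tau$; uniqueness forces $u=U_\nu(\cdot,\tau)u_\tau$, and since every subsequence has a further subsequence converging to the same limit, the whole sequence $u_n(t)$ converges weakly in $H$ to $U_\nu(t,\tau)u_\tau$ for every fixed $t$.

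Having verified (i)--(iii), Theorem~\ref{Th1.exist} immediately produces the weak uniform attractor $\Cal A_{un}^w\subset H$, and Corollary~\ref{Cor1.rep} yields the representation $\Cal A_{un}^w=\cup_{\nu\in\Cal H(g)}\Cal K_\nu|_{t=0}$, where $\Cal K_\nu$ is the kernel of $U_\nu(t,\tau)$. That $\Cal K_\nu\subset C_b(\R,H)$ follows from \eqref{4.def-sol} applied on each bounded time interval together with the uniform bound $\sup_{t\in\R}\|u(t)\|_H<\infty$ inherited from membership in $\Cal K_\nu$.
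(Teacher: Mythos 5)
Your proposal is correct and follows essentially the same route as the paper: the paper likewise reduces the corollary to Theorem \ref{Th1.exist} and Corollary \ref{Cor1.rep}, observing that uniform dissipativity over the hull follows from Theorem \ref{Th4.dis} and that the weak joint continuity of $(u_\tau,\nu)\mapsto U_\nu(t,\tau)u_\tau$ is ``straightforward'' (with details deferred to Chepyzhov--Vishik). Your Aubin--Lions verification of that continuity is precisely the standard argument the paper leaves implicit.
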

The next result which gives the existence of a strong uniform attractor and the upper bounds for its entropy can be considered as a main result of this section.
\begin{theorem}\label{Th4.main-ent} Let the assumptions of Theorem \ref{Th4.dis} hold and let, in addition, the initial external force $g$ be weakly normal in $L^2_b(\R,V')$. Then the family of processes $\{U_\nu(t,\tau),\nu\in\Cal H(g)\}$ generated by equations \eqref{4.NS1} possess a strong uniform attractor $\Cal A_{un}=\Cal A^s_{un}=\Cal A_{un}^w$ in $H$. Moreover, its Kolmogorov's entropy possesses the following estimate:
\begin{equation}\label{4.ent-1}
\Bbb H_\eb(\Cal A_{un},H)\le D\log_2(\frac{2R_0}\eb)+\Bbb H_{\eb/K}(S_{\delta(\eb)}P_{N(\eb)}\Cal H(g), L^2([0,L\log_2(\frac{2R_0}\eb)], V')),
\end{equation}
where $D$, $R_0$, $K$ and $L$ are some constants and $\delta(\eb)$ and $N(\eb)$ are some functions of $\eb$ which can be explicitly found if $g$ is given. Here and below, $\Pr(\eb)$ is defined by \eqref{2.pr} and $P_N$ and $Q_N$ are orthoprojectors related with the orthonormal base in $H$ generated by the eigenvectors of the Stokes operator.
\end{theorem}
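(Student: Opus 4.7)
The plan is to invoke Theorem \ref{Th1.main-ent} with $E=H$, $W=V'$, and projections $\Pr(\eb)=S_{\delta(\eb)}\circ P_{N(\eb)}$, where $P_N$ is the $H$-orthoprojector onto the span of the first $N$ Stokes eigenvectors and $S_\delta$ is the time mollifier from \eqref{2.S}. The embedding $H\hookrightarrow V'$ is compact by Rellich applied to the Stokes operator; the uniform dissipativity \eqref{1.dis}, the weak continuity of $(u_\tau,\nu)\mapsto U_\nu(t,\tau)u_\tau$, and the representation formula \eqref{1.rep} are already delivered by Theorem \ref{Th4.dis} and Corollary \ref{Cor1.rep}. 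The image $\Pr(\eb)\Cal H(g)$ sits in a fixed finite-dimensional subspace $H_{N(\eb)}$ and is uniformly equicontinuous in time, hence precompact in $L^2_{loc}(\R,V')$. Since $\Cal H(g)\subset L^2_b(\R,V')$, Remark \ref{Rem1.NS} will allow me to replace the $M_b$-norm in \eqref{1.squeez} by an $L^2_b(V')$-norm, producing exactly the form \eqref{4.ent-1}. Everything thus reduces to establishing the squeezing estimate \eqref{1.squeez}.

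The core technical input is the Navier--Stokes analog of Lemma \ref{Lem2.key}: for every $\eb>0$ one can choose $N(\eb)$ and $\delta(\eb)$ so that the unique bounded-on-$\R$ solution $v$ of the linear Stokes problem $\Dt v+Av=(1-\Pr(\eb))\nu$ satisfies $\|v\|_{L^\infty_b(\R,H)}+\|v\|_{L^2_b(\R,V)}\le\eb$ uniformly in $\nu\in\Cal H(g)$. I would split $1-\Pr(\eb)=Q_N+(1-S_\delta)P_N$ and $v=v_1+v_2$ correspondingly. For $v_1$, the standard exponentially weighted Stokes estimate gives $\sup_t\|v_1(t)\|_H^2\le C\sup_t\int_{-\infty}^te^{-\alpha(t-s)}\|Q_N\nu(s)\|_{V'}^2\,ds$, which by the weak-normality bound \eqref{0.exp-w-norm1} of Remark \ref{Rem0.w-norm} (uniform on the hull) is $\le\eb^2/4$ once $N\ge N(\eb)$. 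For $v_2$, observe that $v_2=(1-S_\delta)\tilde v_2$, where $\tilde v_2$ is the bounded Stokes solution driven by $P_N\nu$; since this is a finite-dimensional ODE system, $\tilde v_2\in H^1_b(\R,H_{N(\eb)})$ with norm depending only on $N(\eb)$ and $\|g\|_{L^2_b(V')}$, and the explicit representation \eqref{2.sd} together with Cauchy--Schwarz then gives $\|v_2\|_{L^\infty(\R,H)}\le C(N)\delta^{1/2}$, made $\le\eb/2$ by fixing $\delta=\delta(\eb)$.

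To derive \eqref{1.squeez}, I would take two solutions $u_1,u_2$ with forcings $\nu_1,\nu_2\in\Cal H(g)$ starting in the absorbing set $\Cal B$, set $w=u_1-u_2$, $\bar\nu=\nu_1-\nu_2$, and subtract the small Stokes solution $v$ of the previous step corresponding to $(1-\Pr(\eb))\bar\nu$. The remainder $\tilde w=w-v$ then satisfies the perturbed linearized Navier--Stokes equation
\begin{equation*}
\Dt \tilde w+A\tilde w+(u_1\cdot\Nx)\tilde w+(\tilde w\cdot\Nx)u_2=\Pr(\eb)\bar\nu-(u_1\cdot\Nx)v-(v\cdot\Nx)u_2.
\end{equation*}
Testing against $A^{-1}\tilde w$ and using standard 2D trilinear estimates yields the $V'$-Lipschitz bound
\begin{equation*}
\|\tilde w(s)\|_{V'}\le Ce^{Ks}\(\|\tilde w(0)\|_{V'}+\eb+\|\Pr(\eb)\bar\nu\|_{L^2(0,s;V')}\),
\end{equation*}
which supplies the $W$-correction term in \eqref{1.squeez}. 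To extract the contraction in $H$, I will combine the 2D Ladyzhenskaya estimate on the trilinear term $((\tilde w\cdot\Nx)\tilde w,u_2)$ with the Gelfand-triple interpolation $\|\tilde w\|_H^2\le\|\tilde w\|_{V'}\|\Nx\tilde w\|_{L^2}$ and the 2D NS smoothing (which provides $\|\Nx u_i\|_{L^\infty}\le C$ on $\Cal B$ after unit time). This absorbs the dangerous factor $\|\tilde w\|_H^2\|\Nx u_2\|^2$ into a fraction of the dissipation $\|\Nx\tilde w\|^2$, moving the rest onto $\|\tilde w\|_{V'}^2$, and produces a differential inequality $\frac{d}{dt}\|\tilde w\|_H^2+\frac{\lambda_1}{2}\|\tilde w\|_H^2\le C\|\tilde w\|_{V'}^2+\text{(forcing)}$. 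Gronwall, combined with the $V'$-Lipschitz bound, then gives
\begin{equation*}
\|\tilde w(s)\|_H^2\le e^{-\lambda_1 s/2}\|\tilde w(0)\|_H^2+Ce^{Ks}\(\|\tilde w(0)\|_{V'}^2+\eb^2+\|\Pr(\eb)\bar\nu\|_{L^2(0,s;V')}^2\),
\end{equation*}
and choosing $s=s(\eb)$ large enough forces the first coefficient below $1/256$, so that taking square roots yields \eqref{1.squeez} with contraction $1/16$.

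The hard part is precisely this last step. Unlike the damped wave case, where the damping $\gamma\Dt u$ directly provides an exponential factor $e^{-\beta s}$ in the $H$-norm of the difference, the 2D Navier--Stokes linearization has no intrinsic damping: without the Gelfand-triple interpolation, the viscous dissipation $\|\Nx\tilde w\|^2$ would be entirely consumed by the Ladyzhenskaya control of the perturbation $(\tilde w\cdot\Nx)u_2$, leaving only an exponentially growing Gronwall bound. Retaining a genuinely coercive term $\frac{\lambda_1}{2}\|\tilde w\|_H^2$ on the left — and hence true exponential decay rather than mere Lipschitz with constant $e^{Ks}$ — is the delicate technical point; it depends crucially on the 2D smoothing to bound $\|\Nx u_i\|_{L^\infty}$ on $\Cal B$, and on fixing the parameters $N(\eb)$, $\delta(\eb)$, $s$ in the correct order so that the Lipschitz constant $e^{Ks}$ does not destroy the smallness of the remaining terms. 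Once \eqref{1.squeez} is secured, Theorem \ref{Th1.main-ent} in its $L^2_b$-variant (Remark \ref{Rem1.NS}) delivers both the existence of the strong uniform attractor $\Cal A_{un}^s=\Cal A_{un}^w$ and the entropy bound \eqref{4.ent-1}.
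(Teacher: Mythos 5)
Your overall framing (reduce to Theorem \ref{Th1.main-ent}, prove a Navier--Stokes analogue of Lemma \ref{Lem2.key} via the splitting $1-\Pr(\eb)=Q_N+(1-S_\delta)P_N$ and the weak-normality bound \eqref{0.exp-w-norm1}, then establish \eqref{1.squeez} for the difference $\tilde w=\bar u-v$) matches the paper, and your treatment of the linear auxiliary problem is essentially the paper's Lemma \ref{Lem4.key}. The gap is in the last step. Your contraction estimate
$\frac d{dt}\|\tilde w\|_H^2+\frac{\lambda_1}{2}\|\tilde w\|_H^2\le C\|\tilde w\|_{V'}^2+\dots$
rests on the claim that ``2D NS smoothing provides $\|\Nx u_i\|_{L^\infty}\le C$ on $\Cal B$ after unit time.'' This is not available here: the forcing is only assumed to lie in $L^2_b(\R,V')$, so the dissipative estimate \eqref{4.dis} gives $u_i\in L^\infty(H)\cap L^2_{loc}(V)$ and nothing more --- the pairing $(g,Au)$ needed for the usual $H\to V$ smoothing is not even defined for $g\in V'$. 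Without a uniform-in-time bound on $\|\Nx u_2(t)\|_{L^2}$, the Ladyzhenskaya control of $b(\tilde w,u_2,\tilde w)$ costs $C\|u_2\|_{L^4}^4\|\tilde w\|_H^2$ with $\|u_2\|_{L^4}^4$ only in $L^1_b$ in time (with an integral that generically exceeds $\lambda_1$), so Gronwall yields exponential growth, not decay; and your proposed interpolation $\|\tilde w\|_H^2\le\|\tilde w\|_{V'}\|\Nx\tilde w\|$ trades this for a coefficient $\|u_2\|_{L^4}^8$ which is not locally integrable. So the coercive term $\frac{\lambda_1}{2}\|\tilde w\|_H^2$ cannot be retained by this route.

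The paper avoids the need for any contraction altogether. It first derives the $H^{-\sigma}$-estimate ($\sigma<1/2$, not $\sigma=1$; the trilinear bound \eqref{4.in} requires $\sigma\le1/2$, so testing against $A^{-1}\tilde u$ as you propose is already borderline), which controls both $\|\tilde u(t)\|_{H^{-\sigma}}$ and $\int_t^{t+1}\|\tilde u\|^2_{H^{1-\sigma}}\,ds$ by $Ce^{Kt}\bigl(\|\tilde u(0)\|^2_{H^{-\sigma}}+\|\Pr(\eb)\bar\nu\|^2_{L^2_b(V')}+\eb\bigr)$. It then writes the $\sigma=0$ energy inequality, multiplies it by $t$, and uses the time-integrated bound $\int_0^t\|\tilde u\|_H^2\,ds$ from the previous step to close the Gronwall argument. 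The resulting estimate \eqref{4.smo} bounds $\|\tilde u(t)\|_H$ \emph{entirely} in terms of the compact norm $\|\tilde u(0)\|_{H^{-\sigma}}$ plus forcing terms, so the coefficient of $\|\xi_1-\xi_2\|_E$ in \eqref{1.squeez} is simply zero and no exponential decay is required. You should replace your contraction step by this parabolic smoothing argument (or otherwise route the full initial-data dependence through the compact norm $W=H^{-\sigma}$); as written, the decisive step of your proof does not go through under the hypotheses of the theorem.
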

As in the previous sections, we need the analogue of Lemma \ref{Lem2.key} in order to treat the non-translation compact part of the external forces $g$.
\begin{lemma}\label{Lem4.key} Let the function $g\in L^2_b(\R,V')$ be weakly normal. Then, for every $\eb>0$, there exist $\delta=\delta(\eb)$ and $N=N(\eb)$ such that, for every $\nu\in\Cal H(g)$
 and every $\tau\in\R$, the solution $v(t)$ of the linear problem
 \begin{equation}\label{4.lin}
 \Dt v+Av=(1-\Pr(\eb))\nu(t),\ \ v\big|_{t=\tau}=0
 \end{equation}
 satisfies the estimate
 \begin{equation}\label{4.lin-small}
 \|v(t)\|_{H}\le \eb,\ \ \ \|v\|_{L^2(t,t+1;V)}\le C,\ \ \ t\ge\tau,
 \end{equation}
 where the constant $C$ is independent of $\eb$.
\end{lemma}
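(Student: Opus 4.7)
The plan is to mimic, in the parabolic setting, the strategy used for Lemma \ref{Lem2.key} in the wave case. Start from the identity
\[ 1-\Pr(\eb)=(I-S_\delta)\circ P_N+Q_N, \]
and split $v=v_1+v_2$, where $v_1$ and $v_2$ solve \eqref{4.lin} with right-hand sides $(I-S_\delta)P_N\nu$ and $Q_N\nu$ respectively. I will handle each piece by a different mechanism: $v_2$ by weak normality of $g$ combined with parabolic smoothing, and $v_1$ by time mollification acting on a finite-dimensional (hence automatically regular) system.

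For the high-frequency piece $v_2$: since $Q_NH$ is invariant under the Stokes operator $A$, uniqueness forces $v_2=Q_Nv_2$, so Poincar\'e's inequality gives $\|v_2\|_V^2\ge\lambda_{N+1}\|v_2\|_H^2$. The standard Stokes energy estimate then yields
\[ \frac d{dt}\|v_2\|_H^2+\lambda_{N+1}\|v_2\|_H^2\le\|Q_N\nu\|_{V'}^2, \]
and Gronwall's lemma produces
\[ \|v_2(t)\|_H^2\le\int_\tau^t e^{-\lambda_{N+1}(t-s)}\|Q_N\nu(s)\|_{V'}^2\,ds. \]
Weak normality of $g$, via Remark \ref{Rem0.w-norm} applied uniformly over $\nu\in\Cal H(g)$, guarantees that for $N$ large enough the right-hand side is $\le\eb^2/4$. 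This fixes $N=N(\eb)$.

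For the low-frequency piece $v_1$: since $P_NH$ is a finite-dimensional $A$-invariant subspace, I expand $v_1=\sum_{n=1}^N v_{1,n}e_n$ and reduce to the scalar ODEs
\[ v_{1,n}'+\lambda_n v_{1,n}=(I-S_\delta)\nu_n,\quad v_{1,n}(\tau)=0,\quad \nu_n:=\langle\nu,e_n\rangle_{V',V}. \]
Let $\tilde v_{1,n}$ be the unique bounded solution on $\R$ of the same ODE with right-hand side $\nu_n$. Since the convolution operator $I-S_\delta$ commutes with $\partial_t+\lambda_n$, one has $v_{1,n}(t)=(I-S_\delta)\tilde v_{1,n}(t)-e^{-\lambda_n(t-\tau)}(I-S_\delta)\tilde v_{1,n}(\tau)$. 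The elementary identity
\[ (I-S_\delta)h(t)=\frac1\delta\int_t^{t+\delta}\bigl(h(t)-h(s)\bigr)\,ds \]
combined with H\"older's inequality yields $|(I-S_\delta)\tilde v_{1,n}(t)|\le\delta^{1/2}\|\tilde v_{1,n}'\|_{L^2(t,t+\delta)}$. Standard estimates for the bounded solution of the ODE give $\|\tilde v_{1,n}'\|_{L^2_b}\le C(\lambda_n)\|\nu_n\|_{L^2_b}\le C(N)\|g\|_{L^2_b(\R,V')}$ uniformly in $\nu\in\Cal H(g)$. Since $N=N(\eb)$ is already fixed, choosing $\delta=\delta(\eb)$ small enough forces $\|v_1(t)\|_H^2\le\eb^2/4$.

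Combining the two bounds gives the required $\|v(t)\|_H\le\eb$. The $L^2(t,t+1;V)$ estimate then follows from the standard Stokes energy inequality $\tfrac d{dt}\|v\|_H^2+\|v\|_V^2\le\|(I-\Pr(\eb))\nu\|_{V'}^2$ integrated over $[t,t+1]$, together with the observation that $I-\Pr(\eb)$ is uniformly bounded on $L^2_b(\R,V')$, because $P_N$ is a contraction on $V$ and hence on $V'$, while $S_\delta$ is a contraction on $L^2$. The only genuine subtlety, which is the main (mild) obstacle, is the order of quantifiers: $N(\eb)$ must be selected \emph{first} from weak normality, and $\delta(\eb)$ only \emph{afterwards} in terms of this now-fixed $N$ and of $\eb$, exactly as in the wave-equation analogue.
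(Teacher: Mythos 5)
Your proposal is correct and follows essentially the same route as the paper: the splitting $1-\Pr(\eb)=(1-S_\delta)\circ P_N+Q_N$, the energy/Gronwall estimate on the $Q_N$-component combined with the uniform weak-normality bound \eqref{0.exp-w-norm1} to fix $N(\eb)$ first, and then the reduction of the $P_N$-component to finitely many scalar ODEs where the mollification error is of order $\delta^{1/2}\|g\|_{L^2_b(\R,V')}$, fixing $\delta(\eb)$ afterwards. The only cosmetic difference is that you bound $(1-S_\delta)\tilde v_{1,n}$ via $\|\tilde v_{1,n}'\|_{L^2_b}$ directly rather than citing the wave-equation lemma, which is the same computation.
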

\begin{proof} We first note that without loss of generality, we may assume that $\tau=0$. Moreover,  since $g$ is weakly normal, due to \eqref{0.exp-w-norm1}, for every $\eb>0$, there are exist $N(\eb)$ and $L(\eb)$ such that
\begin{equation}\label{4.exp}
\int_{-\infty}^te^{-L(t-s)}\|Q_N\nu(s)\|^2_{V'}\,ds\le \eb,\  L\ge L(\eb), \ N\ge N(\eb),\ t\in\R,\ \ \nu\in\Cal H(g).
\end{equation}
Multiplying equation \eqref{4.lin} by $Q_Nv$ and using that $Q_N\Pr(\eb)=0$, after applying the Gronwall inequality, we get the standard estimate
$$
\|Q_Nv(t)\|_H^2\le \int_0^t e^{-\lambda_N(t-s)}\|Q_N\nu(s)\|^2_{H^{-1}}\,ds\le \int_{-\infty}^t e^{-\lambda_N(t-s)}\|Q_N\nu(s)\|^2_{H^{-1}}\,ds.
$$
Thus, using the monotonicity of the left-hand side of \eqref{4.exp} in $N$ and $L$,
 we may fix $N=N(\eb)$ in such a way that
 \begin{equation}
 \|Q_{N(\eb)}v(t)\|_{H}\le \frac\eb2,\ \ t\ge0.
 \end{equation}
 Therefore, according to \eqref{1.pre}, it only remains to estimate the $P_N$-component of the solution $v(t)$ of  \eqref{4.lin} where we may write
 $(1-S_\delta)P_N \nu$ in the right-hand side.  To this end, we note that the function $g$ is  weakly and (even strongly) non-atomic,
since by the H\"older inequality
$$
\|\int_t^{t+\delta}g(s)\,ds\|_{V'}\le\delta^{1/2}\|g\|_{L^2_b(\R,V')},\ \ t\in\R,\ \ \delta\le 1
$$
and, therefore,
$$
\|\int_t^{t+\delta}P_Ng(s)\,ds\|_{H}\le C_N\delta^{1/2}\|g\|_{L^2_b(\R,V')},\ \ t\in\R,\ \ \delta\le 1,
$$
where the constant $C_N$ is independent of $\delta$.
\par
Recall that $P_Nv(t)$ solves a system of $N$ first order ODEs for the Fourier components and $N=N(\eb)$ is already fixed, so, arguing exactly as in the proof of Lemma \ref{Lem2.key}, we may finally fix $\delta=\delta(\eb)$ in such a way that
$$
\|P_Nv(t)\|_{H}\le\frac\eb2
$$
and finish the proof of the first estimate of \eqref{4.lin-small}. The second one follows immediately from the fact that the $L^2_b(V')$-norm of $(1-\Pr(\eb))\nu$ is majored by the analogous norm of $\nu$ uniformly with respect to $\eb$. This finishes the proof of the lemma.
\end{proof}
We are now ready to finish the proof of the key theorem.

\begin{proof}[Proof of Theorem \ref{Th4.main-ent}] Indeed, let $u_1(t)$ and $u_2(t)$ be two solutions of \eqref{4.NS1} with the external forces $\nu_1,\nu_2\in\Cal H(g)$ starting at $t=0$ from the uniformly absorbing set $\Cal B$ (which exists due to the dissipative estimate \eqref{4.dis}). Then, the difference $\bar u(t):=u_1(t)-u_2(t)$ solves the linear equation
\begin{equation}\label{4.linNS}
\Dt\bar u+(u_1,\Nx)\bar u+(\bar u,\Nx)u_2+\Nx\bar p=\Dx\bar u +\bar \nu(t),\ \ \divv\bar u=0, \ \bar u\big|_{\partial\Omega}=0,
\end{equation}
where $\bar\nu(t)=\nu_1(t)-\nu_2(t)$. As before, we use Lemma \ref{Lem4.key} in order to get rid of the non-translation compact component of $\bar \nu(t)$ in this equation. Namely, $\eb>0$ be fixed and let $v(t)$ be a solution of equation \eqref{4.lin} with the right-hand side $(1-\Pr(\eb))\bar\nu(t)$. Then the function $\tilde u(t):=\bar u(t)-v(t)$ solves
\begin{equation}\label{4.NS-dif}
\Dt \tilde u+(u_1,\Nx) \tilde u+(\tilde u, \Nx)u_2+\Nx\tilde p=\Dx\tilde u-(u_1,\Nx)v-(v,\Nx)u_2+\Pr(\eb)\bar \nu.
\end{equation}
In order to estimate the nonlinear terms in this relation, we need the following estimates.
\begin{lemma}Let $b(u,v,w):=((u,\Nx v),w)$ be the three-linear form associated with the inertial term. Then, for every $u,v,w\in H^1$ and every $0\le\sigma\le1/2$, the following estimates hold:
\begin{equation}\label{4.in}
|b(u,v,w)|+|b(v,u,w)|\le C\|u\|_{L^4(\Omega)}\|v\|_{H^{-\sigma}}^{1/2}\|v\|_{H^{1-\sigma}}^{1/2}\|w\|_{H^{1+\sigma}},
\end{equation}
where the constant $C$ is independent of $u,v,w$.
\end{lemma}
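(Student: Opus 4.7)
The plan is to establish both inequalities simultaneously via a H\"older--Sobolev argument in which the derivative is first transferred onto $w$ and then absorbed by its $H^{1+\sigma}$ regularity, with the required fractional norms on $v$ produced by Sobolev interpolation at the end.

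First I would exploit the divergence-free structure implicit in the Navier--Stokes application: since $\divv u=\divv v=0$ in $\Omega$ with vanishing normal trace on $\partial\Omega$, integration by parts gives
\begin{equation*}
b(u,v,w)=-b(u,w,v),\qquad b(v,u,w)=-b(v,w,u).
\end{equation*}
Both right-hand sides share the common shape $-\int f_i\,g_j\,\partial_i w_j\,dx$ with $\{f,g\}=\{u,v\}$, so a single estimate handles both terms. Next, I would apply H\"older's inequality with the triple $\bigl(4,\ \tfrac{4}{1+2\sigma},\ \tfrac{2}{1-\sigma}\bigr)$, whose reciprocals sum to $1$, obtaining
\begin{equation*}
\Bigl|\int f\cdot g\cdot\Nx w\,dx\Bigr|\le\|f\|_{L^4}\,\|g\|_{L^{4/(1+2\sigma)}}\,\|\Nx w\|_{L^{2/(1-\sigma)}}.
\end{equation*}
The 2D Sobolev embeddings $H^{1/2-\sigma}(\Omega)\hookrightarrow L^{4/(1+2\sigma)}(\Omega)$ and $H^{\sigma}(\Omega)\hookrightarrow L^{2/(1-\sigma)}(\Omega)$ (valid with uniform constants for $\sigma\in[0,\tfrac12]$; the endpoints $\sigma=0,\tfrac12$ give the classical Ladyzhenskaya exponents $(4,4,2)$ and $(4,2,4)$) then upgrade this to
\begin{equation*}
|b(u,v,w)|+|b(v,u,w)|\le C\,\|u\|_{L^4}\,\|v\|_{H^{1/2-\sigma}}\,\|w\|_{H^{1+\sigma}}.
\end{equation*}

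Finally, I would invoke the interpolation inequality
\begin{equation*}
\|v\|_{H^{1/2-\sigma}}\le C\,\|v\|_{H^{-\sigma}}^{1/2}\,\|v\|_{H^{1-\sigma}}^{1/2},
\end{equation*}
which is immediate from the spectral definition $\|v\|_{H^s}^2=\sum_k\lambda_k^s|v_k|^2$ in the Stokes eigenbasis combined with Cauchy--Schwarz applied to the exponents $-\sigma$ and $1-\sigma$ with weights $\tfrac12$, using the midpoint identity $\tfrac12(-\sigma)+\tfrac12(1-\sigma)=\tfrac12-\sigma$. Substituting yields the bound claimed in the lemma. There is no real obstacle: this is a routine H\"older-plus-Sobolev-plus-interpolation argument, and the only point requiring minor care is ensuring the Sobolev embedding constants remain uniform across $\sigma\in[0,\tfrac12]$, which holds since the embedding exponents depend continuously on $\sigma$ throughout the closed interval.
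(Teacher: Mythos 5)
Your proposal is correct and follows essentially the same route as the paper's proof: integration by parts via the divergence-free condition to move the derivative onto $w$, H\"older with the exponent triple $\bigl(4,\tfrac{4}{1+2\sigma},\tfrac{2}{1-\sigma}\bigr)$, the 2D Sobolev embeddings $H^{1/2-\sigma}\hookrightarrow L^{4/(1+2\sigma)}$ and $\|\Nx w\|_{L^{2/(1-\sigma)}}\le C\|w\|_{H^{1+\sigma}}$, and finally the interpolation $\|v\|_{H^{1/2-\sigma}}\le C\|v\|_{H^{-\sigma}}^{1/2}\|v\|_{H^{1-\sigma}}^{1/2}$. The only difference is cosmetic: the paper writes $(u,\Nx)v=\sum_i\partial_{x_i}(u_iv)$ before integrating by parts, whereas you state the antisymmetry $b(u,v,w)=-b(u,w,v)$ directly.
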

\begin{proof}[Proof of the Lemma] We prove the inequality for $b(u,v,w)$ only (for the term $b(v,u,w)$ the proof is completely analogous). To this end, we  rewrite:
$$
(u,\Nx)v=\sum_{i=1}^2u_i\partial_{x_i}v=
\sum_{i=1}^2\partial_{x_i}(u_iv)-v\divv u=\sum_{i=1}^2\partial_{x_i} (u_iv)
$$
and therefore, after the integration by parts, we have
$$
b(u,v,w) =-\sum_{i=1}^2(u_i,v.\partial_{x_i}w).
$$
Then, the H\"older inequality gives
$$
|b(u,v,w)|\le \|u\|_{L^4(\Omega)}\|v\|_{L^p(\Omega)}\|\Nx w\|_{L^q(\Omega)},\ \ \frac1p=\frac14+\frac\sigma2,\ \ \frac1q=\frac12-\frac\sigma2.
$$
Using the Sobolev embedding together with the description of fractional powers of the Stokes operator, we have
$$
\|\Nx w\|_{L^q}\le C\|w\|_{H^{1+\sigma}}.
$$
Analogously, using the Sobolev embedding together with the interpolation, we end up with
$$
\|v\|_{L^p}\le C\|v\|_{H^{1/2-\sigma}}\le C\|v\|_{H^{-\sigma}}^{1/2}\|v\|_{H^{1-\sigma}}^{1/2}
$$
and finish the proof of the lemma.
\end{proof}
We now return to equation \eqref{4.NS-dif}. We first derive the estimate in the $H^{-\sigma}$-norm. To this end we multiply the equation by $A^{-\sigma}\tilde u$ for some $\sigma<1/2$ and get
\begin{multline}\label{4.gron-4}
\frac12\frac d{dt}\|\tilde u\|_{H^{-\sigma}}^2+\|\tilde u\|^2_{H^{1-\sigma}}+b(u_1,\tilde u,A^{-\sigma}\tilde u)+b(\tilde u,u_2,A^{-\sigma}\tilde u)=\\=(\Pr(\eb)\bar \nu,A^{-\sigma}\tilde u)-b(u_1,v,A^{-\sigma}\tilde u)-b(v,u_2,A^{-\sigma}\tilde u).
\end{multline}
Using now estimate \eqref{4.in} together with the fact $\|A^{-\sigma}\tilde u\|_{H^{1+\sigma}}=\|\tilde u\|_{H^{1-\sigma}}$ and the dissipative estimate for $u_1$ and $u_2$, we arrive at
\begin{multline}\label{4.gron-5}
\frac d{dt}\|\tilde u(t)\|^2_{H^{-\sigma}}+
\|\tilde u(t)\|^2_{H^{1-\sigma}}-C\(\|u_1(t)\|_{L^4}^4+\|u_2(t)\|^4_{L^4}\)\|\tilde u(t)\|^2_{H^{-\sigma}}\le\\\le C\|\Pr(\eb)\bar \nu(t)\|_{V'}^2+C\|v(t)\|_{L^4}^2\(\|u_1(t)\|_{H^1}+\|u_2(t)\|_{H^1}\),
\end{multline}
where the constant $C$ is independent of $\eb$, $u_1$, $u_2$, $\nu_1$, $\nu_2$ and $t$. We estimate the $L^4$-norms using the standard interpolation inequality
$$
\|u\|_{L^4(L^4)}^4\le C\|u\|_{L^\infty(L^2)}^2\|u\|_{L^2(H^1)}^2.
$$
Then, applying the Gronwall inequality to \eqref{4.gron-5} and using the dissipative estimate \eqref{4.dis} for $u_1$ and $u_2$ together with estimate \eqref{4.lin-small} for the function $v$, we arrive at
\begin{equation}\label{4.negative}
\|\tilde u(t)\|_{H^{-\sigma}}^2+\int_t^{t+1}\|\tilde u(s)\|^2_{H^{1-\sigma}}\,ds\le Ce^{Kt}\(\|\tilde u(0)\|^2_{H^{-\sigma}}+\|\Pr(\eb)\bar \nu\|_{L_b^2(0,t;V')}^2+\eb\),
\end{equation}
where $C$ and $K$ are independent of $u_1$, $u_2$, $\nu_1$, $\nu_2$ and $t$.
\par
At the final step, we write out estimate \eqref{4.gron-5} with $\sigma=0$ and multiply it on $t$ in order to get
\begin{multline}\label{4.gron-6}
\frac d{dt}(t\|\tilde u(t)\|^2_{H})
-C\(\|u_1(t)\|_{L^4}^4+\|u_2(t)\|^4_{L^4}\)(t\|\tilde u(t)\|^2_{H})\le\\\le Ct\|\Pr(\eb)\bar \nu(t)\|_{V'}^2+Ct\|v(t)\|_{L^4}^2\(\|u_1(t)\|_{H^1}+\|u_2(t)\|_{H^1}\)+\|\tilde u(t)\|_H^{2},
\end{multline}
apply the Gronwall inequality again and use \eqref{4.negative} in order to estimate the last term in the RHS:
$$
\int_0^t\|\tilde u(s)\|^2_H\,ds\le C(t+1)\int_t^{t+1}\|\tilde u(s)\|^2_{H^{1-\sigma}}\,ds.
$$
This gives
\begin{equation}\label{4.smo}
\|\tilde u(t)\|_{H}^2 \le C\frac{t+1}t e^{Kt}\(\|\tilde u(0)\|^2_{H^{-\sigma}}+\|\Pr(\eb)\bar \nu\|_{L^2(0,t;V')}^2+\eb\).
\end{equation}
Up to scaling $\eb\to\eb^2$ in the definition of $\Pr(\eb)$, the last estimates and together with estimates \eqref{4.lin-small} gives us the desired estimate \eqref{1.squeez} (where the space $M_b$ is replaced by $L^2_b$, see Remark \ref{Rem1.NS}). Thus, the assumptions of the abstract Theorem \ref{Th1.main-ent} are verified and this finishes the proof of Theorem \ref{Th4.main-ent}.
\end{proof}

\section{Examples and concluding remarks}\label{s7}

In this concluding section, we illustrate the obtained results on two concrete examples of non-translation compact external forces which lead to finite-dimensional uniform attractors and discuss some related things and open problems.

\begin{example}\label{Ex6.1} Let $e_1=e_1(x)$ be the first eigenvector of the Dirichlet-Laplacian in $L^2(\Omega)$ and
\begin{equation}\label{6.time}
\mu(t):=\(2\cos(t^2)-\frac{\sin(t^2)}{t^2}\)e_1.
\end{equation}
This external force  obviously belongs to the space $M_b^{una}(\R,H)\cap M_b^{s-reg}(\R,H)$ (since it is finite-dimensional and belongs to $L^\infty(\R,H)$), but does not belong to $M_b^{t-reg}(\R,H)$ (since it is not uniformly continuous in time). Thus, $\mu$ is not translation-compact and the classical theory is not applicable for estimating the entropy of the corresponding uniform attractor.  However, $\mu$ is weakly-regular, so the theory developed above works and we may use Theorem \ref{Th2.main-ent} and estimate \eqref{2.ent-1} in order to estimate the Kolmogorov's entropy of the uniform attractor $\Cal A_{un}$ of damped wave equation \eqref{2.wave} with such an external force.
\par
To this end, we first need to specify $\delta=\delta(\eb)$ in the definition of the projector operator $\Pr(\eb)=S_{\delta(\eb)}\circ P_{N(\eb)}$. In particular, since $\mu$ is finite-dimensional in space (actually one-dimensional), we need not to use the projector $P_{N(\eb)}$ and $\Pr(\eb)=S_{\delta(\eb)}$. We recall that this $\delta$ is actually specified in Lemma \ref{Lem2.key}, so we only need to choose $\delta=\delta(\eb)$ in such a way that the solution of the ODE
$$
v''(t)+\gamma v'(t)+\lambda_1 v(t)+v(t)=(1-S_\delta)\mu_1(t),\ \ \mu_1:=(\mu,e_1)
$$
will be of order $\eb$ in the standard energy norm. Furthermore, according to \eqref{2.mest}, we see that this norm is controlled by
$$
C\delta+\sup_{|t-s|\le\delta}\big|\int_s^t\mu_1(s)\,ds\big|\le C_1\delta
$$
since $\mu\in L^\infty(\R,H)$. Thus, we may take
\begin{equation}\label{6.delta}
\delta(\eb)=a\eb,
\end{equation}
where the positive constant $a$ is independent of $\eb$.
\par
At the next step, we find explicitly the hull $\Cal H(\mu)$. To this end, it is enough to note that $T(h)\mu\to 0$ as $h\to\pm\infty$ weakly-star in $L^\infty_{loc}(\R,H)$ and, therefore,
$$
\Cal H(\mu)=\{T(h)\mu,\ h\in\R\}\cup\{0\}.
$$
Indeed, for every $\psi\in C_0^\infty(0,1)$, we have
\begin{multline*}
|\int_0^{1}(T(h)\mu(t),\psi(t))dt|=\int_0^{1}(\frac{\sin((t+h)^{2})}{t+h})'\psi(t)dt=\\=
|-\int_0^{1}(\frac{\sin((t+h)^{2})}{t+h})\psi'(t)dt|\le  C\|\frac{\sin((t+h)^{2})}{t+h})\|_{L^{\infty}(0,1)}\rightarrow0 \ \text{as} \ \ h\rightarrow\pm\infty.
\end{multline*}
Thus,
$$
S_\delta\Cal H(\mu)=\left\{e_1(x)\delta^{-1}
\(\frac{\sin((t+h+\delta)^2)}{t+h+\delta}-\frac{\sin((t+h)^2)}{t+h}\),\ \ h\in\R\right\}\cup\{0\}
$$
and we only need to estimate the entropy of $S_\delta\Cal H(\mu)\big|_{t\in[0,m]}$. To do so, we note that the function $|T(h)f_\delta(t)|$ is less than $\eb/2$ in $L^\infty(\R)$-norm
for all $t\in[0,m]$ if
$$
h\notin[-m-4\delta^{-1}\eb^{-1},4\delta^{-1}\eb^{-1}],\ \ f_\delta(t):=\delta^{-1}\(\frac{\sin((t+\delta)^2)}{t+\delta}-\frac{\sin(t^2)}{t}\).
$$
 Indeed,
$$
|T(h)f_\delta(t)|=\delta^{-1}\big|\frac{\sin((t+h+\delta)^2)}{t+h+\delta}-\frac{\sin((t+h)^2)}
{t+h}\big|\le\delta^{-1}(\frac1{|t+h+\delta|}+\frac1{|t+h|})\le\frac{2}{\delta|t+h|}<\frac{\eb}{2}
$$
and we need
$$
|t+h|>\frac{4}{\delta\eb}
$$
for $t\in[0,m]$, so it is enough to take
$$
h>4\delta^{-1}\eb^{-1} \ \ \text{or}\ \ h<-m-4\delta^{-1}\eb^{-1}.
$$
Thus, it is enough to find the entropy of a curve $h\to T(h)f_\delta$, $h\in   [-m-4\delta^{-1}\eb^{-1},4\delta^{-1}\eb^{-1}]$ in the space $L^\infty(0,m)$. Note that $f'_\delta(t)$ is uniformly bounded in $L^\infty$ with respect to $\delta\to0$:
$$
|f'_\delta(t)|=\bigg|\delta^{-1}[2\cos((t+\delta)^2)-
\frac{\sin((t+\delta)^2)}{(t+\delta)^2}-2\cos(t^2)+\frac{\sin(t^2)}{t^2}]\bigg|\le 6\delta^{-1}.
$$
Then this curve is Lipschitz continuous with the Lipschitz constant $6\delta^{-1}$. This gives the desired estimate.
$$
\Bbb H_{\eb}(S_\delta\Cal H(\mu)\big|_{[0,m]},M_b([0,m],H))\le \ln\(6\delta^{-1}{\eb^{-1}}(m+8\delta^{-1}\eb^{-1})\).
$$
Inserting $\delta=a\eb$ and $m=K\log_2\eb^{-1}$, we get
$$
\Bbb H_{\eb/K}(S_{\delta(\eb)}\Cal H(\mu)\big|_{[0,L\log_2(\frac{2R_0}\eb)]}, M_b([0,L\log_2(\frac{2R_0}\eb)],H))\le 4\log_2(\frac {2R_0}\eb)+C.
$$
Finally, Theorem \ref{Th2.main-ent} gives us the estimate
\begin{equation}
\Bbb H_{\eb}(\Cal A_{un},E)\le (D+4)\log_2(\frac {2R_0}\eb)+C.
\end{equation}
Thus, we have proved that the uniform attractor $\Cal A_{un}$ of wave equation \eqref{2.wave} with the right-hand side $\mu$ defined by \eqref{6.time} possesses a uniform attractor with finite fractal dimension.
\par
We also note that this result is not restricted to the subcritical wave equation only. Similar estimate also holds in the critical case with periodic boundary conditions if we replace $e_1(x)$ by $1$. Moreover, if we replace $e_1$ by the first eigenvector of the Stokes operator, the corresponding uniform attractor of Navier-Stokes system \eqref{4.NS} will be also finite-dimensional. The proof of these facts is completely analogous, so we leave it to the reader.
\end{example}
\begin{example}\label{Ex6.2} We now consider time-regular analogue of \eqref{6.time}. Let $\{e_n\}_{n=1}^\infty$ be the orthonormal in $H=L^2(\Omega)$ base of eigenvectors of the Dirichlet-Laplacian and let
\begin{equation}\label{6.space}
\mu(t)=(t-n+1)e_n+(n-t)e_{n+1},\ \ t\in[n-1,n],\ n\in\Bbb N,\ \ \mu(-t)=\mu(t).
\end{equation}
Then, $\mu\in W^{1,\infty}(\R,H)$ with the norm $\le 2$ and therefore it is time-regular, however, it is obviously not space-regular and therefore is not translation compact.
\par
 Let us apply Theorem \ref{Th2.main-ent} to the wave equation \eqref{2.wave} with the right-hand side \eqref{6.space}. To this end, as in the previous example, we first need to specify the projector $\Pr(\eb)=P_{N(\eb)}$, i.e. to specify the function $N(\eb)$ (since $\mu$ is time-regular, we need not the projector $S_{\delta}$). To this end, we return to Lemma \ref{Lem2.key} and its proof again. Since $\mu\in W^{1,\infty}(\R,H)$, we have $\nu=h_{time}$ and the solution of
 $$
 \Dt^2 \hat v_2+\gamma\Dt\hat v_2-\Dx\hat v_2+\hat v_2=Q_N\nu
 $$
 satisfies $\|\xi_{\hat v_2}(t)\|_{E^1}\le C\|\nu\|_{W^{1,\infty}(\R,H)}\le C_1$.
 \par
 Using now the Weyl asymptotic $\lambda_n\sim Cn^{2/3}$ we infer that
 \begin{equation}
 \|Q_N\xi_{\hat v_2}(t)\|_{E}+\|Q_N\hat v_2(t)\|_{L^{12}}\le C\lambda_N^{-3/8}
 \|\xi_{\hat v_2}(t)\|_{E^{1}}\le CN^{-1/4}.
\end{equation}
This shows that
$$
N(\eb)\sim a\eb^{-4}.
$$
We also note that, as in the previous example, $T(h)\mu\to0$ as $h\to\pm\infty$ weakly-star  in $M_{loc}(\R,H)$, therefore
$$
\Cal H(\mu)=\{T(h)\mu, \ h\in\R\}\cup\{0\}.
$$
It remains to find the entropy of the restrictions $P_{N(\eb)}\Cal H(\mu)\big|_{t\in[0,m]}$ . To do this we note that, by the definition of $\mu$,
$$
P_{N(\eb)}\mu(t)\equiv 0 \text{ if } \ t\notin [-N(\eb)-1,N(\eb)+1].
$$
Thus, we only need to estimate the entropy of the curve $h\to T(h)\mu$, $h\in[-N(\eb)-1,N(\eb)+1+m]$. Since this curve is Lipschitz continuous with Lipschitz constant $2$,
$$
\Bbb H_\eb(P_{N(\eb)}\Cal H(\mu)\big|_{[0,m]},M_b([0,m],H))\le \log_2\(\eb^{-1}(2N(\eb)+2+m)\)
$$
and
$$
\Bbb H_{\eb/K}(P_{N(\eb)}\Cal H(\mu)\big|_{[0,L\log_2(\frac{2R_0}\eb)]},M_b([0,L\log_2(\frac{2R_0}\eb)],H))\le 6\log_2(\frac{2R_0}\eb)+C
$$
and formula \eqref{2.ent-1} gives us
$$
\Bbb H_\eb(\Cal A_{un},H)\le (D+6)\log_2(\frac{2R_0}\eb)+C.
$$
Thus, in the case of time-regular external forces $\mu$ given by \eqref{6.space}, the uniform attractor $\Cal A_{un}$ also has finite fractal dimension.
\end{example}
\begin{remark} The structure of the classes of external forces introduced above deserves further attention. Indeed, as shown in  Proposition \ref{Prop0.str}, the space of weakly regular measures is a closure of the sum of two Banach spaces (the spaces of time and space regular measures respectively, see \eqref{0.main}). It is well-known that an algebraic sum of two closed subspaces is not necessarily closed even in the case of Hilbert spaces, but it would be interesting to give a concrete example of a weakly regular measure which is not a sum of space and time regular measures or to prove that this is impossible.
\par
Similar questions also arise when studying the normal and weakly normal external forces. For instance, is it true that
$$
L^{p,w-norm}_b(\R,H)=[L^{p,norm}_b(\R,H)+ L^{p,s-reg}_b(\R,H)]_{L^p_b(\R,H)}?
$$
or even without the closure? In contrast to the proof of Proposition \ref{Prop0.str}, we unfortunately do not know how to present an "$\eb$-normal" function in a sum of small and normal ones (i.e., what is the analogue of the mollifying operator used there on the level of normal functions).
\par
Such structural theorems for various  classes of external forces have not only theoretical interest, but may also help to invent new more effective ways to verify the compactness of uniform attractors and estimating their entropy.
\end{remark}
\begin{remark} We conclude this section by recalling that, in all results concerning weakly damped wave equations stated above, we require the external measure $\mu$ to be weakly non-atomic ($\mu\in M^{una}_b(\R,H)$). As shown in \cite{SZUMN}, this is in a fact necessary and sufficient condition for the associated extended semigroup $\Bbb S_t$ to be continuous in $\Bbb E^w$. This continuity is in turn used in order to verify the representation formula
\begin{equation}\label{6.str}
\Cal A_{un}=\cup_{\nu\in\Cal H(\mu)}\Cal K_\nu\big|_{t=0}
\end{equation}
and this formula is crucial for our method of estimating the Kolmogorov $\eb$-entropy of the uniform attractor as was already pointed out above. The counterexamples which show that \eqref{6.str} may fail without the assumption on the external measure to be weakly non-atomic are also presented in \cite{SZUMN}. On the other hand, as shown there, the uniform attractor $\Cal A_{un}$ still exists in a strong topology of $E$ even without \eqref{6.str} if the external measure belongs to $M_b(\R,H^1_0(\Omega))$, so we expect this result to be true, e.g. for space-regular external forces.
\par
Thus, an interesting and important open problem arises here: how to estimate the Kolmogorov $\eb$-entropy of uniform attractors which do not possess a representation formula \eqref{6.str}?  Up to the moment, it is not clear how to do this even in the case when $\mu$ is translation-compact in $M_b(\R,H)$.
\end{remark}


\begin{thebibliography}{11}
\bibitem{1}  C. Anh and N. Quang, {\it Uniform attractors for nonautonomous parabolic equations involving
weighted p-Laplacian operators}, Ann. Polon. Math., \textbf{98}(3) (2010), 251--271.
\bibitem{BV} A. Babin and M. Vishik, {\it Attractors of Evolution Equations}, North Holland, Amsterdam, 1992.
 \bibitem{BG}  H. Bahouri and P. G\'erard, {\it High frequency approximation of solutions to critical nonlinear wave equations}, Amer. J. Math. \textbf{121}(1) (1999), 131--175.
\bibitem{BSS} M. Blair, H. Smith, and C. Sogge, {\it Strichartz estimates for the wave equation on manifolds with boundary}, Ann. Inst. H. Poincar\'{e} Anal. Non Lin\'{e}aire, \textbf{26}(5) (2009), 1817--1829.
 \bibitem{BLP} N. Burq, G. Lebeau, and F. Planchon, {\it Global existence for energy critical waves
in $3$-D domains}, J. Amer. Math. Soc. \textbf{21}(3) (2008), 831--845.
\bibitem{4} A. Carvalho, J. Langa, and J. Robinson, {\it Attractors for Infinite-Dimensional Non-Autonomous
Dynamical Systems}, Applied Mathematical Sciences, 182, Springer, New York, 2013.
\bibitem{CV} V. Chepyzhov and M. Vishik, {\it Attractors for Equations of Mathematical Physics}, American Mathematical Society Colloquium Publications, 49, American Mathematical Society, Providence, RI, 2002.
\bibitem{5} V. Chepyzhov, {\it On uniform attractors of dynamic processes and nonautonomous equations of
mathematical physics, Russian Math. Surveys}, \textbf{68}(2) (2013), 349--382.
\bibitem{7} V. Chepyzhov and M. Vishik, {\it Attractors of nonautonomous dynamical systems and their dimension}, J. Math. Pures Appl., (9) \textbf{73}(3) (1994), 279--333.
\bibitem{8} V. Chepyzhov and M. Vishik, {\it Attractors of non-autonomous evolution equations with
translation-compact symbols}, in Partial Differential Operators and Mathematical Physics
(Holzhau, 1994), Oper. Theory Adv. Appl., 78, Birkh\"{a}user, Basel, 1995, 49--60.
\bibitem{9} H. Crauel and F. Flandoli, {\it Attractors for random dynamical systems}, Probab. Theory Related
Fields, \textbf{100}(3) (1994), 365--393.
\bibitem{EMZPRSE} M. Efendiev, S. Zelik, and A. Miranville, {\it Exponential attractors and finite-dimensional reduction
for non-autonomous dynamical systems}, Proc. Roy. Soc. Edinburgh Sect. A, \textbf{135}(4) (2005),
703--730.
\bibitem{11} V. Kalantarov, A. Savostianov, and S. Zelik, {\it Attractors for damped quintic wave equations in
bounded domains}, Ann. Henri Poincar\'e, \textbf{17}(9) (2016), 2555--2584.
\bibitem{KT}  A. Kolmogorov and V. Tikhomirov, {\it $\eb$-entropy and~$\eb$-capacity of sets in functional spaces}, In: Selected Works of A. N. Kolmogorov, vol. III, ed., Dordrecht: Kluver, 1993.
\bibitem{13} S. Lu, {\it Attractors for nonautonomous $2D$ Navier-Stokes equations with less regular normal
forces}, J. Differential Equations, \textbf{230}(1) (2006), 196--212.
\bibitem{LWZ} S. Lu, H. Wu, and C. Zhong, {\it Attractors for nonautonomous $2$D Navier-Stokes equations with
normal external forces}, Discrete Contin. Dyn. Syst., \textbf{13}(3) (2005), 701--719.
\bibitem{14}
S. Lu, {\it Attractors for nonautonomous reaction-diffusion systems with symbols without strong
translation compactness}, Asymptot. Anal., \textbf{54}(3-4) (2007), 197--210.
\bibitem{15}
S. Ma, C. Zhong, and H. Song, {\it Attractors for nonautonomous $2D$ Navier-Stokes equations
with less regular symbols}, Nonlinear Anal., \textbf{71}(9) (2009), 4215--4222.
\bibitem{16} S. Ma, X. Cheng, and H. Li, {\it Attractors for non-autonomous wave equations with a new class
of external forces}, J. Math. Anal. Appl., \textbf{337}(2) (2008), 808--820.
\bibitem{17} S. Ma and C. Zhong, {\it The attractors for weakly damped non-autonomous hyperbolic equations
with a new class of external forces}, Discrete Contin. Dyn. Syst., \textbf{18}(1) (2007), 53--70.
 \bibitem{MSSZ} X. Mei, A. Savostianov, C. Sun, and S. Zelik, {\it Infinite energy solutions for weakly damped quintic
wave equations in $\R^3$}, Trans. Amer. Math. Soc., \textbf{374}(5) (2021), 3093--3129.
\bibitem{MirZel}  A. Miranville and S. Zelik, {\it Attractors for Dissipative Partial Differential Equations in Bounded and Unbounded Domains}. Handbook of differential equations: evolutionary equations, Vol. IV, 103--200, Handb. Differ. Equ., Elsevier/North-Holland, Amsterdam, 2008.
\bibitem{Pata} V. Pata and S. Zelik, {\it A result on the existence of global attractors for semigroups of closed operators},
Commun. Pure Appl. Anal. \textbf{6}(2) (2007),  481--486.
\bibitem{22} A. Robertson and W. Robertson, {\it Topological Vector Spaces}, Reprint of the second edition,
Cambridge Tracts in Mathematics, 53, Cambridge University Press, Cambridge-New York,
1980.
\bibitem{SZUMN} A. Savostianov and S.  Zelik, {\it Uniform attractors for measure-driven quintic wave
equation}, (Russian) Uspekhi Mat. Nauk, \textbf{75}(2) (2020), 61--132.
\bibitem{60} J. Shatah and M. Struwe, {\it Well-posedness in the energy space for semilinear wave
equations with critical growth}, Internat. Math. Res. Notices, \textbf{1994}(7) (1994), 303--309.
\bibitem{61} J. Shatah and M. Struwe, {\it Regularity results for nonlinear wave equations}, Ann. of
Math., (2) \textbf{138}(3) (1993), 503--518.
\bibitem{Tao} T. Tao, {\it Spacetime bounds for the energy-critical nonlinear wave equation in three spatial dimensions}, Dyn. Partial Differ. Equ., \textbf{3}(2) (2006), 93--110.
\bibitem{Temam} R. Temam, {\it Infinite-Dimensional Dynamical Systems in Mechanics and Physics}, Second edition, Applied Mathematical Sciences, 68, Springer-Verlag, New York, 1997.

\bibitem {XS} Y. Xiong and C. Sun, {\it Kolmogorov $\eb$-entropy of the uniform attractor for a wave equation}, submitted.
\bibitem{ZelNach} S. Zelik, {\it The attractor for a nonlinear reaction-diffusion system in the unbounded domain and Kolmogorov's $\eb$-entropy}, Math. Nachr., \textbf{232}(1) (2001),  129--179.
\bibitem{ZeDCDS} S.  Zelik, {\it The attractor for a nonlinear hyperbolic equation in the unbounded domain}, Discrete Contin. Dyn. Syst., \textbf{7}(3) (2001), 593-641.
\bibitem{ZelDCDS} S.  Zelik, {\it Strong uniform attractors for non-autonomous dissipative PDEs with non translation-compact external forces}, Discrete Contin. Dyn. Syst. Ser. B, \textbf{20}(3) (2015) 781-810.


\end{thebibliography}
\end{document}